\newtheorem{Theorem}{Theorem}[section]
\newtheorem{proposition}[Theorem]{Proposition}
\newtheorem{lemma}[Theorem]{Lemma}
\theoremstyle{definition}
\newtheorem{definition}[Theorem]{Definition}
\newtheorem{Question}[Theorem]{Question}
\newtheorem{example}[Theorem]{Example}
\theoremstyle{remark}
\newtheorem{Remark}[Theorem]{Remark}
\newcommand{\Sh} {{\rm Sh}}
\newcommand{\Typ} {{\rm Type}}
\newcommand{\Red} {{\rm Red}}
\newcommand{\Inv} {{\rm Inv}}
\newcommand{\Tab} {{\rm Tab}}
\newcommand{\A} {\mathcal{A}}
\newcommand{\B} {\mathcal{B}}
\newcommand{\T}{\mathcal{T}}
\newcommand{\SSF}{{\rm SSF}}
\newcommand{\Bal}{{\rm Bal}}
\title{A natural generalization of balanced tableaux}
\author{Fran\c{c}ois Viard}
\date{\today}
\begin{document}
\maketitle

\begin{abstract}
We introduce the notion of ``type'' of a tableau, that allows us to define new families of tableaux including both balanced and standard Young tableaux. We use these new objects to describe the set of reduced decompositions of any permutation. We then generalize the work of Fomin \emph{et al.} by giving, among other things, a new proof of the fact that balanced and standard tableaux are equinumerous, and by exhibiting many new families of tableaux having similar combinatorial properties to those of balanced tableaux.
\end{abstract}

\section{Introduction}

Since the fundamental paper of Stanley \cite{S1}, it is well-known that standard Young tableaux interact with the weak order, providing and important tool to enumerate the reduced expressions of any permutation (see also \cite{EG,FGR,LS,Little}). Another important family of tableaux called \emph{balanced tableaux} has been defined in \cite{EG} by Edelman and Greene to study this enumerative problem. The surprising feature of balanced tableaux is that they are equinumerous with standard tableaux of the same shape $\lambda$. This was showed in the original paper \cite{EG} with a quite involved proof.
The notion of balanced tableaux was further generalized in \cite{FGR}, with the introduction of the set of balanced tableaux of shape $D(\sigma)$, where $D(\sigma)$ denotes the \emph{Rothe diagram} of the permutation $\sigma$. By using this new family, the authors gave in \cite{FGR} a new interpretation of the set of reduced expressions of any permutation, and derived from this a new and more accessible proof of the fact that balanced and standard tableaux are equinumerous.

In this article, we give a wide generalization of the previous result using the results from \cite{FV}. Namely, we give a new interpretation of the set of reduced expressions of a permutation from which we define many families of tableaux having similar combinatorial properties as those of balanced tableaux. As a corollary of our method, we provide an alternative proof that balanced and standard tableaux are equinumerous. 

In order to do so, we deal with a bigger class of tableaux which are not required to be standard or balanced. To each tableau $T$ of shape $S$, where $S$ is any finite subset of $\mathbb{N}\times \mathbb{N}$, we associate a combinatorial object $\T$ called the \emph{type} of $T$. This allows us to split the set of tableaux of shape $S$ into different classes: two tableaux being in the same class if and only if they have same type. Let us denote by $\Tab(\T)$ the set of all the tableaux having type $\T$. In particular, both sets of standard and balanced tableaux are special instances of this classification. We also provide an algorithmic process allowing us to construct all tableaux having a given type, and in particular this allows us to easily obtain all balanced tableaux of a given shape.

In Section~\ref{SubSecTypPerm}, we associate each permutation $\sigma \in S_n$ with a type $\T_{\sigma}$ such that we have a one-to-one correspondence between $\Tab(\T_{\sigma})$ and $\Red(\sigma)$, where $\Red(\sigma)$ is the set of reduced expressions of $\sigma$. This gives a new combinatorial interpretation of reduced expressions of any permutation (not fundamentally different from the one given in \cite{FGR}, but maybe more natural with respect to the results in \cite{FV}). Then, we focus on the case of vexillary permutations, namely permutation being $2143$-avoiding, which are one of the main objects studied in \cite{FGR}. It is well-known \cite{S1} that each vexillary permutation $\sigma$ is associated with a partition $\lambda(\sigma)$ such that $|\Red(\sigma)|$ equals the number of standard tableau of shape $\lambda(\sigma)$. We then introduce a transformation on types called the \emph{exchange algorithm} (see Section~\ref{SectionExchange}), and we use it to prove the following theorem. \\

\noindent \textbf{Theorem \ref{TheoVexiType}.} \emph{Let $\sigma \in S_n$ be a vexillary permutation and $\T_{\sigma}^E$ be the type obtained by performing the exchange algorithm on $\T_{\sigma}$. Then, $|\Tab(\T_{\sigma}^E)|$ equals the number of balanced tableaux of shape $\lambda(\sigma)$, and each element of $\Tab(\T_{\sigma}^E)$ is of shape $\lambda(\sigma)$.} \\

This result provides a generalization of Edelman and Greene result. Indeed, we have the following two facts:
\begin{itemize}
\item from Theorem~\ref{TheoVexiType} can be deduced that balanced and standard tableaux are equinumerous (see Section~\ref{SecBalanced});
\item for any vexillary permutations $\sigma \in S_n$ and $\omega \in S_m$, we have $\T_{\sigma}^E=\T_{\omega}^E$ if and only if $\sigma$ can be obtained from $\omega$ by adding or deleting some fix points at its end and beginning (see Section~\ref{SecEquivRelat}).
\end{itemize}

We finish our study with exhibiting some combinatorial properties of the types $\T_{\sigma}^E$ (see Section~\ref{SecCombinProp}). We explain how a new combinatorial description of Schur functions arise from them, and we enumerate the elements of $\Tab(\T_{\sigma}^E)$ such that the integers $1,\ldots,k$ appear in given fixed positions. This last proposition provides a direct proof of \cite[Equation~(2.4)]{EG} as asked by Edelman and Greene.

\section{Definitions, notations and background}\label{SecDefBack}
In this section, we recall some basic definitions and background about standard and balanced tableaux.

A partition $\lambda$ of a nonnegative integer $n \in \mathbb{N}$ is a nonincreasing sequence of nonnegative integers $\lambda_1 \geq \lambda_2 \geq \cdots$ such that $\sum \lambda_i =n$. The integers $\lambda_i \neq 0$ are called parts of the partition $\lambda$. The Ferrers diagram of $\lambda$ is a finite collection of boxes, or cells, arranged in left-justified rows of lengths given by the parts of $\lambda$. By flipping this diagram over its main diagonal, we obtain the diagram of the conjugate partition of $\lambda$, denoted by $\lambda'$. We usually identify a partition with its Ferrers diagram.

More generally, in this article we work with diagrams of arbitrary shape, namely finite subsets of $\mathbb{N} \times \mathbb{N}$, without any constrain: let $S \subset \mathbb{N} \times \mathbb{N}$ such that $|S|=n$ (where $|S|$ denote the cardinal of $S$). We identify $S$ with a set of boxes in the plan, using the English convention for the coordinates of each box (\emph{i.e.} we use ``matrix-like coordinates''). A \emph{tableau} $T$ of shape $S$ is a bijective filling of $S$ (seen as a set of boxes) with entries in $[n]:=\{ 1,2,\ldots,n \}$. Given a tableau we denote its shape by $\Sh (T)$. If we require $\Sh (T)$ to be a partition $\lambda$, then $T$ will be what is usually called a \emph{Young tableau}. Moreover, if we consider Young tableaux satisfying the conditions that the filling is 
\begin{enumerate}
\item increasing from left to right across each row;
\item increasing down each column;
\end{enumerate} 
we obtain the set of \emph{standard Young tableaux} of shape $\lambda$, denoted by ${\rm SYT}(\lambda)$. 

\begin{definition}
Let $S$ be a diagram and $\mathfrak{c}=(a,b)$ be a box of $S$. We define the following sets,
 \begin{equation}\label{eq1}
L_{S}(a,b) = \{ (k,b) \ | \ k \geq a, (k,b) \in S \}, \ A_{S}(a,b)= \{ (a,k) \ | \ k>b, (a,k) \in S \}, 
\end{equation}
\begin{equation}\label{eq2}
H_{S}(a,b) = A_{S}(a,b) \biguplus L_{S}(a,b),
\end{equation}
respectively called the \emph{leg}, the \emph{arm}, and the \emph{hook} based on $(a,b)$.
We will denote by $l_{S}(a,b)$, $\mathtt{a}_{S}(a,b)$, and $h_{S}(a,b)$ their respective cardinalities.
\end{definition}

This notion of hook allows us to enumerate standard Young tableaux, thanks to the well-known \emph{hook-length formula} (see \cite{S2} for more details about this formula).

\begin{Theorem}
Let $\lambda$ be a partition of the integer $n$, seen as a diagram. Then, we have 
\[ |{\rm SYT}(\lambda)|=\frac{n!}{\prod_{(a,b) \in \lambda} h_{\lambda}(a,b)}. \]
\end{Theorem}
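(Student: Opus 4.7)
The plan is to prove the hook-length formula by induction on $n=|\lambda|$, using the probabilistic argument of Greene, Nijenhuis and Wilf based on hook walks. Set $f^\lambda := n!/\prod_{(a,b) \in \lambda} h_\lambda(a,b)$; the goal is to show $|{\rm SYT}(\lambda)|=f^\lambda$. The base case $\lambda=\emptyset$ is trivial. Since every element of ${\rm SYT}(\lambda)$ is obtained from some $T'\in{\rm SYT}(\lambda\setminus\{c\})$ by placing $n$ in a corner cell $c$ of $\lambda$, one has the recursion $|{\rm SYT}(\lambda)|=\sum_{c} |{\rm SYT}(\lambda\setminus\{c\})|$, where $c$ runs through the corners of $\lambda$ (the cells with hook of length one). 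By the induction hypothesis it therefore suffices to verify the numerical identity $\sum_{c} f^{\lambda\setminus\{c\}}/f^\lambda = 1$.

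For a fixed corner $c=(a,b)$, the hooks that change when $c$ is removed are exactly those based at a cell in the same row or column as $c$, and each of them decreases by one. A direct computation gives
\[ \frac{f^{\lambda\setminus\{c\}}}{f^\lambda} = \frac{1}{n} \prod_{i<a} \frac{h_\lambda(i,b)}{h_\lambda(i,b)-1} \prod_{j<b} \frac{h_\lambda(a,j)}{h_\lambda(a,j)-1}. \]
The task then becomes to identify this rational number with the probability $p_c$ that a random \emph{hook walk} terminates at $c$, where a hook walk is defined by picking an initial cell of $\lambda$ uniformly at random and then jumping at each step uniformly to some other cell of the current hook, stopping when a corner is reached. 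Since every walk eventually halts at some corner, one has $\sum_c p_c = 1$, which would close the induction.

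The main obstacle is establishing the explicit product formula for $p_c$. One conditions on the strictly increasing sequences $a_1<\cdots<a_s=a$ and $b_1<\cdots<b_t=b$ of rows and columns visited by the walk, and shows that each such \emph{projection} contributes $\tfrac{1}{n}\prod_{k<s}(h_\lambda(a_k,b)-1)^{-1}\prod_{\ell<t}(h_\lambda(a,b_\ell)-1)^{-1}$ to $p_c$. Summing over all admissible pairs of subsets and factoring yields the desired product via the elementary identity $\prod_x (1+\tfrac{1}{q_x-1}) = \prod_x \tfrac{q_x}{q_x-1}$. An alternative route that avoids probability altogether is the Novelli--Pak--Stoyanovskii bijection between arbitrary bijective fillings of $\lambda$ by $[n]$ and pairs consisting of a standard Young tableau of shape $\lambda$ together with a \emph{hook function}; however, the inductive probabilistic argument above is typically the most concise.
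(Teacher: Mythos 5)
First, note that the paper does not actually prove this theorem: it quotes the hook-length formula as a classical result and refers the reader to \cite{S2}, so there is no internal proof to compare yours against. The route you chose, the probabilistic hook-walk argument, is precisely \cite{HookProb} in this paper's bibliography, and your skeleton is the right one: the corner recursion $|{\rm SYT}(\lambda)|=\sum_c|{\rm SYT}(\lambda\setminus\{c\})|$, the reduction to $\sum_c f^{\lambda\setminus\{c\}}/f^\lambda=1$, the computation of the ratio $\frac{1}{n}\prod_{i<a}\frac{h_\lambda(i,b)}{h_\lambda(i,b)-1}\prod_{j<b}\frac{h_\lambda(a,j)}{h_\lambda(a,j)-1}$ (using $h_\lambda(c)=1$ and the fact that only hooks in row $a$ and column $b$ change), and the final resummation over projections via $1+\frac{1}{q-1}=\frac{q}{q-1}$ are all correct as stated.

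The genuine gap is the step you yourself flag as ``the main obstacle'' and then do not carry out: the claim that a walk whose visited rows and columns are exactly $A=\{a_1<\cdots<a_s=a\}$ and $B=\{b_1<\cdots<b_t=b\}$ occurs with probability $\frac{1}{n}\prod_{k<s}(h_\lambda(a_k,b)-1)^{-1}\prod_{\ell<t}(h_\lambda(a,b_\ell)-1)^{-1}$ is asserted, not proved, and without it the argument is a plan rather than a proof. The missing piece is an induction on $s+t$: writing $p(A,B)$ for the conditional probability given that the walk starts at $(a_1,b_1)$, the first step must land on $(a_2,b_1)$ or $(a_1,b_2)$ (any other landing cell would insert an extra row or column into the projection), each with probability $(h_\lambda(a_1,b_1)-1)^{-1}$, so $p(A,B)=\frac{1}{h_\lambda(a_1,b_1)-1}\bigl(p(A\setminus\{a_1\},B)+p(A,B\setminus\{b_1\})\bigr)$; the induction then closes because of the hook identity
\[
h_\lambda(a_1,b_1)-1=\bigl(h_\lambda(a_1,b)-1\bigr)+\bigl(h_\lambda(a,b_1)-1\bigr),
\]
which holds exactly because $(a,b)$ is a corner (so $\lambda_a=b$ and $\lambda'_b=a$). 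You should either supply this induction or cite \cite{HookProb} for it explicitly; as written, the heart of the argument is missing. Your closing remark that the Novelli--Pak--Stoyanovskii bijection of \cite{NPS} gives an alternative, fully bijective proof is accurate but is likewise only a pointer, not a proof.
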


In \cite{EG}, Edelman and Greene introduced the concept of balanced tableaux, defined as follows.

\begin{definition}
Let $S$ be a diagram such that $|S|=n$. A \emph{balanced tableau} $T=(t_{a,b})_{(a,b)\in S}$ of shape $S$ is a Young tableau satisfying the following condition:
\[\text{for all} \ (a,b) \in S, \ \mathtt{a}_{a,b}=|\{(x,y) \in H_{S}(a,b) \ | \ t_{x,y}<t_{a,b}  \}|. \]
 We denote by $\Bal (S)$ the set of all balanced tableaux of shape $S$.
\end{definition}

In \cite{EG} the authors proved the following result about combinatorics of balanced tableaux.

\begin{Theorem}[\cite{EG}, Theorem~2.2]\label{TheoEGBal}
Let $\lambda$ be a partition of $n$. Then, we have 
\[
|\Bal(\lambda)|=|\text{SYT}(\lambda)|=f^{\lambda}.
\]
\end{Theorem}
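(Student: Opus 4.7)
The plan is to prove both equalities by introducing a single intermediary quantity: the number $|\Red(w_\lambda)|$ of reduced expressions of the dominant permutation $w_\lambda$ whose Rothe diagram equals $\lambda$. Since $\ell(w_\lambda)=|\lambda|=n$, both sides of the desired equality will be matched to $|\Red(w_\lambda)|$ via explicit bijections.

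I would first construct a bijection $\Phi\colon\Red(w_\lambda)\to\Bal(\lambda)$ as follows. Given a reduced expression $s_{i_1}\cdots s_{i_n}$ of $w_\lambda$, each letter $s_{i_k}$ creates exactly one new inversion when multiplied into the running product $s_{i_1}\cdots s_{i_{k-1}}$; this new inversion is one of the $n$ inversions of $w_\lambda$, which in turn is identified with a unique cell of $\lambda$ via the Rothe diagram. Define $\Phi(s_{i_1}\cdots s_{i_n})$ to be the filling of $\lambda$ placing the label $k$ in that cell. One must then check that the resulting tableau is always balanced and that every balanced tableau is produced by exactly one reduced expression.

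Next I would show $|\Red(w_\lambda)|=f^\lambda$. Since $w_\lambda$ is dominant, hence vexillary, Stanley's theorem on the number of reduced expressions of a vexillary permutation gives $|\Red(w_\lambda)|=f^{\lambda(w_\lambda)}=f^\lambda$; equivalently, the Edelman--Greene insertion provides a bijection $\Red(w_\lambda)\to{\rm SYT}(\lambda)$. Combined with the previous step this yields $|\Bal(\lambda)|=|{\rm SYT}(\lambda)|=f^\lambda$.

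The main obstacle is the first bijection: checking that $\Phi$ lands in $\Bal(\lambda)$ and is surjective. The balanced condition at a cell $(a,b)$ requires that exactly $\mathtt{a}_\lambda(a,b)$ of the cells in $H_\lambda(a,b)$ carry labels smaller than that of $(a,b)$; once the labelling is translated into statements about positions of simple transpositions in the word, one must extract a precise interleaving pattern forced by the braid relations together with the reducedness of the whole word. Surjectivity is best handled by inducting on $n$ within the broader framework of balanced tableaux of Rothe diagrams of arbitrary permutations, proving the more general statement $|\Bal(D(\sigma))|=|\Red(\sigma)|$ for all $\sigma$. In any such tableau the cell labelled $n$ must sit at the bottom of its column (for at that cell the balanced condition forces $l_{D(\sigma)}(a,b)=1$), so deleting it yields a balanced tableau on the Rothe diagram of a permutation of shorter length, to which the inductive hypothesis applies.
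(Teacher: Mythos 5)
Your proposal is correct and takes essentially the same route as the proof the paper recounts from \cite{FGR} in Section~\ref{SecDefBack}: identify $\Bal(\lambda)$ with $\Red(\sigma)$ for a vexillary permutation whose Rothe diagram is the Ferrers diagram of $\lambda$ (your dominant $w_\lambda$ is a concrete instance of Theorem~\ref{TheoFGRexistVexy}) and then apply Stanley's formula (Theorem~\ref{TheoStanVexil}); the verifications you defer are exactly the content of \cite[Theorem~2.4]{FGR}, which the paper likewise cites rather than reproves. The paper also gives a second derivation in Section~\ref{SecBalanced} via types and the exchange algorithm, but that argument rests on the same two pillars (the reduced-word bijection and Stanley's theorem), so your approach is not genuinely different.
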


The original proof is quite involved, and an alternative one is given in \cite{FGR}, which we now detail. In order to do so, we need to introduce the notion of reduced decomposition of a permutation and of vexillary permutation. It is classical that the symmetric group $S_n$ is generated by the simple transpositions $s_i=(i,i+1)$, $i \in [n-1]$, exchanging the positions of the integers $i$ and $i+1$. We denote by $\ell(\sigma)$ the minimal integer such that $\sigma$ can be written as a product of $\ell(\sigma)$ simple transpositions, and we define the reduced decompositions of any $\sigma \in S_n$ to be the elements of the set of word on $\{s_1,\ldots,s_{n-1}\}$
\[
\Red(\sigma):=\{s_{i_1}\cdots s_{\ell(\sigma)} \ | \ s_j \in S \ \text{and} \ s_{i_1}\cdots s_{\ell(\sigma)}=\sigma \}.
\]
Reduced decompositions are closely related to a partial order on $S_n$, called the (right) weak order and denoted by $\leq_R$. The weak order is defined as the transitive and reflexive closure of the covering relations 
\[
\text{for all} \ \sigma,\omega\in S_n, \ \sigma \lhd_R \omega \ \text{if and only if there exists} \ i\in [n-1] 
\]
\[
 \text{such that} \ \sigma=\omega.s_i \ \text{and} \ \ell(\omega)=\ell(\sigma)+1.
\]
In this context, there is a clear one-to-one correspondence between reduced decompositions of $\sigma$ and maximal chains from $Id$ to $\sigma$ in the poset $(S_n,\leq_R)$. There is also an alternative description of this poset in terms of \emph{inversion sets}. For all $\sigma \in S_n$, we define its inversion set to be
\[
\Inv(\sigma):=\{ (a,b) \ | \ 1 \leq a<b \leq n \ \text{and} \ \sigma^{-1}(a)> \sigma^{-1}(b) \}.
\]
It is classical that for any $\sigma,\omega \in S_n$, we have that $\sigma \leq_R \omega$ if and only if $\Inv(\sigma) \subseteq \Inv(\omega)$ (see, for instance, \cite{BB}).

We now define vexillary permutations.

\begin{definition}\label{DefVexillary}
Let $\sigma \in S_n$, we denote by $(d_i(\sigma))_i$ and $(g_i(\sigma))_i$ the finite sequences defined by
\begin{itemize}
\item $d_i(\sigma):=| \{ j>i \ | \ \sigma(j)<\sigma(i) \}|$,
\item $g_i(\sigma):=| \{ j<i \ | \ \sigma(j)>\sigma(i) \}|$.
\end{itemize}
We denote by $\mu(\sigma)$ and $\lambda(\sigma)$ the partitions obtained by rearranging in a nonincreasing order the sequences $(d_i)_i$ and $(g_i)_i$, respectively. We say that $\sigma$ is \emph{vexillary} if and only if $\lambda(\sigma)=\mu'(\sigma)$.
\end{definition}

In \cite{S1}, Stanley proved the following result using symmetric functions, giving an explicit formula to compute the number of reduced decompositions of any vexillary permutation.

\begin{Theorem}[Stanley, \cite{S1}]\label{TheoStanVexil}
Let $\sigma \in S_n$, if $\sigma$ is vexillary then
\[
|\Red(\sigma)|=f^{\lambda(\sigma)}.
\]
\end{Theorem}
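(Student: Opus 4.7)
The plan is to follow Stanley's original strategy via the theory of \emph{Stanley symmetric functions}. For each $\sigma \in S_n$ of length $\ell := \ell(\sigma)$, introduce the formal power series
\[
F_\sigma(x_1, x_2, \ldots) = \sum_{(\mathbf{i}, \mathbf{a})} x_{a_1} x_{a_2} \cdots x_{a_\ell},
\]
where the sum runs over pairs consisting of a reduced word $\mathbf{i} = (i_1, \ldots, i_\ell) \in \Red(\sigma)$ and a weakly increasing \emph{compatible sequence} $\mathbf{a} = (a_1 \le \cdots \le a_\ell)$ of positive integers satisfying the constraint $a_j < a_{j+1}$ whenever $i_j < i_{j+1}$.

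The first step is to show that $F_\sigma$ is a symmetric function. This can be done by constructing, for each adjacent pair of variables $x_k, x_{k+1}$, an involution on the set of compatible pairs that swaps the number of occurrences of $k$ and $k+1$ in $\mathbf{a}$, operating on those positions not constrained by the descent pattern of $\mathbf{i}$. The second step is to extract $|\Red(\sigma)|$ from $F_\sigma$ by isolating the coefficient of $x_1 x_2 \cdots x_\ell$: since a weakly increasing sequence in $\{1, \ldots, \ell\}^\ell$ with all distinct entries must be $(1, 2, \ldots, \ell)$, this coefficient equals exactly $|\Red(\sigma)|$. On the other hand, the coefficient of $x_1 \cdots x_\ell$ in a Schur function $s_\lambda$ with $|\lambda| = \ell$ equals $f^\lambda$ (semistandard tableaux of content $(1^\ell)$ are precisely standard Young tableaux). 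Expanding $F_\sigma = \sum_\lambda a_{\sigma,\lambda} s_\lambda$ therefore yields the identity
\[
|\Red(\sigma)| = \sum_\lambda a_{\sigma,\lambda}\, f^\lambda.
\]

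The heart of the proof, specific to the vexillary case, is the Schur-collapse statement
\[
F_\sigma = s_{\lambda(\sigma)} \qquad \text{when } \sigma \text{ is vexillary}.
\]
I would approach this by induction on $n$: using the characterization $\lambda(\sigma) = \mu'(\sigma)$, one picks a distinguished descent of $\sigma$ (for instance, one corresponding to a removable corner cell of $\lambda(\sigma)$) and reduces to a smaller vexillary permutation $\sigma'$ whose partition $\lambda(\sigma')$ differs from $\lambda(\sigma)$ by a horizontal or vertical strip. One then verifies that the recursion satisfied by $F_\sigma$ under this reduction matches the Pieri-type recurrence for $s_{\lambda(\sigma)}$. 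An alternative route is to identify $F_\sigma$ directly with a flagged Schur function associated with $\lambda(\sigma)$ and to check that the vexillary hypothesis makes the flag non-restrictive, so that the flagged Schur function reduces to the ordinary one.

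The main obstacle is precisely this collapse step: understanding why the combinatorial condition $\lambda(\sigma) = \mu'(\sigma)$ forces the Schur expansion of $F_\sigma$ to consist of a single term, whereas for a generic permutation multiple shapes contribute. This is a rigidity phenomenon peculiar to the $2143$-avoiding class, and it requires carefully tracking how the reduction on reduced words interacts with the shape of $\lambda(\sigma)$. Once the collapse is established, substituting $\lambda = \lambda(\sigma)$ into the identity $|\Red(\sigma)| = \sum_\lambda a_{\sigma,\lambda} f^\lambda$ gives $|\Red(\sigma)| = f^{\lambda(\sigma)}$, completing the proof.
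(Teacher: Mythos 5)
This theorem is not proved in the paper: it is quoted as a black-box result of Stanley \cite{S1}, so there is no internal argument to compare against. Your proposal correctly reconstructs the \emph{framework} of Stanley's original proof: defining the symmetric function $F_\sigma$ from reduced words and compatible sequences, proving its symmetry by a Bender--Knuth-style involution, and extracting $|\Red(\sigma)|$ as the coefficient of the squarefree monomial $x_1\cdots x_\ell$ (which indeed equals $f^{\lambda}$ when applied to $s_\lambda$, since semistandard tableaux of content $(1^\ell)$ are standard). Those steps are sound, modulo the usual ascent/descent convention ambiguity, which at worst conjugates the shape and is harmless because $f^{\lambda}=f^{\lambda'}$.

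However, the proposal has a genuine gap exactly where the theorem's content lies: the collapse $F_\sigma=s_{\lambda(\sigma)}$ for vexillary $\sigma$. You explicitly identify this as ``the main obstacle'' and offer two candidate strategies --- an induction matching a Pieri-type recurrence, and an identification with flagged Schur functions --- but you carry out neither. The inductive route requires producing the reduction on reduced words and verifying the claimed recurrence, and the crucial rigidity (why $2143$-avoidance forces a single Schur term when generic permutations give several) is precisely what is asserted without argument. The flagged-Schur route is also misstated: for vexillary $\sigma$ the flag is genuinely restrictive at the level of the Schubert polynomial, and it is only the stabilization defining $F_\sigma$ (not the vexillary hypothesis) that sends the flag to infinity; making this precise requires machinery you have not set up. As written, the proposal is a correct outline of the known strategy with its central lemma left unproved, so it does not constitute a proof of the statement.
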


We are now able to explain the proof of Theorem~\ref{TheoEGBal} that can be found in \cite{FGR}, which uses Theorem~\ref{TheoStanVexil} as fundamental tool. The first step consists in associating a diagram to each permutation.

\begin{definition}
Let $\sigma \in S_n$, the \emph{Rothe diagram} $\textbf{D}(\sigma)$ of $\sigma$ is the subset of $[n]\times [n]$ defined by
\[
\textbf{D}(\sigma):=\{(a,\sigma(b)) \in [n] \times [n] \ | \ a<b \ \text{and} \ \sigma(a)>\sigma(b) \}.
\]
\end{definition}

\begin{figure}[!h] 
\includegraphics[width=9cm]{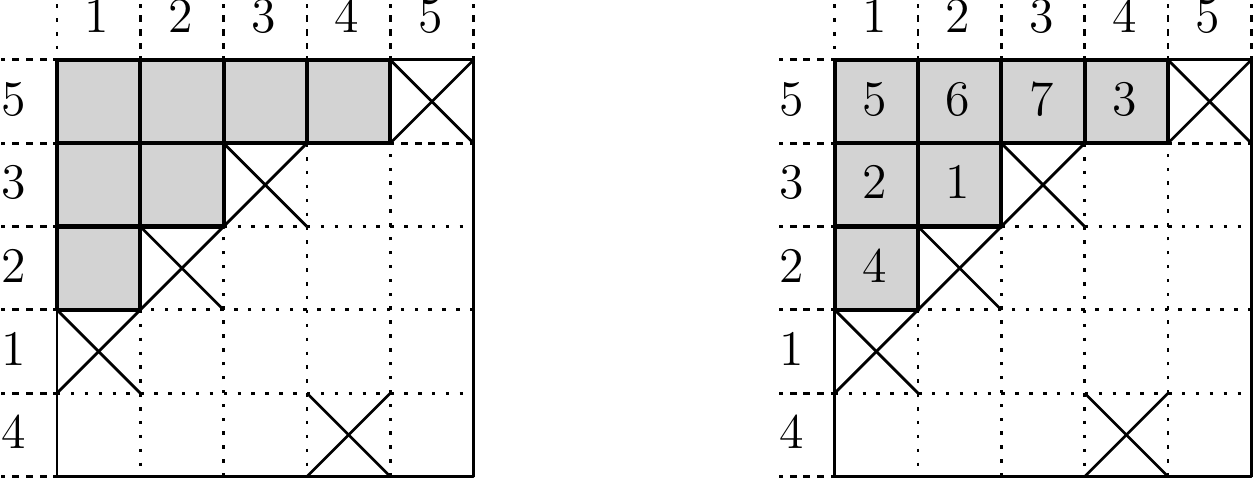}
\caption{Rothe diagram of the permutation $\sigma=[5,3,2,1,4]$ (on the left) and a balanced tableau of shape $\textbf{D}(\sigma)$ (on the right).}\label{FigRothEx}
\end{figure}

As it is shown in \cite{FGR}, the balanced tableaux whose shape is the Rothe diagram of a given permutation $\sigma$ (also called \emph{balanced labellings} of $\textbf{D}(\sigma)$) are intimately related to the reduced decompositions of $\sigma$.

\begin{Theorem}[\cite{FGR}, Theorem~2.4]
Let $\sigma \in S_n$ and $\emph{\textbf{D}}(\sigma)$ be its Rothe diagram. Then, there is a bijection between the set of the reduced decompositions of $\sigma$ and the set of the balanced tableaux of shape $\emph{\textbf{D}}(\sigma)$.
\end{Theorem}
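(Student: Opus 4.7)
The plan is to construct an explicit bijection $\Phi \colon \Red(\sigma) \to \Bal(\textbf{D}(\sigma))$ by recording, step by step, which inversion of $\sigma$ is created by each letter of a reduced decomposition. Given $\rho = s_{i_1}\cdots s_{i_k} \in \Red(\sigma)$ with $k=\ell(\sigma)$, set $\sigma_j := s_{i_1}\cdots s_{i_j}$ for $0\leq j \leq k$. Since $\rho$ is reduced, $\Inv(\sigma_j)=\Inv(\sigma_{j-1}) \uplus \{\iota_j\}$ for a unique inversion $\iota_j \in \Inv(\sigma)$, so $j \mapsto \iota_j$ is a bijection between $[k]$ and $\Inv(\sigma)$. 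Composing with the standard bijection between $\Inv(\sigma)$ and the cells of $\textbf{D}(\sigma)$ (sending $(a,b)\in \Inv(\sigma)$ to the cell $(\sigma^{-1}(b),a)$), I obtain a bijective filling $\Phi(\rho)$ of $\textbf{D}(\sigma)$ by $[k]$.

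The main technical step is to verify that $\Phi(\rho)$ is balanced. Fix a cell $\mathfrak{c}\in \textbf{D}(\sigma)$ with label $j=\Phi(\rho)(\mathfrak{c})$ and associated inversion $\iota_j=(a,b)$. Under the identification above, the hook $H_{\textbf{D}(\sigma)}(\mathfrak{c})$ corresponds exactly to the inversions of $\sigma$ that, together with $(a,b)$, form a \emph{braid triangle}, i.e.\ involve a third index $c$ producing a length-two chain in the inversion order through $(a,b)$; moreover, arm-cells of $\mathfrak{c}$ correspond to triangles of one geometric type and leg-cells to the other. The key ingredient is a \emph{triangle lemma}: in any reduced decomposition of $\sigma$, the three inversions of such a triangle must be created in an order in which $(a,b)$ appears second. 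Consequently every arm-cell of $\mathfrak{c}$ acquires a label smaller than $j$ and every leg-cell acquires a label larger than $j$, so the number of hook entries strictly less than $j$ is exactly $\mathtt{a}_{\textbf{D}(\sigma)}(\mathfrak{c})$, which is the balanced condition.

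For the inverse direction, given $T \in \Bal(\textbf{D}(\sigma))$, I would read the cells of $\textbf{D}(\sigma)$ in increasing order of their labels: the cell labeled $j$ dictates an inversion $\iota_j=(a,b)$, and the balanced hypothesis, applied to $\mathfrak{c}$ and inductively to all previously processed cells, forces the two positions underlying $\iota_j$ to be adjacent in the current partial product $\sigma_{j-1}$, so that the step is realised by a unique simple transposition $s_{i_j}$. Concatenating these transpositions produces a word $\Psi(T)$, which is reduced by construction, and a routine verification shows that $\Phi$ and $\Psi$ are mutually inverse.

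The main obstacle is the triangle lemma together with its careful translation between reduced-word combinatorics and the geometry of Rothe diagrams: precisely matching the arm-cells of $\mathfrak{c}$ with the ``earlier-created'' triangle partners and the leg-cells with the ``later-created'' ones requires a case-by-case check of the possible position/value configurations of a third index $c$ realising a cell of $H_{\textbf{D}(\sigma)}(\mathfrak{c})$. Once this dictionary is fixed, the well-definedness of $\Phi$, the admissibility of each step in the construction of $\Psi$, and the mutual inverseness of the two maps all follow essentially by inspection.
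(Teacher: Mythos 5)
Your overall strategy---identifying a reduced word with the order in which it creates the inversions of $\sigma$, transporting that order to the cells of $\textbf{D}(\sigma)$, and letting the balanced condition certify exactly which orderings arise---is the same bijection the paper sketches (the paper runs it in the opposite direction, from a balanced labelling to a maximal chain in the weak order, deferring the substance to \cite{FGR}). However, the step on which your whole verification rests is false as stated. You claim that the triangle lemma forces \emph{every} arm-cell of $\mathfrak{c}$ to receive a label smaller than $j$ and \emph{every} leg-cell a label larger than $j$. That is a strictly stronger condition than balancedness, and it already fails for $\sigma=[3,2,1]\in S_3$. Here $\textbf{D}(\sigma)=\{(1,1),(1,2),(2,1)\}$; the cell $(1,1)$ corresponds to the inversion $(1,3)$, its arm-cell $(1,2)$ to the inversion $(2,3)$, and its leg-cell $(2,1)$ to the inversion $(1,2)$. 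The reduced word $s_1s_2s_1$ creates the inversions in the order $(1,2),(1,3),(2,3)$, so the leg-cell gets label $1$, the cell $(1,1)$ gets label $2$, and the arm-cell gets label $3$: the arm-cell is \emph{larger} and the leg-cell is \emph{smaller}, yet the resulting tableau is balanced (exactly one hook entry below $2$, matching $\mathtt{a}_{\textbf{D}(\sigma)}(1,1)=1$). If your dichotomy were correct, only one of the two reduced words of $[3,2,1]$ would map into $\Bal(\textbf{D}(\sigma))$, so $\Phi$ could not be the claimed bijection.

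What the triangle lemma actually yields, for a value $c$ with $a<c<b$ and both $(a,c),(c,b)\in\Inv(\sigma)$, is that $(a,b)$ is created strictly \emph{between} the other two, i.e.\ exactly one of the corresponding pair of hook cells (one in the arm, one in the leg) gets a smaller label --- not that the arm member is the smaller one. The correct argument must therefore group the cells of $H_{\textbf{D}(\sigma)}(\mathfrak{c})$ according to the third index producing them, run the case analysis on the possible position/value configurations (some third indices contribute only an arm-cell that is forced smaller, some only a leg-cell forced larger, some an arm/leg pair of which exactly one is smaller), and only after summing over all third indices recover the count $\mathtt{a}_{\textbf{D}(\sigma)}(\mathfrak{c})$. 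A parallel issue affects your inverse map $\Psi$: the assertion that the balanced condition forces each prefix of the label order to be an inversion set (equivalently, that the two entries of $\iota_j$ are adjacent in $\sigma_{j-1}$) is precisely the hard content of \cite[Theorem~2.4]{FGR} and needs the same grouped counting argument, not just ``inspection.''
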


Let us briefly explain how this bijection is constructed. By definition of the Rothe diagram of a permutation $\sigma \in S_n$, it is clear that 
\[
(a,\sigma(b)) \in \textbf{D}(\sigma) \ \text{if and only if} \ (a,b) \in \Inv(\sigma).
\]
Thus, a balanced tableau corresponds to an ordering $[(a_1,b_1),\ldots,(a_{\ell(\sigma)},b_{\ell(\sigma)})]$ of the inversions of $\sigma$. Furthermore, the authors proved in \cite{FGR} that $\{(a_1,b_1),\ldots,(a_i,b_i)\}$ is the inversion set of a permutation $\sigma_i \in S_n$ for all $1 \leq i \leq \ell(\sigma)$. Therefore, we have 
\[
Id \lhd_R \sigma_1 \lhd_R \sigma_2 \lhd_R \ldots \lhd_R \sigma_{\ell(\sigma)}=\sigma,
\]
\emph{i.e.}, a balanced tableau of shape $\textbf{D}(\sigma)$ corresponds to a maximal chain from $Id$ to $\sigma$ in the weak order on $S_n$. Thus, it corresponds to a reduced decomposition of $\sigma$, and it is proved in \cite{FGR} that this correspondence is bijective.

Eventually, this correspondence leads to a proof of Theorem~\ref{TheoEGBal}. 

\begin{Theorem}[\cite{FGR}, Theorem~3.4]\label{TheoFGRexistVexy}
Let $\lambda$ be a partition of $n$. Then, there exists a vexillary permutation $\sigma \in S_k$ for some $k \in \mathbb{N}$ such that 
\begin{itemize}
\item $\lambda(\sigma)=\lambda$;
\item the shape of $\emph{\textbf{D}}(\sigma)$ is $\lambda$ (up-to the deletion of some empty columns).
\end{itemize}
\end{Theorem}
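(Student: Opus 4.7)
The plan is to exhibit, for each partition $\lambda$ of $n$, an explicit vexillary permutation $\sigma_\lambda$ whose Rothe diagram coincides, after deletion of any empty columns, with the Young diagram of $\lambda$ and for which $\lambda(\sigma_\lambda) = \lambda$. The natural candidate is a \emph{dominant} permutation, i.e., a permutation whose code $(d_i(\sigma))_i$ (in the notation of Definition~\ref{DefVexillary}) is already nonincreasing. Dominant permutations form a well-known subclass of vexillary permutations, and their key feature is that the Rothe diagram reproduces the code as a genuine Young diagram, which is exactly the structural rigidity we need.

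Concretely, I would set $\mu := \lambda'$, write $r = \lambda_1$ for the number of parts of $\mu$, pick $k$ large enough (e.g.\ $k = \mu_1 + r$), and take $\sigma_\lambda \in S_k$ to be the unique permutation with $d_i(\sigma_\lambda) = \mu_i$ for $i \leq r$ and $d_i(\sigma_\lambda) = 0$ for $i > r$. Such a permutation is built greedily: $\sigma_\lambda(i)$ is chosen as the $(\mu_i + 1)$-th smallest element of $[k] \setminus \{\sigma_\lambda(1), \ldots, \sigma_\lambda(i-1)\}$, and the nonincreasingness of $\mu$ guarantees that the resulting $\sigma_\lambda$ is dominant. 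For $\lambda = (3,2,1,1)$ this recipe recovers the permutation $\sigma_\lambda = [5,3,2,1,4]$ of Figure~\ref{FigRothEx}.

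The central step is to verify that $\textbf{D}(\sigma_\lambda)$ equals the Young diagram $\{(i,j) : 1 \leq i \leq r,\ 1 \leq j \leq \mu_i\}$ of $\mu$. The row cardinalities $d_a(\sigma_\lambda) = \mu_a$ are built in, so what remains is left-justification, namely the claim that the cells of row $a$ occupy exactly columns $1, 2, \ldots, \mu_a$. I would prove this by a short induction on $a$, showing that the $\mu_a$ values of $[k]$ that are both smaller than $\sigma_\lambda(a)$ and placed at a position strictly greater than $a$ are precisely $\{1, 2, \ldots, \mu_a\}$; this is a direct consequence of the inductive description of $\sigma_\lambda$ combined with the nonincreasingness of $\mu$, and is a classical observation about dominant permutations.

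Once this identification is in hand, the two required properties follow by bookkeeping. The nonempty columns of $\textbf{D}(\sigma_\lambda)$ have lengths $\mu'_1 \geq \mu'_2 \geq \cdots$, so reading off the $g_b$ statistic of Definition~\ref{DefVexillary} from these columns immediately yields $\lambda(\sigma_\lambda) = \mu' = \lambda$. Since the code $(d_i(\sigma_\lambda))_i$ is itself the nonincreasing sequence $\mu$, we also have $\mu(\sigma_\lambda) = \mu = \lambda'$, so that $\mu(\sigma_\lambda)' = \lambda = \lambda(\sigma_\lambda)$, which is precisely the vexillary condition. The only delicate step I anticipate is the left-justification of $\textbf{D}(\sigma_\lambda)$; it is combinatorially straightforward but requires careful tracking of which values have already been placed when one reaches position $a$.
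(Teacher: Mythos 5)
First, a point of reference: the paper does not actually prove this statement --- it is quoted from \cite{FGR} and used as a black box, and the closest the paper comes to an in-house argument is the construction of the diagram $S(\lambda)$ and the vexillary permutation $\sigma_{\lambda}$ in Section~\ref{SecBalanced}, which plays the same role inside the author's framework of types. So your proposal has to be judged against the statement itself. Your route --- dominant permutations, i.e.\ permutations with nonincreasing code, built greedily from a prescribed code, whose Rothe diagram is the left-justified Ferrers diagram of that code --- is the standard one and is essentially the argument in \cite{FGR}; the left-justification step you flag as the only delicate point is indeed the classical fact and your inductive sketch of it is correct.

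There is, however, a concrete gap between what you construct and what the statement asks for. You set $\mu := \lambda'$, take the dominant permutation $\sigma_{\lambda}$ with code $\mu$, and correctly establish that $\textbf{D}(\sigma_{\lambda})$ is the Ferrers diagram of $\mu = \lambda'$. The second bullet of the theorem asks for the shape of $\textbf{D}(\sigma)$ to be $\lambda$, not $\lambda'$, and your closing claim that ``the two required properties follow by bookkeeping'' silently skips this: for your $\sigma_{\lambda}$ the second bullet fails whenever $\lambda \neq \lambda'$. Your own example shows it: for $\lambda = (3,2,1,1)$ you obtain $\sigma_{\lambda} = [5,3,2,1,4]$, whose Rothe diagram is the diagram of $(4,2,1) = \lambda'$, exactly as in Figure~\ref{FigRothEx}. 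The underlying tension is not entirely your fault. With the paper's Definition~\ref{DefVexillary}, the sorted row lengths of $\textbf{D}(\sigma)$ give $\mu(\sigma)$ and the sorted column lengths give $\lambda(\sigma)$, so the two bullets together force $\mu(\sigma) = \lambda = \lambda(\sigma) = \mu(\sigma)'$ and can hold simultaneously only for self-conjugate $\lambda$; the statement is lifted from \cite{FGR}, where the shape of a vexillary permutation is the sorted code, i.e.\ the conjugate of the present $\lambda(\sigma)$. A complete write-up must confront this: either take the dominant permutation with code $\lambda$ (getting $\textbf{D}(\sigma) \cong \lambda$ but $\lambda(\sigma) = \lambda'$) or keep your choice (getting $\lambda(\sigma) = \lambda$ but $\textbf{D}(\sigma) \cong \lambda'$), say explicitly which bullet is obtained only in conjugate form, and note that either version suffices for the intended deduction of Theorem~\ref{TheoEGBal}, since $f^{\lambda} = f^{\lambda'}$ and the construction is applied to all partitions at once.
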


Combining Theorem~\ref{TheoStanVexil} and Theorem~\ref{TheoFGRexistVexy}, Theorem~\ref{TheoEGBal} follows immediately. \\
In the sequel of this article, we will generalize Theorem~\ref{TheoEGBal} by exhibiting many families of tableaux having similar combinatorial properties as those of balanced tableaux. 

\section{Type of a tableau, definition and general properties}\label{SecDefTypes}

In this section, we introduce a natural generalization of balanced tableaux. We then study some of the properties of these generalizations and and mention some of the questions that naturally arise from this concept.

\subsection{Definition of types and filling algorithm}

In this section, $S$ will denote a diagram without any constraint on its shape.

\begin{definition}
A \emph{type} of shape $S$ is a filling of $S$ with integers $(\theta(\mathfrak{c}))_{\mathfrak{c} \in S}$ satisfying the following condition: for all $\mathfrak{c} \in S$, 
\[ 
0 \leq \theta(\mathfrak{c}) \leq h_{\mathfrak{c}}(S)-1.
\]
We denote by $\Typ(S)$ the set of all the type being of shape $S$.
\end{definition}

In the following definition, we explain how one can associate a type to each tableau of a given shape.

\begin{definition}\label{DefTypTab}
Let $T=(t_{\mathfrak{c}})_{\mathfrak{c}\in S}$ be a tableau of shape $S$. The type of $T$ is the type $\T=(\theta(\mathfrak{c}))_{\mathfrak{c}\in S}$ such that for all $\mathfrak{c} \in S$,
\[
\theta(\mathfrak{c})=| \{ \mathfrak{d} \in H_S(\mathfrak{c}) \ | \ t_{\mathfrak{d}} < t_{\mathfrak{c}} \}|.
\]
We denote by $\Tab_S(\T)$ the set of all tableaux of shape $S$ whose type is $\T$. When there is no ambiguity, we simply denote this set by $\Tab (\T)$.
\end{definition}

\begin{example}\label{ExDefBalStand}
Both balanced and standard tableaux are special instances of this classification: let $\B=(\mathtt{a}_{\mathfrak{c}}(S))_{\mathfrak{c} \in S}$ be the type whose each box is filled by its arm length, and $\mathcal{S}t \in \Typ(S)$ be the type whose each box is filled by the integer 0. By definition, we clearly have that $\Tab(\B)=\Bal(S)$. Moreover, if $T \in \Tab(\mathcal{S}t)$, then each integer appearing in $T$ is minimal in its associated hook. Thus, the entries in $T$ are increasing from left to right along each row and decreasing from top to bottom along each column, so that $\Tab(\mathcal{S}t)$ is the set of standard tableaux of shape $S$.
\end{example}

Definition~\ref{DefTypTab} provides a way to classify all tableaux according to their type. However, at this stage it is unclear if $\Tab(\T)$ is empty or not for any given type $\T$. In what follows, we give a combinatorial way to construct all the elements of $\Tab(\T)$, which will implies that $\Tab(\T)\neq \emptyset$ for any $\T \in \Typ(S)$. We begin with a useful definition.

\begin{definition}
Let $\T=(\theta(\mathfrak{c}))_{\mathfrak{c} \in S} \in \Typ(S)$ and $\mathfrak{c} \in S$. We say that $\mathfrak{c}$ is erasable in $\T$ if and only if the following two conditions are satisfied:
\begin{enumerate}
\item $\theta(\mathfrak{c})= 0$;
\item for all $\mathfrak{d} \in S \setminus \{\mathfrak{c} \}$ such that $\mathfrak{c} \in H_S(\mathfrak{d})$, we have $\theta(\mathfrak{d}) \neq 0$.
\end{enumerate}
\end{definition}

Clearly, if we consider an erasable box $\mathfrak{c}$ of a type $\T$ of shape $S$, if we suppress the box $\mathfrak{c}$ and we decrease by one the integer in each box $\mathfrak{d} \in S \setminus \{\mathfrak{c}\}$ such that $\mathfrak{c} \in H_S(\mathfrak{d})$, then what we obtain is a type of shape $S \setminus \{\mathfrak{c}\}$. This fact allows us to construct all the elements of $\Tab(\T)$, as it is explained after the following technical lemma.

\begin{lemma}\label{LemEras}
Let $\T=(\theta(\mathfrak{c})) \in \Typ(S)$. Then, there exists a box $\mathfrak{c}$ which is erasable in $\T$.
\end{lemma}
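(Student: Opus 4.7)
I would prove the existence of an erasable box by a double minimality argument. First, I check that the set of boxes with zero label is non-empty: pick the box $\mathfrak{c}=(a,b) \in S$ with $a$ maximal, and then with $b$ maximal among the boxes of $S$ in row $a$. By construction no box of $S$ lies strictly below $\mathfrak{c}$ in column $b$, nor strictly to the right of $\mathfrak{c}$ in row $a$, so $H_S(\mathfrak{c}) = \{\mathfrak{c}\}$ and $h_S(\mathfrak{c}) = 1$. The defining constraint of a type, $0 \leq \theta(\mathfrak{c}) \leq h_S(\mathfrak{c}) - 1 = 0$, then forces $\theta(\mathfrak{c}) = 0$.

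Next, I would choose $\mathfrak{c}_0 = (a_0,b_0) \in S$ to minimize the pair $(a_0,b_0)$ in the lexicographic order (first minimize the row coordinate, then, among boxes in that row, minimize the column coordinate), subject to $\theta(\mathfrak{c}_0) = 0$. I claim that this $\mathfrak{c}_0$ is erasable. Condition~(1) of the definition holds by construction. For condition~(2), suppose towards a contradiction that some $\mathfrak{d} \in S \setminus \{\mathfrak{c}_0\}$ satisfies $\mathfrak{c}_0 \in H_S(\mathfrak{d})$ and $\theta(\mathfrak{d}) = 0$. Unpacking $H_S(\mathfrak{d}) = A_S(\mathfrak{d}) \uplus L_S(\mathfrak{d})$, together with $\mathfrak{d} \neq \mathfrak{c}_0$, forces $\mathfrak{d}$ to lie either in the same column as $\mathfrak{c}_0$ strictly above it (row coordinate $<a_0$) or in the same row strictly to its left (row $=a_0$, column $<b_0$). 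In either case $\mathfrak{d}$ is lexicographically strictly smaller than $\mathfrak{c}_0$ while still being a zero-label box, contradicting the minimality of $\mathfrak{c}_0$.

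There is no serious obstacle here; the main point to watch is to unpack the hook definition in the right direction. The condition ``$\mathfrak{c}_0 \in H_S(\mathfrak{d})$ with $\mathfrak{d} \neq \mathfrak{c}_0$'' is asymmetric: it says that $\mathfrak{d}$ sits to the north-west of $\mathfrak{c}_0$ in the diagram, which is exactly the direction in which an extremal (north-western) choice of $\mathfrak{c}_0$ excludes the existence of such a $\mathfrak{d}$.
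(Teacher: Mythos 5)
Your proof is correct and takes essentially the same extremal approach as the paper: both arguments rest on the observation that any $\mathfrak{d}\neq\mathfrak{c}$ with $\mathfrak{c}\in H_S(\mathfrak{d})$ lies strictly above $\mathfrak{c}$ in its column or strictly to its left in its row, and then select a zero-labelled box that is extremal in that north-west direction. The paper packages this as a maximal chain of zero-labelled boxes starting from a box with singleton hook, while you take the lexicographically minimal zero-labelled box directly; the two are interchangeable.
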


\begin{proof}
Since $S$ is finite, there exists a box $\mathfrak{d}$ such that $H_S(\mathfrak{d})=\{ \mathfrak{d}\}$. Thus, we have $\theta(\mathfrak{d})=0$. Let us now construct a sequence $(\mathfrak{d}_i)_{i\geq 1}$ of pairwise distinct boxes of $S$ such that $\mathfrak{d}_1=\mathfrak{d}$, $\theta(\mathfrak{d}_j)=0$ for all $j \geq 1$, and $\mathfrak{d}_j \in H_S(\mathfrak{d}_{j+1})$, and assume that the length $k$ of this sequence is maximal. By maximality, for all $\mathfrak{c}$ such that $\mathfrak{d}_k \in H_S(\mathfrak{c})$ we have $\theta(\mathfrak{c}) \neq 0$. Consequently, $\mathfrak{d}_k$ is erasable in $\T$, and this concludes the proof. 
\end{proof}

\begin{definition}[Filling process]
Let $\T \in \Typ(S)$ and $L=[\mathfrak{c}_1,\ldots,\mathfrak{c}_{|S|}]$ be a sequence of pairwise distinct boxes of $S$. We say that $L$ is \emph{filling sequence} of $\T$ if and only if there exists a sequence of types $(\T_i)_{1 \leq i \leq |S|}$ such that 
\begin{itemize}
\item $\T_1=\T$;
\item for all $1 \leq i \leq |S|$, $\mathfrak{c}_i$ is erasable in $\T_i$;
\item for all $1 \leq i < |S|$, $\T_{i+1}$ is obtained from $\T_i$ by suppressing $\mathfrak{c}_i$ in $\T_i$ and decreasing by one the valuation in each box $\mathfrak{d} \in S':=S\setminus \{c_1,\ldots,c_{i-1} \}$ such that $\mathfrak{c}_i \in H_{S'}(\mathfrak{d})$.
\end{itemize}
We associate each filling sequence $L=[\mathfrak{c}_1,\ldots,\mathfrak{c}_{|S|}]$ with a tableau $T_L=(t_{\mathfrak{c}})_{\mathfrak{c}}$ of shape $S$ defined by $t_{c_i}=i$ for all $1 \leq i \leq |S|$. 
\end{definition}

\begin{Remark}
Note that the filling process is a special instance of the \emph{peeling process} introduced in \cite{FV}.
\end{Remark}

\begin{proposition}
For any filling sequence $L$, we have $T_L \in \Tab(\T)$, and the map $L \mapsto T_L$ is a bijection. 
\end{proposition}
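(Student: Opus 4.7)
The plan is to maintain throughout the filling process a precise relation between each intermediate type $\T_i$ and the original type $\T$. The key invariant I would establish first, by a straightforward induction on $i$, is that for every box $\mathfrak{c} \in S_i := S \setminus \{c_1,\ldots,c_{i-1}\}$,
\[
\theta_i(\mathfrak{c}) = \theta(\mathfrak{c}) - |\{j < i : c_j \in H_S(\mathfrak{c})\}|.
\]
This follows from the identity $H_{S_i}(\mathfrak{c}) = H_S(\mathfrak{c}) \cap S_i$, which shows that the boxes whose valuation is decremented at step $i$ are precisely those $\mathfrak{c} \in S_{i+1}$ with $c_i \in H_S(\mathfrak{c})$.

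Granted the invariant, $T_L \in \Tab(\T)$ is immediate: applying it at step $i = k$ to the cell $\mathfrak{c} = c_k$, the erasability condition $\theta_k(c_k) = 0$ yields
\[
\theta(c_k) = |\{j < k : c_j \in H_S(c_k)\}| = |\{\mathfrak{d} \in H_S(c_k) : t_{\mathfrak{d}} < t_{c_k}\}|,
\]
which is the defining equation for the type of $T_L$. Injectivity of $L \mapsto T_L$ is obvious, since $c_i$ is recovered as the unique cell of $T_L$ containing the entry $i$.

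For surjectivity, given $T \in \Tab(\T)$ I would let $c_i$ be the cell carrying the entry $i$, and show that $L = [c_1,\ldots,c_{|S|}]$, together with the types $\T_i$ defined via the invariant, forms a filling sequence. The first erasability condition $\theta_i(c_i) = 0$ follows because the defining equation for $\T$ gives $\theta(c_i) = |\{\mathfrak{d} \in H_S(c_i) : t_{\mathfrak{d}} < t_{c_i}\}| = |\{j < i : c_j \in H_S(c_i)\}|$. For the second condition, any $c_j \in S_i \setminus \{c_i\}$ with $c_i \in H_{S_i}(c_j)$ satisfies $j > i$, and
\[
\theta_i(c_j) = |\{k < j : c_k \in H_S(c_j)\}| - |\{k < i : c_k \in H_S(c_j)\}| = |\{i \leq k < j : c_k \in H_S(c_j)\}| \geq 1,
\]
since this last set contains $k = i$. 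One must also check that each $\T_i$ is a legitimate type: the upper bound $\theta_i(\mathfrak{c}) \leq h_{S_i}(\mathfrak{c}) - 1$ is preserved because both quantities drop by one whenever a hook cell of $\mathfrak{c}$ is erased, and the lower bound $\theta_i(\mathfrak{c}) \geq 0$ is precisely what the erasability of the previously removed cells guarantees.

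The main (mild) obstacle is keeping the bookkeeping between $T$, $\T$, and the sequence $(\T_i)$ straight; once the invariant above is isolated, both directions of the bijection reduce to counting, cell by cell, how many entries of the hook carry a value smaller than the entry in the base cell.
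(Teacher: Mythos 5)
Your proof is correct, and it takes the only reasonable route: the paper itself dismisses this proposition with ``clear by induction on $|S|$'', and your invariant $\theta_i(\mathfrak{c}) = \theta(\mathfrak{c}) - |\{j < i : c_j \in H_S(\mathfrak{c})\}|$ is exactly the content that induction would carry. All the verifications (the type equation for $T_L$, both erasability conditions in the surjectivity direction, and the bounds $0 \leq \theta_i(\mathfrak{c}) \leq h_{S_i}(\mathfrak{c})-1$) check out, resting as you note on the identity $H_{S_i}(\mathfrak{c}) = H_S(\mathfrak{c}) \cap S_i$.
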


\begin{proof}
This is clear by induction on $|S|$.
\end{proof}

\begin{example} 
Consider the type $\T$ on the top-left of Figure~\ref{ExFilSeq} and the filling sequence $L=[(1,3);(1,1);(1,2);(2,2);(2,1)]$ (since we represent Ferrers diagrams with the English convention, we use the matrix coordinates for each box). The types on the top of the figure are the types obtained after each iteration of the filling process.
\begin{figure}[!h] 
\includegraphics[width=11.5cm]{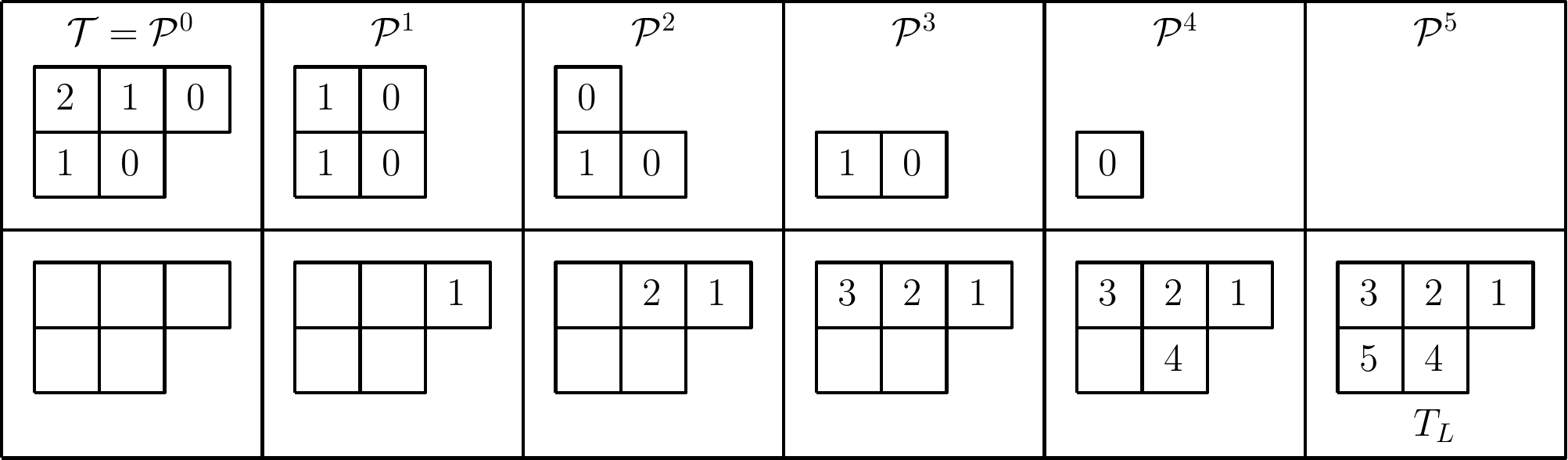}
\caption{}\label{ExFilSeq}
\end{figure}
\end{example}

Thanks to the previous proposition and Lemma~\ref{LemEras}, we also have the following result which concludes this section.

\begin{proposition}
For all $\T \in \Typ(S)$, we have $\Tab(\T) \neq \emptyset$.
\end{proposition}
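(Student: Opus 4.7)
The plan is to prove the proposition by induction on $|S|$, directly invoking the filling process together with Lemma~\ref{LemEras}. The base case $|S|=1$ is immediate: the unique box must have valuation $0$ (since $h_S(\mathfrak{c})=1$), and the tableau filled with the single entry $1$ belongs to $\Tab(\T)$.

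For the inductive step, assume the result holds for all diagrams of cardinality strictly less than $|S|$. Given $\T \in \Typ(S)$, Lemma~\ref{LemEras} supplies an erasable box $\mathfrak{c} \in S$. Let $S':=S\setminus\{\mathfrak{c}\}$ and let $\T'$ be obtained from $\T$ by deleting $\mathfrak{c}$ and decreasing by one the valuation of every $\mathfrak{d} \in S'$ with $\mathfrak{c} \in H_S(\mathfrak{d})$. The one technical point is to check that $\T' \in \Typ(S')$, i.e.\ that every valuation $\theta'(\mathfrak{d})$ lies in $[0, h_{S'}(\mathfrak{d})-1]$. For a box $\mathfrak{d}$ with $\mathfrak{c} \notin H_S(\mathfrak{d})$, one has $h_{S'}(\mathfrak{d})=h_S(\mathfrak{d})$ and $\theta'(\mathfrak{d})=\theta(\mathfrak{d})$, so the bounds carry over. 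For a box $\mathfrak{d} \neq \mathfrak{c}$ with $\mathfrak{c} \in H_S(\mathfrak{d})$, erasability of $\mathfrak{c}$ forces $\theta(\mathfrak{d})\neq 0$, so $\theta'(\mathfrak{d})=\theta(\mathfrak{d})-1 \geq 0$; on the other hand $h_{S'}(\mathfrak{d})=h_S(\mathfrak{d})-1$, hence $\theta'(\mathfrak{d}) \leq h_S(\mathfrak{d})-2 = h_{S'}(\mathfrak{d})-1$.

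Having established $\T' \in \Typ(S')$, the induction hypothesis yields a filling sequence $L'=[\mathfrak{c}_2,\ldots,\mathfrak{c}_{|S|}]$ of $\T'$. Prepending $\mathfrak{c}$ to $L'$ produces a sequence $L=[\mathfrak{c},\mathfrak{c}_2,\ldots,\mathfrak{c}_{|S|}]$ which, by construction, satisfies all three requirements of the filling process: the associated chain of types starts with $\T$, $\mathfrak{c}$ is erasable in $\T_1=\T$, and the tail is a filling sequence of $\T_2=\T'$. Thus $L$ is a filling sequence for $\T$, and the preceding proposition gives $T_L \in \Tab(\T)$, proving that $\Tab(\T)\neq\emptyset$.

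I expect no real obstacle here: the only substantive verification is the inequality $\theta'(\mathfrak{d}) \leq h_{S'}(\mathfrak{d})-1$, and this reduces to observing that removing an erased box from a hook lowers both the hook length and the relevant valuation by exactly one. Everything else is bookkeeping handled by Lemma~\ref{LemEras} and the bijection $L \mapsto T_L$.
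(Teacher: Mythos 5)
Your proof is correct and follows essentially the same route as the paper: there the proposition is deduced from Lemma~\ref{LemEras} together with the bijection between filling sequences and elements of $\Tab(\T)$, which is precisely the induction on $|S|$ you carry out. The only difference is that you spell out the verification that erasing an erasable box yields a genuine type of shape $S\setminus\{\mathfrak{c}\}$, a point the paper asserts as clear.
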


\subsection{Some results about the enumeration of $\Tab(\T)$}

Now that we have a classification of all the tableaux of a given shape according to their type and a way to construct all tableaux in a given class, the following natural questions arise. 

\begin{Question}\label{QuestEnum}
$~~$

\begin{enumerate}
\item Is it possible to find a formula to compute $|\Tab(\T)|$ For any type $\T$?
\item At least, can we exhibit some family of types for which the number of corresponding tableaux can be computed?
\end{enumerate}
\end{Question}

Even if the general case seems to be quite difficult, we have some basic properties in that direction that we now detail.
First, note that if $\lambda=(n)$ or $\lambda=(1^n)$, then for any $\T \in \Typ (\lambda)$ there exists a unique tableau of type $\T$. This is clear, since at each iteration of the filling process there is only one $(i,j)\in \lambda$ which is erasable. This basic fact leads us to our first enumerative proposition, generalizing Lemma~3.2 from \cite{EG}. 

\begin{proposition}
Let $k$ and $p$ be two integers and $\T$ be a type of shape $\lambda=(k,1^p)$. Then, we have
\[
|\Tab(\T)|=f^{\lambda}.
\]
\end{proposition}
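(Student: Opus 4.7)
The plan is to parametrize $\Tab(\T)$ explicitly. Set $n = k+p$ and write $a := \theta(1,1)$.

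First I observe that the corner value in any $T \in \Tab(\T)$ is forced by $\T$. Since $H_\lambda(1,1)$ is the entire diagram $\lambda$ and the filling is a bijection with $[n]$, we have
\[
a \;=\; \bigl|\{\mathfrak{d} \in H_\lambda(1,1) : t_\mathfrak{d} < t_{(1,1)}\}\bigr| \;=\; t_{(1,1)} - 1,
\]
so $t_{(1,1)} = a + 1$. Thus the values $[n] \setminus \{a+1\}$ must be distributed between the row $\{(1,2),\ldots,(1,k)\}$ and the column $\{(2,1),\ldots,(p+1,1)\}$.

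The key structural fact is that for $j \geq 2$ the hook $H_\lambda(1,j)$ lies entirely in the row, and for $i \geq 2$ the hook $H_\lambda(i,1)$ lies entirely in the column. Consequently, the restriction of $\T$ to the row (resp.\ column) is itself a valid type on a single row (resp.\ single column), since the relevant hook-length bounds coincide with those inside the smaller shape. The paper has already noted that in such one-dimensional cases each type admits a unique tableau. Therefore, once we choose the $(k-1)$-subset $R \subset [n] \setminus \{a+1\}$ of values to appear on the row (the remaining $p$ values forming the column set $C$), the entire arrangement is uniquely determined by the row and column type data. Conversely, every such choice produces a tableau of type $\T$, since the corner constraint $\theta(1,1) = a$ is automatic from $t_{(1,1)} = a+1$.

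Consequently $|\Tab(\T)|$ equals the number of $(k-1)$-subsets of an $(n-1)$-element set, namely $\binom{n-1}{k-1}$. Applying the hook-length formula to $\lambda = (k,1^p)$, whose hook lengths are $n$ at $(1,1)$, $k-j+1$ at $(1,j)$ for $j \geq 2$, and $p-i+2$ at $(i,1)$ for $i \geq 2$, one obtains
\[
f^\lambda \;=\; \frac{n!}{n\cdot (k-1)! \cdot p!} \;=\; \binom{n-1}{k-1},
\]
matching the count. No step of this plan seems to present a serious obstacle: the main conceptual point is the decoupling of the row and column made possible by the hook structure, combined with the uniqueness on rows and columns that the author has already recorded.
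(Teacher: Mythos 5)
Your proof is correct and takes essentially the same approach as the paper: both first observe that $t_{(1,1)}=\theta(1,1)+1$ is forced, then exploit the fact that the remaining hooks lie entirely in the arm or entirely in the leg, so that a tableau is determined by the choice of which values go to the row, giving $\binom{n-1}{k-1}$ interleavings. The only cosmetic difference is that the paper concludes by noting this count is independent of the type $\T$ (hence equals the standard-tableau count $f^{\lambda}$), whereas you evaluate both sides explicitly via the hook-length formula.
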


\begin{proof}
First, note that for any tableau $T=(t_{\mathfrak{c}})_{\mathfrak{c}\in \lambda}$ of type $\T$, we have $t_{1,1}=\theta(1,1)+1$ by definition. Thus, if we set $S:=\lambda\setminus \{ (1,1) \}$ (see Figure \ref{HookCase}) and $\T ' :=(G_S,\theta)$, then we have 
\[ |\Tab _{\lambda}(\T)|=|\Tab_{S}(\T')|. \]
\begin{figure}[!h]
\includegraphics[width=3.5cm]{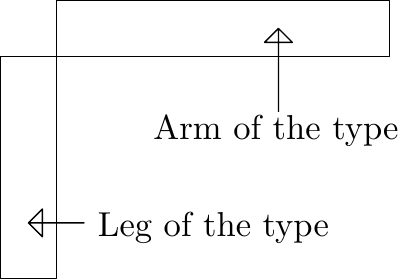}
\caption{}\label{HookCase}
\end{figure}
Moreover, when we perform the filling process on $\T'$, the only thing we have to chose at each step is an element in the leg or in the arm of $S$, and this is independent of the choice of $\T$. This concludes the proof.
\end{proof}

In general, finding an explicit formula for the number of tableaux of a given type $\T$ seems to be a quite complicated problem (note that an approach similar to that of \cite{HookProb,NPS} has not been tried yet). However, a probabilistic approach might be possible, as suggested by the following result.

\begin{proposition}
Let $S$ be a diagram, if we choose uniformly a type $\T$ in $\Typ(S)$, then the expected value for $| \Tab_{S}(\T)|$ is 
\[
\frac{n!}{\prod_{\mathfrak{c}\in S}h_{S}(\mathfrak{c})}.
\]
\end{proposition}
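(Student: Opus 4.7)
The plan is to recognize this as a straightforward double-counting statement. By definition of the expected value under the uniform distribution on $\Typ(S)$, the quantity to compute is
\[
E = \frac{1}{|\Typ(S)|} \sum_{\T \in \Typ(S)} |\Tab_S(\T)|,
\]
so I would attack the numerator and the denominator separately.

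For the denominator, I would observe directly from the definition of a type that choosing $\T \in \Typ(S)$ amounts to choosing, independently in each box $\mathfrak{c} \in S$, an integer in $\{0, 1, \ldots, h_S(\mathfrak{c})-1\}$. Hence
\[
|\Typ(S)| = \prod_{\mathfrak{c} \in S} h_S(\mathfrak{c}).
\]

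For the numerator, the key point is that Definition~\ref{DefTypTab} associates to every tableau $T$ of shape $S$ exactly one type, namely its type $\T(T)$. Consequently the family $\{\Tab_S(\T)\}_{\T \in \Typ(S)}$ is a partition of the set of all tableaux of shape $S$ (possibly with empty parts, although we have already noted that in fact no part is empty). Since a tableau of shape $S$ is by definition a bijection between $S$ and $[n]$, the total number of tableaux of shape $S$ is $n!$, whence
\[
\sum_{\T \in \Typ(S)} |\Tab_S(\T)| = n!.
\]

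Combining the two computations gives $E = n!/\prod_{\mathfrak{c}\in S} h_S(\mathfrak{c})$, which is the claimed formula. There is no real obstacle here: the argument is a one-line double count, and the only thing to double-check is that the product over $\mathfrak{c}\in S$ really enumerates types (which is immediate because the bound $\theta(\mathfrak{c}) \leq h_S(\mathfrak{c})-1$ is imposed box-by-box with no interaction between boxes). Note also that this formula is a pleasant sanity check: for $S=\lambda$ a partition, the hook-length formula reads $f^{\lambda} = n!/\prod_{\mathfrak{c}\in \lambda} h_\lambda(\mathfrak{c})$, so the expected value of $|\Tab(\T)|$ coincides with $|\mathrm{SYT}(\lambda)|$, consistent with Theorem~\ref{TheoEGBal}.
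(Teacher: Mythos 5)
Your proof is correct and follows exactly the same route as the paper's: count $|\Typ(S)|=\prod_{\mathfrak{c}\in S}h_S(\mathfrak{c})$ box-by-box, note that the sets $\Tab_S(\T)$ partition the $n!$ tableaux of shape $S$ since each tableau has a unique type, and divide. The only difference is that you spell out the double count slightly more explicitly than the paper does.
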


\begin{proof}
Set $H_{S}= \prod_{\mathfrak{c}\in \lambda}h_{S}(\mathfrak{c})$, it is clear that the number of types of shape $S$ is precisely $H_S$. Then, because of the uniform choice, the probability for a type $\T$ to be chosen is exactly $\frac{1}{H_S}$. Thus, the expected value for $|\Tab(\T)|$ is 
\[ \frac{\sum_{\T \in \Typ(S)} |\Tab_{S}(\T)|}{H_{S}}, \]
and the numerator clearly equals $n!$. The result follows.
\end{proof}

This last proposition leads us to the following natural question.

\begin{Question}
Is it possible to find an explicit formula for the variance ?
\end{Question}

This last question is open, and it seems that the value of the variance heavily depends on the shape of the considered diagram: for instance, some tests suggest that the variance is maximal when we consider a square shape.

\section{Types and reduced decompositions of permutations}

In this section, our objective is twofold: we give a positive answer to Question~\ref{QuestEnum}~(2), and we provide a wide generalization of Theorem~\ref{TheoEGBal}. More precisely, we explain how to associate each vexillary permutation $\sigma$ with a type $\T_{\sigma}^E$ of shape $\lambda_{\sigma}$ (we will explain in Section~\ref{SectionExchange} what the index ``$E$'' stands for) such that: 
\begin{itemize}
\item $|\Tab(\T_{\sigma}^E)|=f^{\lambda(\sigma)}$;
\item the map $\sigma \mapsto \T_{\sigma}^E$ is injective up-to an explicit and simple equivalence relation (see Section~\ref{SecEquivRelat});
\item elements of $\Tab(\T_{\sigma}^E)$ share many combinatorial properties similar to those of balanced tableaux (see Section~\ref{SecCombinProp}).
\end{itemize} 

\subsection{Type associated with a permutation}\label{SubSecTypPerm}

We first explain how one can associate any permutation $\sigma$ with a type $\T_{\sigma}$ such that $|\Tab(\T_{\sigma})|=|\Red(\sigma)|$, using the results from \cite{FV}.

Let $\lambda_n$ denote the \emph{staircase partition} $(n-1,n-2,\ldots,1)$. We identify the Ferrers diagram of $\lambda_n$ with the set $\{(a,b) \in \mathbb{N}| 1 \leq a<b \leq n\}$, by choosing new coordinates for each box of $\lambda_n$, as depicted on Figure~\ref{FigTypSym}. Thanks to these coordinates, we associate to each box $(a,b) \in \lambda_n$ the integer $\theta(a,b)=b-a-1$, and this defines a type $\mathcal{A}_n$ of shape $\lambda_n$, as depicted on the right of Figure~\ref{FigTypSym}.

\begin{figure}[!h] 
\includegraphics[width=9cm]{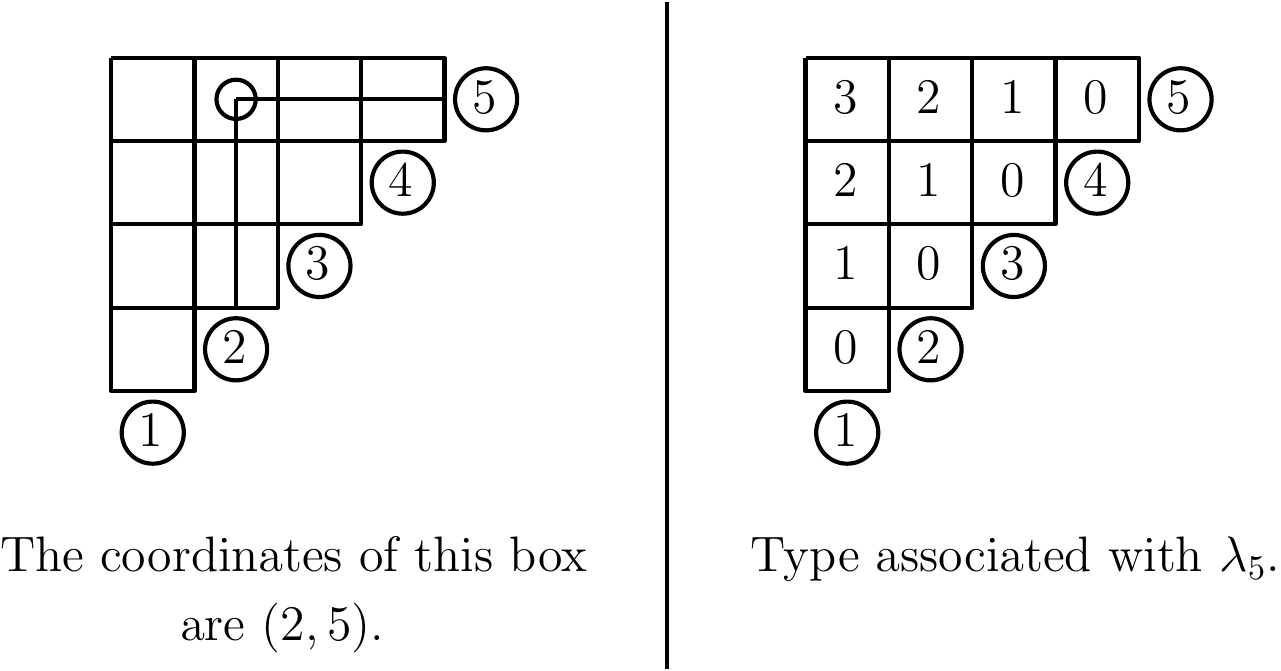}
\caption{Representation of the type $\mathcal{A}_n$.}\label{FigTypSym}
\end{figure}

Using the type $\A_n$, we can associate each permutation with a type, thanks to the following definition and proposition.

\begin{definition}
We denote by $\T_{\sigma}$ the sub-diagram of $\lambda_n$ made of the boxes whose coordinates are the elements of $\Inv(\sigma)$, and such that each box $\mathfrak{c} \in \Inv(\sigma)$ is filled with the integer $\theta(\mathfrak{c})$ coming from the definition of $\A_n$.
\end{definition}

\begin{proposition}
Let $\sigma \in S_n$. Then, $\T_{\sigma}$ is a type and we have 
$$|\Tab(\T_{\sigma})|=|\Red(\sigma)|. $$
\end{proposition}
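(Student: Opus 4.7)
The plan is to verify the two assertions of the proposition in turn: that $\T_\sigma$ satisfies the definition of a type, and that its tableaux are equinumerous with $\Red(\sigma)$. The lower bound $0 \leq b-a-1$ of the type condition is immediate from $a < b$, so the first task is the upper bound $b-a-1 \leq h_{\T_\sigma}(a,b)-1$. For this I would exploit the standard biclosedness of $\Inv(\sigma)$: whenever $(a,b) \in \Inv(\sigma)$ and $a < c < b$, at least one of $(a,c)$ or $(c,b)$ must also lie in $\Inv(\sigma)$, since otherwise the chain $\sigma^{-1}(a) \leq \sigma^{-1}(c) \leq \sigma^{-1}(b)$ would contradict $(a,b) \in \Inv(\sigma)$. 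A careful accounting of how these forced inversions contribute to the leg and the arm of $(a,b)$ inside $\T_\sigma$ should yield $h_{\T_\sigma}(a,b) \geq b-a$, which is precisely the required bound on $\theta(a,b)$.

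For the cardinality statement, I would rely on the bijection $L \mapsto T_L$ between filling sequences of $\T_\sigma$ and elements of $\Tab(\T_\sigma)$ established in Section~\ref{SecDefTypes}, and then exhibit a bijection between filling sequences and maximal chains in $[Id,\sigma]_{\leq_R}$ (which are themselves in natural bijection with $\Red(\sigma)$). The approach is by induction on $\ell(\sigma)$: an erasable box $\mathfrak{c} = (a,b)$ of $\T_\sigma$ should correspond to a cover $\omega \lhd_R \sigma$, after which the type obtained from $\T_\sigma$ by erasing $\mathfrak{c}$ and applying the prescribed decrements should encode the filling sequences of $\T_\omega$ (possibly after a suitable re-identification of its boxes with the inversions of $\omega$).

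The hard part is that the decrement rule does not literally reproduce $\T_\omega$ in general: erasing a box shifts $\theta$-values in a way that breaks the uniform pattern $b'-a'-1$. Bridging this gap is exactly the role played by the peeling-process machinery of \cite{FV}, which the excerpt already signals as the underlying source of the filling process. Invoking that framework should translate the local erasability conditions into Coxeter-theoretic covers and close the induction, ultimately producing the bijection between $\Tab(\T_\sigma)$ and $\Red(\sigma)$.
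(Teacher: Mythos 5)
The paper's own proof of this proposition is a single sentence: it is declared to be an immediate reformulation of \cite[Theorem~4.1]{FV}. So everything you are trying to supply is precisely the content hidden in that citation, and the useful question is whether your unpacking of it would actually go through. Your first half does: under the identification of $\lambda_n$ with $\{(a,b)\mid 1\le a<b\le n\}$, the hook of $(a,b)$ inside $\Sh(\T_{\sigma})$ consists of $(a,b)$ together with those $(a,c)$ and $(c,b)$, $a<c<b$, lying in $\Inv(\sigma)$, and the coclosedness of inversion sets (for each such $c$ at least one of $(a,c),(c,b)$ is in $\Inv(\sigma)$) gives $h_{\Sh(\T_{\sigma})}(a,b)\ge b-a=\theta(a,b)+1$, which is exactly the type condition.

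Your second half, as organized, would not close. An erasable box of $\T_{\sigma}$ receives the entry $1$, so it is a box $(a,a+1)$ with $\Inv(s_a)=\{(a,a+1)\}\subseteq\Inv(\sigma)$: it corresponds to an \emph{atom} of $[Id,\sigma]_{\leq_R}$, not to a cover $\omega\lhd_R\sigma$; the filling process peels the chain from the bottom. Consequently the residual type after one step governs the upper interval $[s_a,\sigma]$ and, as you suspect, it is not $\T_{\omega}$ for any shorter permutation $\omega$ under any relabelling of boxes (the decrements hit exactly the boxes whose hook contains $(a,a+1)$, whereas the inversion set of the quotient permutation is a genuinely different subset with the rigid values $b'-a'-1$). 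So an induction of the form ``erase, re-identify with $\T_{\omega}$, recurse'' has no correct base to recurse on. The argument that does work, and is what \cite[Theorem~4.1]{FV} packages, avoids relabelling entirely: show that an ordering $\mathfrak{c}_1,\ldots,\mathfrak{c}_{\ell}$ of $\Inv(\sigma)$ yields a tableau of type $\T_{\sigma}$ if and only if every initial segment $\{\mathfrak{c}_1,\ldots,\mathfrak{c}_i\}$ is closed and coclosed, hence an inversion set. Indeed, for $\mathfrak{c}_i=(a,b)$ and each $a<c<b$, coclosedness of $\{\mathfrak{c}_1,\ldots,\mathfrak{c}_i\}$ forces at least one of $(a,c),(c,b)$ to appear among $\mathfrak{c}_1,\ldots,\mathfrak{c}_{i-1}$, while closedness of $\{\mathfrak{c}_1,\ldots,\mathfrak{c}_{i-1}\}$ forbids both, so exactly $b-a-1$ hook entries precede $(a,b)$; the converse uses the same dichotomy. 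This identifies $\Tab(\T_{\sigma})$ with the maximal chains of $[Id,\sigma]_{\leq_R}$, hence with $\Red(\sigma)$. With that reorganization your outline becomes a correct expansion of the paper's citation; as written, the inductive step is the missing (and unfixable in that form) piece.
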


\begin{proof}
This is an immediate reformulation of \cite[Theorem~4.1]{FV}.
\end{proof}

\begin{Remark}
Notice that it is possible to construct a one-to-one correspondence between elements of $\Tab(\T_{\sigma})$ and balanced tableaux of shape $D(\sigma)$, by swapping positions of some rows in elements of $\Tab(\T_{\sigma})$. Therefore, it is possible to reformulate what follows in terms of Rothe diagram and their labbelings. However,  using Rothe diagrams instead of types $\T_{\sigma}$ would require to rewrite most of the results from \cite{FV}, without major modification. Furthermore, the use of the types $\T_{\sigma}$ seems to be more natural, since they have similar properties as those of Rothe diagrams, and provide a natural description of the weak order and its combinatorics.
\end{Remark}

\subsection{A transformation on types}

At this point, we already have a way to associate each vexillary permutation $\sigma$ with a type $\T_{\sigma}$ such that $|\Tab(\T_{\sigma})|=f^{\lambda(\sigma)}$. However, in general the shape of $\T_{\sigma}$ is not the Ferrers diagram of $\lambda(\sigma)$. In the sequel, we introduce a combinatorial transformation on types that will allow us to turn $\T_{\sigma}$ into a type of shape $\lambda(\sigma)$ whenever $\sigma$ is vexillary.

Let us begin this section with introducing two notations.

\begin{definition}
Let $S$ be a diagram and $``a"$ (resp. $``b"$) be a row (resp. a column) of $S$. We denote by $S\! \! \downarrow _a$ (resp. $\overrightarrow{S}^b$) the diagram obtained by swapping rows $a$ and $a+1$ (resp. columns $b$ and $b+1$) of $S$.
\end{definition}

\begin{definition}
 Let $T$ be a tableau of shape $S$ and $a$ (resp. $b$) a row (resp. a column) of $S$. We denote by $T \! \! \downarrow _a$ (resp. $\overrightarrow{T}^b$) the tableau of shape $S\! \! \downarrow _a$ (resp. $\overrightarrow{S}^b$) obtained from $T$ by exchanging rows $a$ and $a+1$ (resp. columns $b$ and $b+1$).
\end{definition}

Let us consider a type $\T$ of shape $S$ and let $a$ be the index of a row of $S$. In general, the set 
$A=\{T \! \! \downarrow _a \ | \ T \in \Tab(\T) \}$
does not correspond to a class of our classification. That is, in general there is no type $\T'$ of shape $S\! \! \downarrow _a$ such that $A=\Tab(\T')$. However, we will prove in the sequel of this section that such a type $\T'$ exists in a specific case.

\begin{definition}
Let $S$ be a diagram, $\T\in \Typ(S)$ and $``a"$ be the index of a row of $\T$. We say that the row $a$ is \emph{dominant} if and only if 
\begin{itemize}
\item for all $(a,y) \in \mathbb{N} \times \mathbb{N}$, if $(a,y) \in S$, then $(a+1,y) \in S$;
\item  for all $(a,y) \in S$ we have $\theta(a,y)>\theta(a+1,y)$.
\end{itemize}
We have a similar definition of \emph{dominant column} (see Figure~\ref{FigExampleDomin}) for a graphical representation of these two notions).
\end{definition}

\begin{figure}[!h]
\includegraphics[width=8cm]{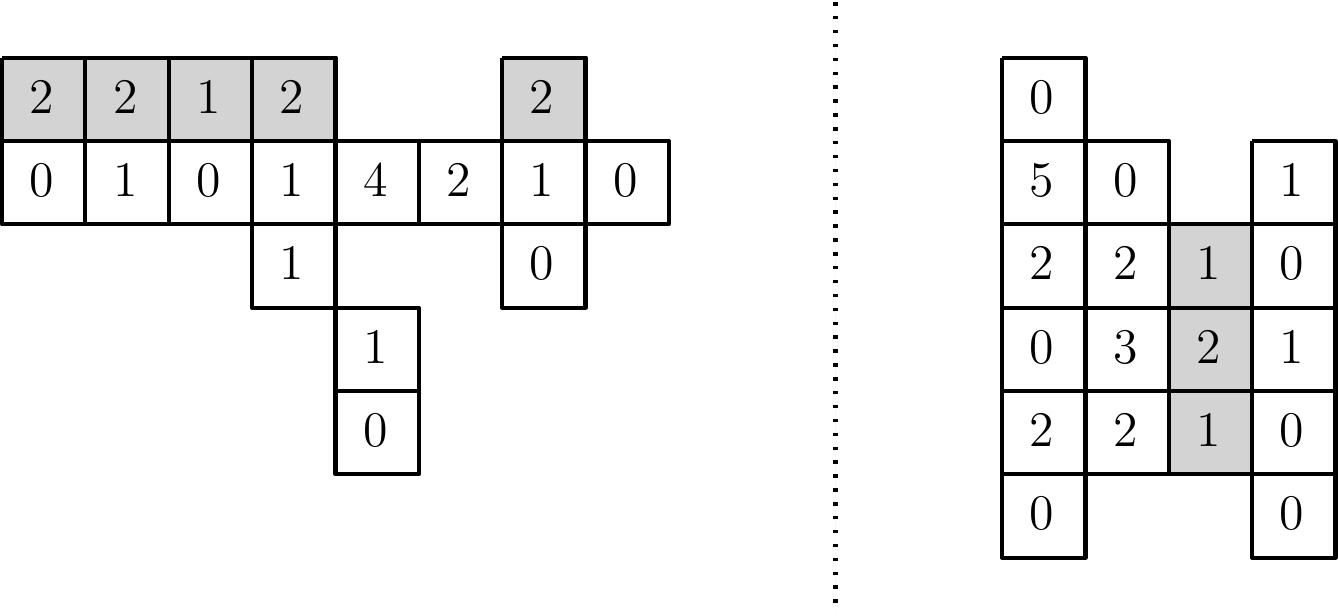}
\caption{A dominant row (on the left) and a dominant column (on the right).}\label{FigExampleDomin}
\end{figure}

Before moving to the combinatorial study of the types having a dominant row or column, let us introduce one last notation.

\begin{definition}
Let $\T$ be a type of shape $S$ and $``a"$ be the index of a dominant row of $\T$. We denote by  $\T \! \! \downarrow _a$ the type of shape $S \! \! \downarrow _a$ being obtained from $\T$ by first decreasing by one all the integers in the row $a$ of $\T$, then by swapping rows $a$ and $a+1$ (resp. columns $b$ and $b+1$) of $\T$, and keeping all other entries unchanged (see Figure \ref{ExchExpClear}).
\end{definition}

\begin{figure}[!h]
\includegraphics[width=10cm]{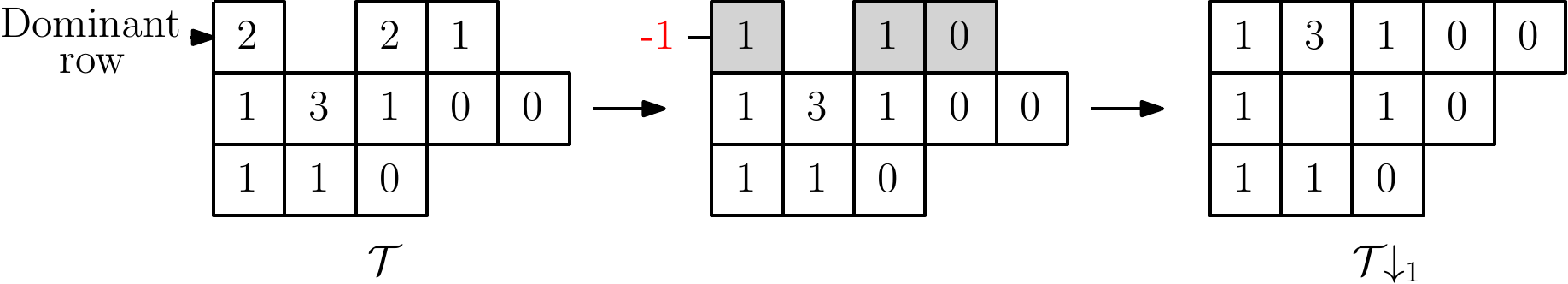}
\caption{From $\T$ to $\T \!\! \downarrow _1$.}\label{ExchExpClear}
\end{figure}

Our aim is now to prove that for any dominant row $a$ of a type $\T$ we have
\begin{equation}\label{EqPropDomExchange}
\{ T \! \! \downarrow _a \ | \ T \in \Tab(\T) \}=\Tab(\T \! \! \downarrow _a).
\end{equation}
For that purpose, we first prove a technical lemma.

\begin{lemma}\label{StepEx}
Let $\T=(G_S,\theta) \in \Typ(S)$ and $a$ be the index of a dominant row of $\T$. Then, for any $T=(t_{c}) \in \Tab(\T)$, we have $t_{a,y}>t_{a+1,y}$ for all $(a,y) \in S$.
\end{lemma}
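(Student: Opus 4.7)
The plan is to work directly with the filling process and establish the stronger statement: in every filling sequence of $\T$, each box $(a+1,y)$ with $(a,y)\in S$ is erased strictly before $(a,y)$. Since the associated tableau $T_L$ assigns to each box its erasure step, this gives $t_{a+1,y}<t_{a,y}$ for all $(a,y)\in S$, and since any $T\in\Tab(\T)$ arises from such a filling sequence, the lemma follows. Note that dominance makes the claim well-posed, because $(a,y)\in S$ forces $(a+1,y)\in S$.

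The argument will be by induction on the step index $i$, with inductive hypothesis that during the first $i-1$ erasures no box of row $a$ is removed before its partner in row $a+1$. For the inductive step I would suppose for contradiction that $\mathfrak{c}_i=(a,y_0)$ while $(a+1,y_0)\in S_i$. Writing $\theta_j(\mathfrak{c})$ for the entry of $\T_j$ at $\mathfrak{c}$, a direct bookkeeping gives
\[
\theta_j(\mathfrak{c})=\theta_0(\mathfrak{c})-\#\bigl\{\mathfrak{d}\in H_S(\mathfrak{c})\setminus\{\mathfrak{c}\} : \mathfrak{d}\text{ erased by step }j-1\bigr\}
\]
for every $\mathfrak{c}\in S_j$, and I would apply this at $j=i$ to both $\mathfrak{c}=(a,y_0)$ and $\mathfrak{c}=(a+1,y_0)$. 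The portions of $H_S(a,y_0)$ and $H_S(a+1,y_0)$ lying in column $y_0$ differ only by $(a+1,y_0)$ itself, which is unerased by hypothesis, so the erased leg contributions coincide; call their common size $m$. Denote the erased arm contributions by $A:=\#\{k>y_0:(a,k)\in S\text{ erased by step }i-1\}$ and $A':=\#\{k>y_0:(a+1,k)\in S\text{ erased by step }i-1\}$.

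The crux is to combine two inputs. Dominance of row $a$ gives $\theta_0(a,y_0)\geq\theta_0(a+1,y_0)+1$ and, for each $k$ contributing to $A$, the membership $(a+1,k)\in S$; the inductive hypothesis then upgrades the latter into the fact that $(a+1,k)$ was erased before $(a,k)$, yielding $A'\geq A$. Erasability of $(a,y_0)$ at step $i$ gives $\theta_i(a,y_0)=0$, that is $\theta_0(a,y_0)=m+A$. Plugging into the identity for $(a+1,y_0)$ produces
\[
\theta_i(a+1,y_0)=\theta_0(a+1,y_0)-m-A'\leq(\theta_0(a,y_0)-1)-m-A=-1,
\]
contradicting $\theta\geq 0$, and the induction closes.

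The main obstacle I would watch for is resisting the temptation to propagate the pointwise inequality $\theta_i(a,y)>\theta_i(a+1,y)$ itself: this quantity is not manifestly preserved, since erasing an arm box $(a,k)$ decreases $\theta(a,y)$ without touching $\theta(a+1,y)$. Switching to the ordering invariant on erasure times sidesteps the issue, because it interacts with dominance through the inclusion $(a,k)\in S\Rightarrow(a+1,k)\in S$ in a clean, monotone way.
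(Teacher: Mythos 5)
Your proof is correct, but it takes a genuinely different route from the paper's. The paper argues statically on a completed tableau $T\in\Tab(\T)$: assuming some $t_{a,y}\leq t_{a+1,y}$, it picks $y$ \emph{maximal} with this property and builds an injection from $\{\mathfrak{d}\in H_S(a,y):t_{\mathfrak{d}}<t_{a,y}\}$ into $\{\mathfrak{d}\in H_S(a+1,y):t_{\mathfrak{d}}<t_{a+1,y}\}$ (leg elements map to themselves, an arm element $(a,z)$ maps to $(a+1,z)$ using maximality of $y$), which forces $\theta(a,y)\leq\theta(a+1,y)$ and contradicts dominance in one stroke. You instead run a dynamic induction on the peeling/filling process, replacing the extremal choice of column by an inductive hypothesis on erasure times, and you derive the contradiction from the nonnegativity of the intermediate types via the bookkeeping identity for $\theta_j$. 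Both proofs ultimately compare the two hooks' contributions and invoke $\theta_0(a,y)>\theta_0(a+1,y)$; the correspondence is essentially leg-erased-count $m$ playing the role of the paper's leg injection and the inequality $A'\geq A$ playing the role of the arm injection. The paper's version is shorter and needs no machinery beyond the definition of type; yours requires first knowing that every element of $\Tab(\T)$ arises from a filling sequence and that all intermediate $\T_i$ are types (both established in the paper, so this is legitimate), but in exchange it yields the slightly stronger and more algorithmic statement that in \emph{every} filling sequence the box $(a+1,y)$ is peeled before $(a,y)$, which meshes naturally with the way the exchange algorithm is used later. One small point worth making explicit when you write this up: the inductive hypothesis must be applied to conclude that for each arm box $(a,k)$ erased by step $i-1$, its partner $(a+1,k)$ was erased \emph{at an earlier step}, hence also by step $i-1$; you do say this, and it is exactly where the induction is consumed, so the argument closes correctly.
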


\begin{proof}
Let $T=(t_{(x,y)})_{(x,y) \in S} \in \Tab(\T)$, and assume by contradiction that the lemma is not true and consider $y$ maximal such that $t_{(a,y)}\leq t_{(a+1,y)}$. Let $\mathfrak{c} \in H_{S}(a,y) \setminus \{ (a,y) \}$ such that $t_{\mathfrak{c}}<t_{(a,y)}$, and let us split our study into two cases. 
\begin{itemize}
\item If $\mathfrak{c} \in L_S(a,y)$, then we have that $\mathfrak{c} \in H_S(a+1,y)$ and $t_{\mathfrak{c}}<t_{(a,y)} \leq t_{(a+1,y)}$.
\item If $\mathfrak{c} \in A_S(a,y)$, then there exists $z > y$ such that $\mathfrak{c}=(a,z)$, and we have by maximality of $y$
\[
t_{(a+1,y)} \geq t_{(a,y)} > t_{(a,z)} > t_{(a+1,z)}.
\]
\end{itemize}  
This is enough to show that $\theta(a,y) \leq \theta(a+1,y)$, and this contradicts the fact that $a$ is dominant. This concludes the proof.
\end{proof}

We now prove that \eqref{EqPropDomExchange} holds.

\begin{proposition}[Exchange property]\label{LiEx}
Let $\T$ be a type and $``a"$ (resp. $b$) be a dominant row (resp. column) of $\T$. Then, the map $T \mapsto T \! \! \downarrow_a$ (resp. $T \mapsto \overrightarrow{T}^b$) is a bijection between $Tab(\T)$ and $Tab(\T\! \! \downarrow _a)$ (resp. $\Tab(\overrightarrow{\T}^b)$).
\end{proposition}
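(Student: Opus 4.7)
The plan is to prove the row statement; the column case is handled by the same argument with rows and columns interchanged. Since the row swap is an involution on tableaux, it suffices to show both that $T\in\Tab(\T)$ implies $T\downarrow_a\in\Tab(\T\downarrow_a)$ and that $U\in\Tab(\T\downarrow_a)$ implies $U\downarrow_a\in\Tab(\T)$; combined with the trivial identity $(T\downarrow_a)\downarrow_a=T$, these two inclusions furnish mutually inverse bijections.

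For the first inclusion, I would fix $T\in\Tab(\T)$ and compute the type of $T\downarrow_a$ cell by cell, splitting into three cases. For a cell $(x,y)$ with $x\notin\{a,a+1\}$, dominance (and in particular the implication $(a,y)\in S\Rightarrow(a+1,y)\in S$) ensures that the multiset of values occupying $H_{S\downarrow_a}(x,y)$ equals that occupying $H_S(x,y)$, so the type at $(x,y)$ is preserved. For a cell $(a,y)\in S\downarrow_a$, the hook $H_{S\downarrow_a}(a,y)$ is in natural bijection with $H_S(a+1,y)$ via $(a,y')\leftrightarrow(a+1,y')$ on the arm and the identity on the leg cells in rows $>a+1$, except for the possible extra cell $(a+1,y)\in S\downarrow_a$ appearing exactly when $(a,y)\in S$; Lemma~\ref{StepEx} applied to $T$ yields $t_{a,y}>t_{a+1,y}=(t\downarrow_a)_{a,y}$, so this extra cell does not contribute to the type value, giving $\theta(a+1,y)=(\theta\downarrow_a)(a,y)$. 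For a cell $(a+1,y)\in S\downarrow_a$, a symmetric analysis shows $H_{S\downarrow_a}(a+1,y)$ is obtained from $H_S(a,y)$ by removing exactly the cell $(a+1,y)\in S$, which by Lemma~\ref{StepEx} contributed to $\theta(a,y)$, giving the type value $\theta(a,y)-1=(\theta\downarrow_a)(a+1,y)$.

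For the second inclusion, I would first establish a dual form of Lemma~\ref{StepEx}: for every $U\in\Tab(\T\downarrow_a)$ and every $(a+1,y)\in S\downarrow_a$, one has $u_{a,y}<u_{a+1,y}$. The proof is by contradiction: take $y$ maximal with $u_{a,y}>u_{a+1,y}$ and build an injection from $\{c\in H_{S\downarrow_a}(a+1,y):u_c<u_{a+1,y}\}$ into $\{d\in H_{S\downarrow_a}(a,y):u_d<u_{a,y}\}$ by sending arm cells $(a+1,y')$ to $(a,y')$ (the maximality of $y$ gives the chain $u_{a,y'}<u_{a+1,y'}<u_{a+1,y}<u_{a,y}$) and leg cells in rows $x'>a+1$ to themselves (using $u_{a+1,y}<u_{a,y}$). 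The cell $(a+1,y)$ lies in the target set but has no preimage in the image, forcing $(\theta\downarrow_a)(a,y)\geq(\theta\downarrow_a)(a+1,y)+1$, that is, $\theta(a+1,y)\geq\theta(a,y)$, which contradicts dominance. With this dual lemma in hand, the case analysis of the first inclusion carries over verbatim to show $U\downarrow_a\in\Tab(\T)$.

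The main obstacle is this dual of Lemma~\ref{StepEx}: after applying $\downarrow_a$ the inequality between row $a+1$ and row $a$ of $\T\downarrow_a$ is only weak (not strict, unlike the dominance of row $a$ in $\T$), so the strictness needed to manufacture a contradiction has to be extracted from the single uncovered target cell $(a+1,y)$ of the injection — this uncovered cell is precisely the quantitative remnant of the strict inequality $\theta(a,y)>\theta(a+1,y)$ in the original dominance hypothesis.
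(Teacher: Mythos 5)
Your proof is correct and follows essentially the same route as the paper's: the same reduction via the involution $(T\!\downarrow_a)\!\downarrow_a=T$, the same three-case comparison of hooks, and the same use of Lemma~\ref{StepEx} for the forward inclusion. The only difference is that you explicitly prove the dual of Lemma~\ref{StepEx} (extracting strictness from the uncovered leg cell $(a+1,y)$, since the dethroned inequality is only weak), a detail the paper dismisses with ``similar arguments.''
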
  

\begin{proof}
Let $T \in \Tab(\T)$, and denote by $\T'=(\theta'(\mathfrak{c}))_{\mathfrak{c} \in S \! \downarrow_a}$ the type of the tableau $T':= T\! \!  \downarrow_a=(t_{x,y}')$. We will prove that $\T'=\T \! \! \downarrow_a$.

Let $(x,y)$ be a box of $S \! \! \downarrow_a$ and let us define the following set
$$H_{x,y}(T):=\{ t_{a,b} \ | \ (a,b) \in H_{x,y}(\Sh (T) ) \ \}. $$ 
We split our study into three cases.
\begin{itemize}
\item If $x \notin  \{a,a+1\}$, then we have $H_{x,y}(T)=H_{x,y}(T')$, so that $\theta'(x,y)=\theta(x,y)$.
\item If $x=a$, then we have $H_{a,y}(T')=H_{a+1,y}(T) \cup  \{ t_{a,y} \}$. However, by Lemma \ref{StepEx} we have $t'_{a,y}=t_{a+1,y} < t_{a,y}$, so that $\theta'(a,y)=\theta(a+1,y)$.
\item If $x=a+1$, then we have $H_{a+1,y}(T')=H_{a,y}(T) \setminus  \{ t_{a+1,y} \}$, so that $\theta'_{a+1,y}=\theta_{a,y}-1$.
\end{itemize}
Then, we have $\T'=\T\! \! \downarrow _a$, hence $T \mapsto T \! \! \downarrow_a$ send an element of $\Tab_{S}(\T)$ to an element of $\Tab_{S \! \downarrow_a}(\T\! \! \downarrow _a)$. Similar arguments show that $T \mapsto T \! \! \downarrow_a$ also sends an element of $\Tab_{S \! \downarrow_a}(\T\! \! \downarrow _a)$ to an element of $\Tab_{S}(\T)$, but $\downarrow_a$ is an involution, so that it is 	 bijection. This concludes the proof for rows. The proof of the same property for columns is similar.
\end{proof}

We finish this section with a useful definition.

\begin{definition}
Let $\T$ be a type of shape $S$ and $a$ be the index of a row of $\T$. The row $a$ is called \emph{dethroned} if and only if
\begin{itemize}
\item for all $(a,y) \in \mathbb{N} \times \mathbb{N}$, if $(a,y) \in S$, then $(a-1,y) \in S$;
\item for all $(a,y) \in S$ we have $\theta(a-1,y)\leq \theta(a,y)$.
\end{itemize} 
We have a similar notion of \emph{dethroned column}.
\end{definition}

Obviously, if $a$ is a dominant row of $\T$, then $a+1$ is a dethroned line of $\T\! \! \downarrow _a$ and conversely. The same holds for dominant columns. If $a+1$ is a dethroned line of $\T$, we denote by $\T \! \! \uparrow_{a+1}$ the unique type such that $(\T \! \! \uparrow_{a+1}) \! \! \downarrow_a=\T$.

\subsection{The exchange algorithm}\label{SectionExchange}

In this section, we explain how one can turn the type $\T_{\sigma}$ (where $\sigma \in S_n$ is vexillary) into a type of shape $\lambda(\sigma)$ using recursively Proposition~\ref{LiEx} on lines and columns.

\begin{definition}[Line-exchange algorithm]
Let $\T$ be a type of shape $S$, the \emph{line-exchange algorithm} is the algorithm described below.
\begin{enumerate}
\item Erase all the empty rows of $\T$.
\item Set $i:=1$.
\begin{enumerate}
\item If $i$ is a dominant row of $\T$, then set $\T:=\T\! \! \downarrow_i$ and go back to step (2). Otherwise, go to step (2-b).
\item If there is no row below $i$, then the algorithm stops. Otherwise, set $i:=i+1$ and go back to step (2-a).
\end{enumerate}
\end{enumerate}
We denote by $\T^L$ the type obtained after we perform the line-exchange algorithm.
\end{definition}

There is an obvious analogous \emph{column-exchange algorithm}, and we denote by $\T^C$ the type obtained after we perform this algorithm on a type $\T$.

\begin{lemma}
For any type $\T$, we have 
\[ 
|\Tab(\T)|=|\Tab(\T^L)|=|\Tab(\T^C)|.
\]
\end{lemma}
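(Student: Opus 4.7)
The plan is to unfold the line-exchange algorithm as a finite composition of two atomic operations on types, and to observe that each atomic operation preserves the cardinality of $\Tab$. The two atomic operations are: (i) the deletion of an empty row from $S$ (step (1) of the algorithm), and (ii) the replacement of $\T$ by $\T\!\!\downarrow_a$ for some index $a$ of a dominant row (the update performed in step (2-a)). Since $\T^L$ is, by definition, the result of iterating these two operations starting from $\T$, it will suffice to show that each one preserves $|\Tab(\cdot)|$.

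For operation (ii), this is exactly the content of Proposition~\ref{LiEx}: the map $T\mapsto T\!\!\downarrow_a$ is a bijection from $\Tab(\T)$ to $\Tab(\T\!\!\downarrow_a)$, so the cardinality is preserved. For operation (i), one observes that if the row of index $a$ of $S$ is empty, then the diagrams $S$ and $S'$ (obtained from $S$ by deleting this empty row and relabelling row indices above $a$) contain the same boxes up to a relabelling of the first coordinate; under this relabelling, $L_S$, $A_S$ and $H_S$ are transported to $L_{S'}$, $A_{S'}$ and $H_{S'}$ respectively (the definitions of $L_S(a,b)$ and $A_S(a,b)$ depend only on which boxes belong to $S$, not on gaps in row indices). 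Hence the identification gives a canonical bijection between $\Tab_S(\T)$ and $\Tab_{S'}(\T')$, where $\T'$ is the obvious transport of $\T$. Composing all these bijections yields $|\Tab(\T)|=|\Tab(\T^L)|$. The argument for columns is completely symmetric, using the column part of Proposition~\ref{LiEx}, which gives $|\Tab(\T)|=|\Tab(\T^C)|$.

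The only point that requires a small additional argument is that the line-exchange algorithm actually terminates (so that $\T^L$ is well-defined). This is the main obstacle, since after applying $\downarrow_i$ one resets $i$ to $1$. I would resolve it using the potential function $\Phi(\T):=\sum_{\mathfrak{c}\in S}\theta(\mathfrak{c})\in\mathbb{Z}_{\geq 0}$. Each application of $\downarrow_a$ decreases exactly the entries of row $a$ by $1$ and then swaps rows $a$ and $a+1$, so $\Phi$ drops by the number of boxes in row $a$; after step (1) has erased empty rows, every row that can become dominant has at least one box, so $\Phi$ strictly decreases at each call of step (2-a). Since $\Phi$ is a non-negative integer, the procedure must terminate.
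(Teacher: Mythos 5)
Your proposal is correct and follows essentially the same route as the paper, whose entire proof is the one-line observation that the claim follows from Proposition~\ref{LiEx} applied at each step of the algorithm. The only material you add is the explicit treatment of empty-row deletion and the potential-function argument for termination, both of which the paper leaves implicit; these are sound (note that dominance forces $\theta(a,y)\geq 1$ on every box of row $a$, so the decrement never produces a negative entry, and swaps never create empty rows, so $\Phi$ does strictly decrease).
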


\begin{proof}
It is clear by Proposition~\ref{LiEx}.
\end{proof}

\begin{definition}
For all type $\T$, we denote by $\T^E$ the type $(\T^L)^C$ obtained by first performing the line exchange algorithm on $\T$, and then performing the column exchange algorithm on $\T^L$.
\end{definition}

We now state the main result of this section, whose proof is detailed in Section~\ref{SectionProof}

\begin{Theorem}\label{TheoVexiType}
Let $\sigma \in S_n$ be a vexillary permutation and $\T_{\sigma}$ be its associated type. Then, we have:
\begin{enumerate}
\item $|\Tab(\T_{\sigma}^E)|=f^{\lambda(\sigma)}=f^{\lambda(\sigma)'}$;
\item The shape of $\T_{\sigma}^E$ is $\lambda(\sigma)'$.
\end{enumerate}
\end{Theorem}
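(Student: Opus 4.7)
Part (1) is a short chain of reductions. Both the line- and column-exchange algorithms are finite compositions of the elementary operations $\downarrow_a$ and $\overrightarrow{\cdot}^b$; each such step strictly decreases the sum of the entries of the type, so termination is immediate. By the Exchange Property (Proposition~\ref{LiEx}), each step preserves $|\Tab|$, whence $|\Tab(\T_\sigma^E)| = |\Tab(\T_\sigma^L)| = |\Tab(\T_\sigma)|$. The proposition at the end of Section~\ref{SubSecTypPerm}, which reformulates \cite[Theorem~4.1]{FV}, gives $|\Tab(\T_\sigma)| = |\Red(\sigma)|$. Since $\sigma$ is vexillary, Stanley's theorem (Theorem~\ref{TheoStanVexil}) yields $|\Red(\sigma)| = f^{\lambda(\sigma)}$, and $f^{\lambda(\sigma)} = f^{\lambda(\sigma)'}$ is the classical conjugation symmetry of the hook-length formula.

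For (2), the plan is first to extract two invariants of the exchange algorithm, then to use vexillarity to force the underlying diagram to be Young. Unpacking the definition of $\Inv(\sigma)$ directly, one checks that the size of row $a$ of $\T_\sigma$ equals $g_{\sigma^{-1}(a)}(\sigma)$ and the size of column $b$ equals $d_{\sigma^{-1}(b)}(\sigma)$. Hence the multiset of nonzero row sizes of $\T_\sigma$ rearranges (in decreasing order) to $\lambda(\sigma)$, and the multiset of nonzero column sizes to $\mu(\sigma)=\lambda(\sigma)'$, where the last equality uses vexillarity. Because $\downarrow_a$ and $\overrightarrow{\cdot}^b$ simply swap two whole rows (resp. columns), these two multisets are invariants of the algorithm and so remain the row- and column-size multisets of $\T_\sigma^E$.

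The main obstacle is to upgrade these multiset statements into a cell-by-cell description: the rows (resp. columns) of $\T_\sigma^E$ must be arranged so that the cell-support is the Young diagram of $\lambda(\sigma)'$ in the staircase coordinates of $\lambda_n$. I would prove by induction on $\ell(\sigma)$ that for vexillary $\sigma$, the line-exchange algorithm terminates with column-sets of consecutive rows nested in reverse inclusion $C_1\supseteq C_2\supseteq\cdots$ (top to bottom), and analogously that the column-exchange algorithm then produces nested row-sets for consecutive columns. The inductive step exploits the observation that deleting an initial or terminal fixed point of a vexillary permutation preserves vexillarity and corresponds to a straightforward truncation of $\T_\sigma$ compatible with the algorithm. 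The key combinatorial input---and the hardest part---is that the $2143$-avoiding condition is exactly what rules out a ``zig-zag'' pair of adjacent rows (or columns) with $\subseteq$-incomparable cell sets, since such a pair would block the algorithm from reaching a nested configuration. Once both nested-inclusion conditions hold, the multisets of row and column sizes determined in the preceding paragraph force the cell-support of $\T_\sigma^E$ to coincide with the Young diagram of $\lambda(\sigma)'$.
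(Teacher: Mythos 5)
Part~(1) of your proposal is correct and coincides with the paper's argument: Proposition~\ref{LiEx} preserves $|\Tab(\cdot)|$ at every swap, $|\Tab(\T_{\sigma})|=|\Red(\sigma)|$ comes from the reformulation of \cite[Theorem~4.1]{FV}, and Theorem~\ref{TheoStanVexil} finishes; your termination remark is a harmless addition the paper omits.

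Part~(2) has the right skeleton (row- and column-size multisets are invariant under the swaps, and vexillarity must force a nesting of rows and of columns), but the two points you yourself flag as hard are precisely where the argument is missing, not merely deferred. First, the nesting statement --- the paper's Lemma~\ref{VexCarComb}: if $l_i(\T_{\sigma})\leq l_j(\T_{\sigma})$ then the cell set of row $i$ is contained, column by column, in that of row $j$ --- is asserted but not proved, and the mechanism you propose cannot prove it. An induction on $\ell(\sigma)$ whose inductive step deletes an initial or terminal fixed point goes nowhere: such fixed points contribute no inversions, so deleting them leaves $\Inv(\sigma)$, $\ell(\sigma)$ and $\T_{\sigma}$ unchanged up to a shift of coordinates, and a vexillary permutation with no such fixed points admits no reduction at all. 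The paper instead derives the nesting directly from the definition $\lambda(\sigma)=\mu(\sigma)'$ via the $XY$/$YX$ stacking processes (Proposition~\ref{PropStack}), by induction on the rows sorted by decreasing length; no appeal to $2143$-avoidance is made, and your claim that pattern avoidance ``is exactly what rules out'' an incomparable pair of rows is left as an unproven slogan. Second, even granting the nesting, your sketch tracks only cell supports, whereas the operation $\downarrow_a$ is available only when row $a$ is \emph{dominant}, which additionally requires the valuation inequality $\theta(a,y)>\theta(a+1,y)$ on every cell of row $a$. You never verify this, so you cannot conclude that the line-exchange algorithm actually performs the swaps needed to sort the rows. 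The paper closes this gap by using the explicit valuation inherited from $\A_n$ (which guarantees $\theta(a,k)>\theta(a+1,k)$ on nested adjacent rows) and then argues by contradiction from the idempotency $(\T^L)^L=\T^L$ rather than by following the algorithm step by step. Both of these points need to be supplied before your outline becomes a proof.
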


\begin{figure}[!h] 
\includegraphics[width=3.5cm]{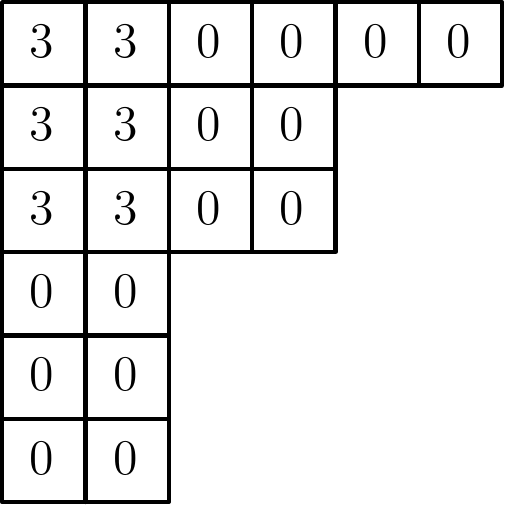}
\caption{This is the type $\T_{\sigma}^E$ obtained considering the vexillary permutation $\sigma=[4,8,9,5,7,6,1,3,2]$}
\end{figure}

\subsection{Proof of Theorem~\ref{TheoVexiType}}\label{SectionProof}

The first step of the proof consists in a characterization of vexillary permutations using their associated type.

\begin{definition}
Let $\sigma \in S_n$, we denote by $(l_i(\T_{\sigma}))_i$ and $(c_i(\T_{\sigma}))_i$ the sequences defined by
\begin{align*}
l_i(\T_{\sigma})&:=| \{ j \ | \ (j,i) \in \Inv(\sigma)  \}|, \\
c_i(\T_{\sigma})&:=|\{ j \ | \ (i,j) \in \Inv(\sigma)  \}|.
\end{align*}
\end{definition}

The following lemma is immediate by Definition~\ref{DefVexillary}.

\begin{lemma}\label{TempStack}
Let $\sigma \in S_n$, then the partition obtained by rearranging the sequence $(l_i(\T_{\sigma}))_i$ (resp. $(c_i(\T_{\sigma}))_i$) in a non-increasing order is $\mu(\sigma)$ (resp. $\lambda(\sigma)$).
\end{lemma}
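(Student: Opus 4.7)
The plan is to observe that the sequences $(l_i(\T_\sigma))_i$ and $(d_i(\sigma))_i$ are permutations of one another (via $\sigma^{-1}$), and likewise for $(c_i(\T_\sigma))_i$ and $(g_i(\sigma))_i$; once this is established, sorting both in nonincreasing order must produce the same partition, and the claim follows directly from Definition~\ref{DefVexillary}.

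More concretely, I would first unpack the definitions. By definition of $\Inv(\sigma)$, the box $(j,i)$ lies in $\Inv(\sigma)$ iff $j < i$ and $\sigma^{-1}(j) > \sigma^{-1}(i)$. Setting $k := \sigma^{-1}(i)$ and reindexing the sum via $l := \sigma^{-1}(j)$ (so $j = \sigma(l)$), this condition becomes $l > k$ and $\sigma(l) < i = \sigma(k)$. Counting such $l$ gives exactly $d_k(\sigma)$. Thus
\[
l_i(\T_\sigma) \;=\; d_{\sigma^{-1}(i)}(\sigma) \qquad \text{for all } i \in [n].
\]
An entirely symmetric change of variables, using that $(i,j) \in \Inv(\sigma)$ iff $i < j$ and $\sigma^{-1}(i) > \sigma^{-1}(j)$, yields
\[
c_i(\T_\sigma) \;=\; g_{\sigma^{-1}(i)}(\sigma) \qquad \text{for all } i \in [n].
\]

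Since $\sigma^{-1}$ is a bijection of $[n]$, the map $i \mapsto \sigma^{-1}(i)$ simply permutes the indices; hence $(l_i(\T_\sigma))_{i \in [n]}$ and $(d_i(\sigma))_{i \in [n]}$ are the same multiset of nonnegative integers, and similarly $(c_i(\T_\sigma))_{i \in [n]}$ and $(g_i(\sigma))_{i \in [n]}$ coincide as multisets. Rearranging each in nonincreasing order therefore produces the same sequence, which is by Definition~\ref{DefVexillary} equal to $\mu(\sigma)$ in the first case and $\lambda(\sigma)$ in the second.

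There is no real obstacle here: the entire proof is the two change-of-variable identities above, and the lemma is then immediate. The only point requiring mild care is to avoid confusing the two coordinate systems in play (the Ferrers-diagram coordinates of $\lambda_n$ used in the definition of $\T_\sigma$ versus the index $k = \sigma^{-1}(i)$ referenced by $d_k$ and $g_k$), which is why I would write out the substitution explicitly rather than just assert it.
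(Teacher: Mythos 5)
Your proof is correct and matches the paper's intent exactly: the paper simply declares the lemma ``immediate by Definition~\ref{DefVexillary},'' and your two change-of-variable identities $l_i(\T_\sigma)=d_{\sigma^{-1}(i)}(\sigma)$ and $c_i(\T_\sigma)=g_{\sigma^{-1}(i)}(\sigma)$ are precisely the verification being left to the reader. Nothing is missing.
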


Let us now consider $\sigma \in S_n$, we begin with putting the diagram $\Inv(\sigma)$ in a grid as depicted on Figure~\ref{SubDiagram}. 
\begin{figure}[!h] 
\includegraphics[width=5cm]{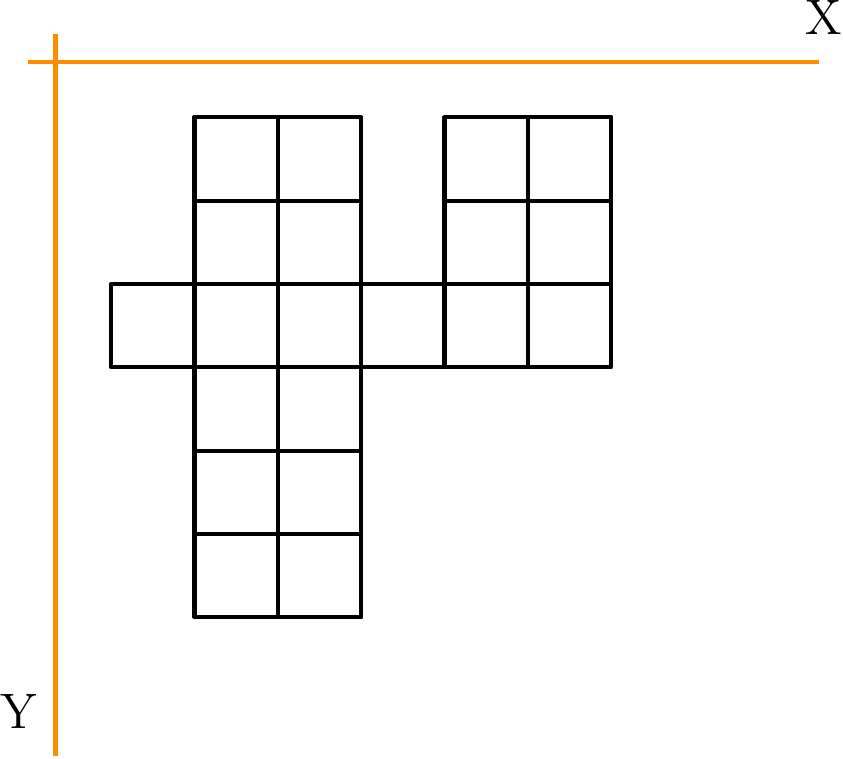}
\caption{Diagram associated with $\sigma=[7,8,4,5,1,2,6,9,3]\in S_9$}\label{SubDiagram}
\end{figure}
We first push all the boxes of $\Inv(\sigma)$ against the $Y$-axes, and we then push all the boxes against the $X$-axes, obtaining by this way a Ferrers diagram (see Figure~\ref{StackProcess}). 
\begin{figure}[!h] 
\includegraphics[width=9 cm]{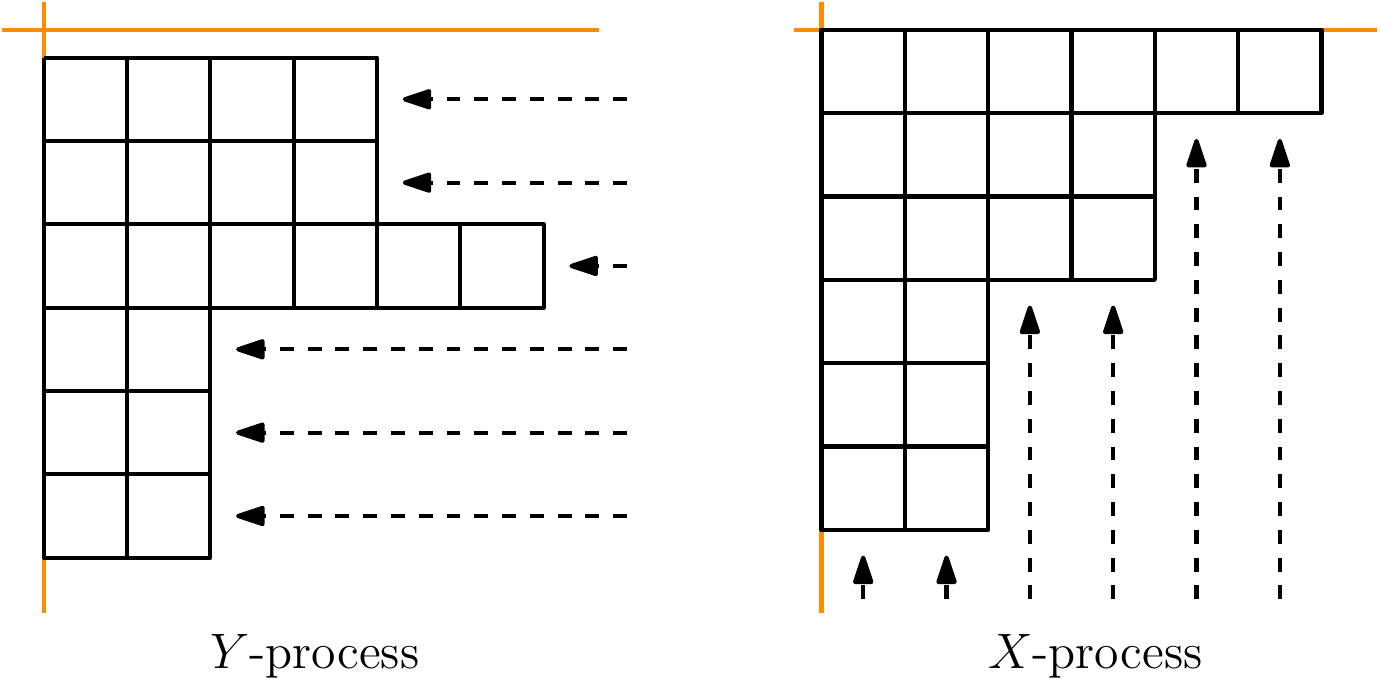}
\caption{}\label{StackProcess}
\end{figure}
The partition obtained after this \emph{$YX$-process} is $\mu(\sigma)$. 
Indeed, when we packed all the boxes against the $Y$-axes we obtain a diagram whose rows are left-justified, and row $i$ contains exactly $l_i(\T_{\sigma})$ boxes. Therefore, when we push everything on the $X$-axes, we are just rearranging these rows in a non-increasing order. Thus, thanks to Lemma~\ref{TempStack} the resulting diagram is precisely $\mu(\sigma)$.
Clearly, if we first stack on the $X$ and then on the $Y$-axes (this process is called the \emph{$XY$-process}), then the resulting partition is precisely $\lambda(\sigma)'$. 

The following proposition is an immediate consequence of the observation made in the previous paragraph.

\begin{proposition}\label{PropStack}
Let $\sigma \in S_n$. Then, $\sigma$ is vexillary if and only if the partitions obtained after we perform the $XY$-process and $YX$-process on $\Inv(\sigma)$ are the same.
\end{proposition}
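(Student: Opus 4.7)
The plan is to observe that this proposition is essentially a tautology once we combine the two computations sketched in the paragraph just above it with the definition of vexillary. The paragraph preceding the statement already establishes that the $YX$-process applied to $\Inv(\sigma)$ produces the Ferrers diagram of $\mu(\sigma)$ (because stacking against the $Y$-axis produces a left-justified diagram whose row $i$ has length $l_i(\T_\sigma)$, and then stacking against the $X$-axis reorders these rows into a non-increasing sequence, which equals $\mu(\sigma)$ by Lemma~\ref{TempStack}). So the first task is simply to make the symmetric statement for the $XY$-process fully explicit.

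For that, I would argue as follows, mirroring the previous paragraph with $l$ and $c$ swapped. Pushing the boxes of $\Inv(\sigma)$ down against the $X$-axis produces a bottom-justified diagram whose column of abscissa $i$ has exactly $c_i(\T_\sigma)$ boxes. Pushing this diagram leftwards against the $Y$-axis is then just a permutation of the columns that reorders their heights in a non-increasing sequence. By Lemma~\ref{TempStack}, the non-increasing rearrangement of $(c_i(\T_\sigma))_i$ is $\lambda(\sigma)$, so the resulting diagram is the one whose successive column heights are the parts of $\lambda(\sigma)$; by definition of conjugation, this is the Ferrers diagram of $\lambda(\sigma)'$.

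With these two computations in hand, the proposition unwinds immediately: the $XY$- and $YX$-processes produce the same diagram if and only if $\mu(\sigma) = \lambda(\sigma)'$, equivalently $\lambda(\sigma) = \mu(\sigma)'$ (since conjugation is an involution on partitions), which is by Definition~\ref{DefVexillary} exactly the condition that $\sigma$ be vexillary.

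There is no genuine obstacle here; the whole content of the proof is the bookkeeping verification that the $XY$-process yields $\lambda(\sigma)'$, and this is fully parallel to the $YX$ argument already written out in the text. The only point worth being careful about is the bijection between ``a sequence of column heights rearranged in non-increasing order'' and ``the conjugate of the associated partition'', which is the standard description of conjugation of Ferrers diagrams and requires no extra work.
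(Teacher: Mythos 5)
Your proposal is correct and follows exactly the route the paper intends: the paper treats this proposition as an immediate consequence of the preceding observation that the $YX$-process yields $\mu(\sigma)$ and the $XY$-process yields $\lambda(\sigma)'$, so that equality of the two outputs is precisely the condition $\lambda(\sigma)=\mu(\sigma)'$ of Definition~\ref{DefVexillary}. Your write-up merely makes the symmetric $XY$-computation (and the conjugation step) explicit, which is exactly the bookkeeping the paper leaves to the reader.
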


We now prove an intermediate lemma.

\begin{lemma}\label{VexCarComb}
Let $\sigma \in S_n$ be a vexillary permutation and $i,j$ be two integers. Then, we have the following two properties.
\begin{itemize}
\item If $l_i(\T_{\sigma}) \leq l_j(\T_{\sigma})$, then we have that for all $(i,a) \in \mathbb{N} \times \mathbb{N}$, 
\[
\text{if} \ (a,i) \in \Sh(\T_{\sigma}), \ \text{then} \  (a,j) \in \Sh(\T_{\sigma}).
\]
\item If $c_i(\T_{\sigma}) \leq c_j(\T_{\sigma})$, then we have that for all $(a,i) \in \mathbb{N} \times \mathbb{N}$,
\[
\text{if} \ (i,a) \in \Sh(\T_{\sigma}), \ \text{then} \  (j,a) \in \Sh(\T_{\sigma}).
\]
\end{itemize}
\end{lemma}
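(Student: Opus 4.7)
My plan is to argue the contrapositive of the first statement: assuming $\sigma$ is vexillary and $l_i(\T_{\sigma}) \leq l_j(\T_{\sigma})$, I will show that any failure of the inclusion forces $\sigma$ to contain the pattern $2143$, which contradicts vexillarity (via the classical equivalence, a consequence of Proposition~\ref{PropStack}). Set $A_k := \{a \mid (a,k) \in \Inv(\sigma)\}$, so that $|A_k| = l_k(\T_{\sigma})$. Suppose for contradiction that $A_i \not\subseteq A_j$, and pick $a_0 \in A_i \setminus A_j$; since $|A_j| \geq |A_i|$ forces $|A_j \setminus A_i| \geq |A_i \setminus A_j| \geq 1$, there also exists some $a_1 \in A_j \setminus A_i$.

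The heart of the argument is a case analysis on the relative order of $\{i,j,a_0,a_1\}$, using the following dictionary: for $a<b$, $(a,b)\in \Inv(\sigma)$ amounts to $\sigma^{-1}(a)>\sigma^{-1}(b)$ and $(a,b)\notin \Inv(\sigma)$ amounts to $\sigma^{-1}(a)<\sigma^{-1}(b)$, while for $a\geq b$ the condition $(a,b)\notin \Inv(\sigma)$ is automatic. Fixing first $i<j$ (the case $i>j$ is entirely symmetric), one has $a_0<i$ and $a_1<j$, so I would split on whether $a_1<i$, $a_1=i$, or $i<a_1<j$. In the first two subcases, the four conditions $(a_0,i)\in \Inv(\sigma)$, $(a_0,j)\notin \Inv(\sigma)$, $(a_1,i)\notin \Inv(\sigma)$, $(a_1,j)\in \Inv(\sigma)$ yield a cyclic chain of strict inequalities among $\sigma^{-1}(i),\sigma^{-1}(j),\sigma^{-1}(a_0),\sigma^{-1}(a_1)$ that is immediately self-contradictory; vexillarity is not even needed here.

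In the remaining subcase $a_0<i<a_1<j$, the same four conditions force $\sigma^{-1}(i)<\sigma^{-1}(a_0)<\sigma^{-1}(j)<\sigma^{-1}(a_1)$; at these four positions, the values $i,a_0,j,a_1$ satisfy $a_0<i<a_1<j$, so their ranks at the ordered positions $\sigma^{-1}(i)<\sigma^{-1}(a_0)<\sigma^{-1}(j)<\sigma^{-1}(a_1)$ spell out $2,1,4,3$, which is exactly the pattern $2143$ in $\sigma$ and contradicts vexillarity. The second statement of the lemma (for the row-sets $B_k := \{a \mid (k,a) \in \Inv(\sigma)\}$ and the quantities $c_k$) follows by the same case analysis with the roles of the two coordinates exchanged. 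The main obstacle is mostly bookkeeping: ensuring the case analysis is genuinely exhaustive, with a little extra care in the corner subcase where $a_1=i$ (and the analogue when $i>j$), in which one of the inversion conditions becomes automatic and the cyclic chain must be reassembled slightly differently but still without invoking $2143$-avoidance.
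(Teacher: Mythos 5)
Your case analysis itself is correct and exhaustive: with $A_k=\{a \mid (a,k)\in\Inv(\sigma)\}$ and the dictionary $(a,b)\in\Inv(\sigma)\Leftrightarrow a<b$ and $\sigma^{-1}(a)>\sigma^{-1}(b)$, each subcase either yields an outright cyclic chain of strict inequalities (so no hypothesis on $\sigma$ is needed there), or, in the subcase $a_0<i<a_1<j$, forces the positions $\sigma^{-1}(i)<\sigma^{-1}(a_0)<\sigma^{-1}(j)<\sigma^{-1}(a_1)$ to carry values in relative order $2143$; the column statement is indeed the mirror image. This is a genuinely different route from the paper's, which never mentions pattern avoidance: there one sorts the rows by decreasing $l_i$ and shows by induction that a failure of the inclusion would make the first row of the $XY$-stacked partition strictly longer than that of the $YX$-stacked partition, contradicting Proposition~\ref{PropStack}.

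The gap is in the very last step. The paper's operative definition of vexillary (Definition~\ref{DefVexillary}) is $\lambda(\sigma)=\mu(\sigma)'$, and Proposition~\ref{PropStack} merely restates that equality in terms of the two stacking processes; neither says anything about the pattern $2143$. The implication you actually need, namely ``$\sigma$ contains $2143$ implies $\lambda(\sigma)\neq\mu(\sigma)'$,'' is the nontrivial half of a classical theorem of Lascoux and Sch\"utzenberger, and its standard proof passes precisely through the nesting property asserted in Lemma~\ref{VexCarComb} (equivalently, the total ordering by inclusion of the rows of the Rothe diagram). So, as written, your argument either imports an unproved external theorem whose usual proof subsumes the lemma, or risks circularity if one tries to obtain that equivalence from the nesting property itself. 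To make it sound, either cite the equivalence explicitly as an external input (e.g.\ Macdonald's notes on Schubert polynomials, statement (1.27)), acknowledging that it is not ``a consequence of Proposition~\ref{PropStack},'' or replace the final step by a direct derivation from $\lambda(\sigma)=\mu(\sigma)'$ --- which is exactly what the paper's stacking-process induction accomplishes.
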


\begin{proof}
We prove this lemma only for lines, since the proof for columns is similar, and we simply denote by $l_i$ the integer $l_i(\T_{\sigma})$.
We denote by $n$ the number of non-empty rows in the diagram $\Sh(\T_{\sigma})$ and we set $i_1,\ldots,i_n$ a sequence of indices such that:
\begin{itemize}
\item $l_{i_k} \neq 0$ for all $k \in [n]$;
\item the sequence  $(l_{i_1},\ldots, l_{i_n})$ is non-increasing.
\end{itemize} 
We will prove by induction on $k \in [n]$ that the property holds for the row $i_k$. First, notice that we have $l_{i_1} \geq l_{i_q}$ for all $1< q \leq n$. Let us fix such a $q$, and consider a box $\mathfrak{c}=(i_q,p) \in \Sh(\T_{\sigma})$. 

Assume by contradiction that $(i_1,p) \in \Sh(\T_{\sigma})$, then we have the configuration depicted on Figure~\ref{Fig100}.\begin{figure}[!h] 
\includegraphics[width=8 cm]{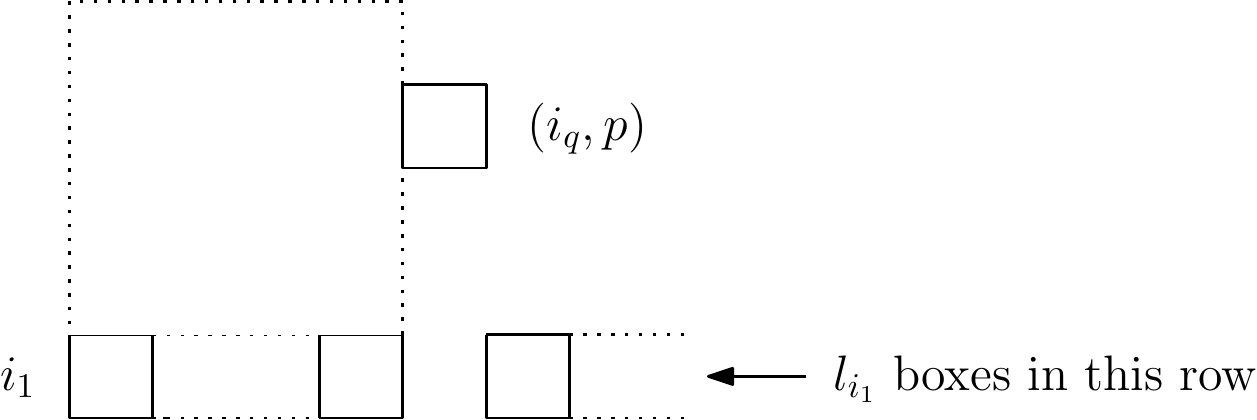}
\caption{Configuration when $(i_q,p) \in \Sh(\T_{\sigma})$ and $(i_{1},p)\notin \Sh(\T_{\sigma})$}\label{Fig100}
\end{figure}
Therefore, if we push the boxes against the $X$-axes, then in the first row there must be strictly more than $l_{i_1}$ boxes as represented on Figure~\ref{Fig101}. 
\begin{figure}[!h] 
\includegraphics[width=10 cm]{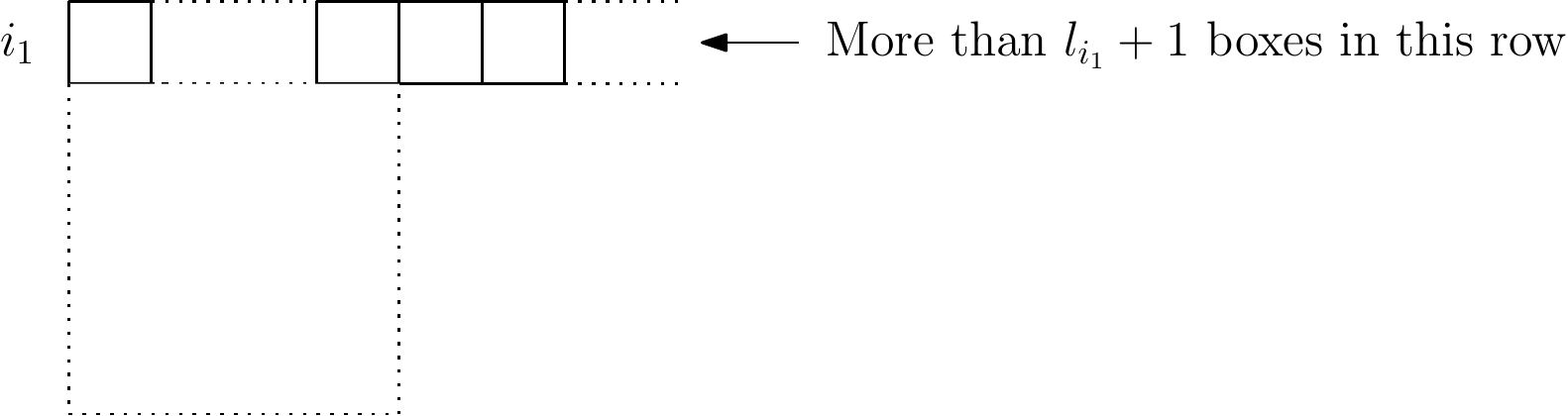}
\caption{Configuration after we pushed the boxes of $\Sh(\T_{\sigma})$ against the $X$-axes.}\label{Fig101}
\end{figure}
Thus, there are strictly more than $l_{i_1}$ boxes in the first row of the partition obtained after we perform the $XY$-process on $\Sh(\T_{\sigma})$. However, by maximality of $l_{i_1}$, the first row of the partition obtained we perform the $YX$-process on $\Sh(\T_{\sigma})$ contains $l_{i_1}$ boxes. Thus, $\lambda(\sigma) \neq \mu(\sigma)'$, and this contradicts the fact that $\sigma$ is vexillary. Consequently, we have $(i_1,p)\in \Sh(\T_{\sigma})$ and the lemma is true for row $i_1$. 

Let $k$ be such that the lemma is true for rows $i_1,\ldots,i_k$, and let $\lambda(\sigma)=(\lambda_1,\ldots,\lambda_m)$. By induction, if we delete rows $i_1,\ldots,i_k$ in $\Sh(\T_{\sigma})$ and then perform the $XY$ or $YX$ stacking process on the obtained diagram, then the resulting partition is $(\lambda_{k+1},\ldots,\lambda_m)$ in both cases. Then, the same argument as for $i_1$ proves that the lemma holds for row $i_{k+1}$, and this ends the proof.
\end{proof}

Eventually, we can now provide a proof of Theorem~\ref{TheoVexiType}.

\begin{proof}[Proof~of~Theorem~\ref{TheoVexiType}]
\textbf{Point (1):} this is an immediate consequence of Theorem~\ref{TheoStanVexil} together with Proposition~\ref{LiEx}.

\textbf{Point (2):} first, note that by definition of the line-exchange algorithm, we have
\begin{equation}\label{EqExchangeIdemp}
\text{for all} \ \T\in \Typ(S), \ \text{we have} \ (\T^L)^L=\T^L \ \text{and} \ (\T^C)^C=\T^C.
\end{equation}

Let us denote by $\theta$ the valuation associated with $\T_{\sigma}^L$ and by $l_i$ the number of boxes in row $i$ of $\Sh(\T_{\sigma}^L)$. Assume by contradiction that there exists an integer $k$ such that $l_k<l_{k+1}$. Then, thanks to Lemma~\ref{VexCarComb} we have that $\Sh(\T_{\sigma}^L)$ is as represented on Figure~\ref{FigStep1}.
\begin{figure}[!h] 
\includegraphics[width=8 cm]{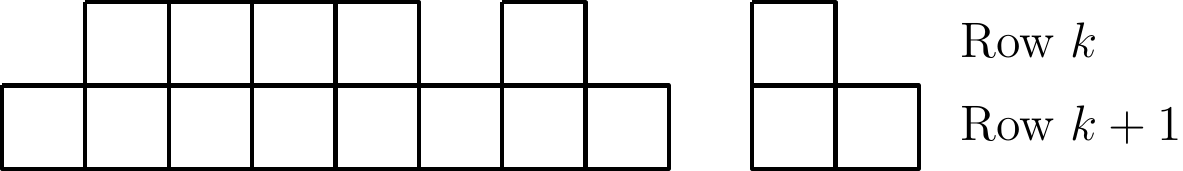}
\caption{Rows $k$ and $k+1$ of $\Sh(\T_{\sigma}^L)$.}\label{FigStep1}
\end{figure}
However, by construction for all $(a,k) \in \Sh(\T_{\sigma}^L)$ we have $\theta(a,k) > \theta(a+1,k)$. Therefore, if we perform the line-exchange algorithm on $\T_{\sigma}^L$, then these two rows are exchanged, so that we have 
\[
(\T_{\sigma}^L)^L \neq \T_{\sigma}^L,
\] 
contradicting \eqref{EqExchangeIdemp}. Thus, the sequence $(l_i)_i$ is non-increasing. Using a similar argument, we prove that the sequence $(c_i)_i$ is non-increasing, where $c_i$ is the number of boxes in column $i$ of $\Sh(\T_{\sigma}^E)=\Sh((\T_{\sigma}^L)^C)$.

Eventually, the same arguments as for the proof of Lemma~\ref{VexCarComb} prove that $\Sh(\T_{\sigma}^E)$ is a partition, which is necessarily equal to $\lambda(\sigma)'$. 
\end{proof}

\subsection{Link with balanced tableaux}\label{SecBalanced}

Let us now explain how the construction made in the previous sections can be use to provide an alternative proof of Theorem~\ref{TheoEGBal} (but not fundamentally different from the one in \cite{FGR}).
Let $\lambda$ be a partition of an integer $n$ that we identify with its Ferrers diagram. A box $\mathfrak{c}=(a,b)$ of $\lambda$ is called a \emph{corner} of $\lambda$ if and only if there is no boxes on the right and below $\mathfrak{c}$, \emph{i.e.} both $(a+1,b)$ and $(a,b+1)$ are not in $\lambda$.

Let $\mathfrak{c}=(a,b)$ be a corner of $\lambda$ such that $k=\lambda_a+\lambda'_b-1$ is maximal (such a corner is not necessarily unique). Then, we can place $\lambda$ in the staircase partition $\lambda_{k+1}$ as shown on Figure~\ref{FigFall1}.
\begin{figure}[!h] 
\includegraphics[width=9 cm]{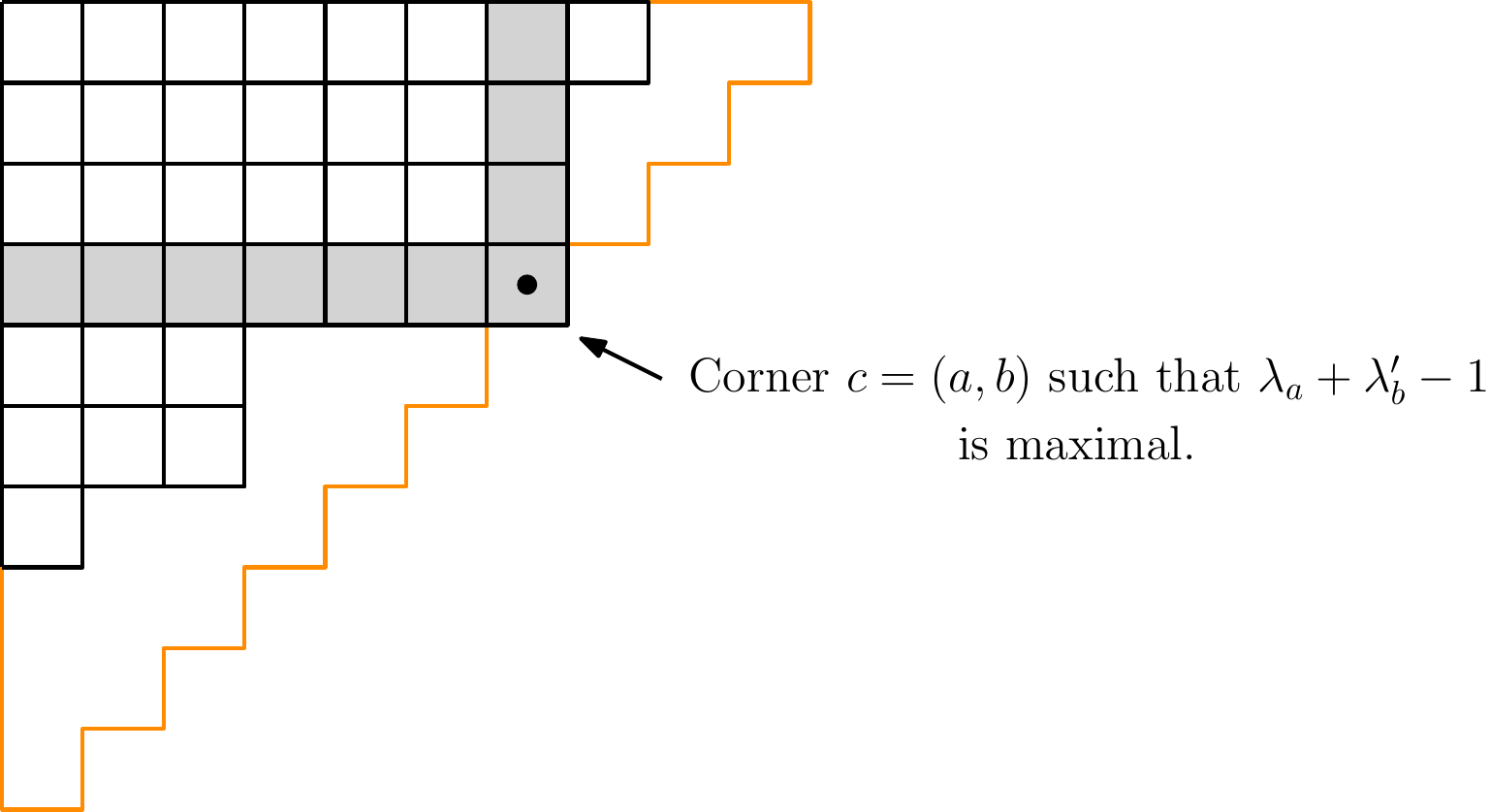}
\caption{The partition $(8,7,7,7,3,3,1)$ in $\lambda_{11}$.}\label{FigFall1}
\end{figure}

Let us look at the corners $(u,v)$ of $\lambda$ which are on the diagonal boundary of the staircase partition. For each such corner, we set $R_{(u,v)}=\{ (x,y) \in \lambda \ | \ x \leq u, \ y \leq v \}$ and we consider the union $R$ of the $R_{(u,v)}$. Then we let each connected component of $\lambda\setminus R$ fall in the staircase tableau as shown on Figure~\ref{Fall2}. We repeat the same procedure for each connected component of the resulting diagram, while it is possible. At the end, we get a sub-diagram of $\lambda_{k+1}$, which we denote by $S(\lambda)$.
\begin{figure}[!h] 
\includegraphics[width=11 cm]{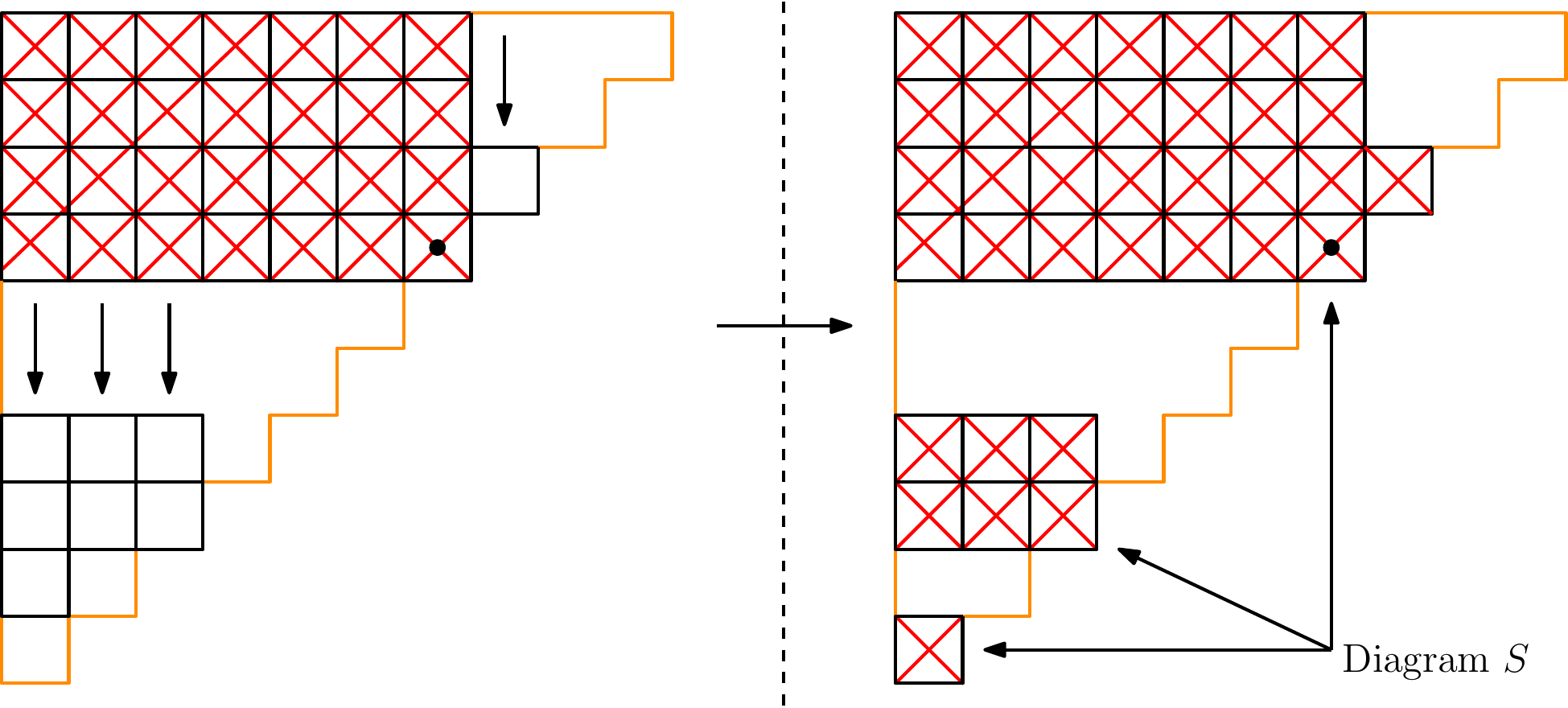}
\caption{}\label{Fall2}
\end{figure}

\begin{lemma}
There exists $\sigma_{\lambda} \in S_{k+1}$ such that $\Sh(\T_{\sigma(\lambda)})=S(\lambda)$. Moreover, $\sigma_{\lambda}$ is vexillary and $\lambda(\sigma)'=\lambda$.
\end{lemma}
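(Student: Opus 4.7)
The plan is to establish the three assertions in order---existence of $\sigma_\lambda$ with $\Sh(\T_{\sigma_\lambda}) = S(\lambda)$, the vexillary property of $\sigma_\lambda$, and the identity $\lambda(\sigma_\lambda)' = \lambda$---by a combined induction on the number of elementary falls used to construct $S(\lambda)$ together with a direct application of Proposition~\ref{PropStack}.

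For the existence of $\sigma_\lambda$, I would argue that $S(\lambda)$ is the inversion set of a permutation by verifying the biclosedness condition characterizing inversion sets in $S_{k+1}$: for all $1\leq a<b<c\leq k+1$, one has $(a,c)\in S(\lambda)$ if and only if $(a,b)\in S(\lambda)$ or $(b,c)\in S(\lambda)$, with the compatible implications in the reverse direction. I would proceed inductively on the number of elementary falls: the initial configuration $\lambda_{k+1}$ equals $\Inv(w_0)$, where $w_0$ is the longest element of $S_{k+1}$; each elementary fall of a connected component removes from the current inversion set a specific collection of boxes, corresponding to right-multiplying the underlying permutation by an appropriate sequence of simple transpositions. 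The crucial point is that the connected components of $\lambda\setminus R$, once dropped along anti-diagonals, never cross each other nor overshoot the diagonal boundary of the staircase; this follows from the maximality of $\lambda_a+\lambda'_b-1$ in the choice of the corner $\mathfrak{c}$, which is precisely calibrated so that each component lands in an empty region of $\lambda_{k+1}$.

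For the vexillary property, I would invoke Proposition~\ref{PropStack}: it suffices to check that the $XY$-process and the $YX$-process applied to $S(\lambda)$ return the same Ferrers diagram. The key observation is that, because the falls slide connected components along anti-diagonals, the multiset of non-empty row lengths of $S(\lambda)$ coincides with the multiset of non-empty column lengths of $\lambda$, and, symmetrically, the multiset of column lengths of $S(\lambda)$ coincides with the multiset of row lengths of $\lambda$. Consequently both stacking processes yield the same partition, namely $\lambda'$. By Lemma~\ref{TempStack} this gives both $\mu(\sigma_\lambda)=\lambda$ and $\lambda(\sigma_\lambda)=\lambda'$, so that $\mu(\sigma_\lambda)=\lambda(\sigma_\lambda)'$ and $\sigma_\lambda$ is vexillary. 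The identity $\lambda(\sigma_\lambda)'=\lambda$ then follows at once.

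The main obstacle I foresee is the careful bookkeeping in the first step: one has to formalize the falling procedure precisely enough to interpret each fall as an admissible move on inversion sets, and to confirm that distinct connected components of $\lambda\setminus R$ do not collide once they have fallen. The maximality condition on $\mathfrak{c}$ is exactly what prevents such collisions, and it is also what makes the row/column-length accounting in the second step work out so cleanly; verifying rigorously that the anti-diagonal slides exchange the roles of row and column multisets between $\lambda$ and $S(\lambda)$ will be the most delicate computation.
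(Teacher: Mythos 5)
Your high-level plan (show that $S(\lambda)$ is an inversion set, then apply Proposition~\ref{PropStack}) matches the paper's, but the two steps where all the work lies are not carried out correctly. First, the induction on falls does not match the construction: the falling procedure starts from $\lambda$ embedded top-left in $\lambda_{k+1}$, not from the full staircase $\Inv(w_0)$, and a fall \emph{translates} a connected component rather than removing boxes, so $|S(\lambda)|=|\lambda|$ throughout. The inductive step you would need --- that each fall of a whole component corresponds to multiplication by simple transpositions keeping the current configuration an inversion set --- is precisely the hard content and is left unproven (it is not even clear that the intermediate configurations are inversion sets). The closure condition you propose to verify is also misstated: ``$(a,c)\in A$ if and only if $(a,b)\in A$ or $(b,c)\in A$'' already fails for $\Inv(s_1)$ in $S_3$; the correct characterization is the conjunction of the two one-way implications ``$(a,b),(b,c)\in A\Rightarrow(a,c)\in A$'' and ``$(a,c)\in A\Rightarrow(a,b)\in A$ or $(b,c)\in A$''. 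The paper avoids any induction: it verifies, directly on the finished diagram $S(\lambda)$, the two hook-counting inequalities of \cite{FV} (Prop.~3.1 and Thm.~4.1 there) characterizing inversion sets, using two structural facts about $S(\lambda)$ that your write-up never isolates: every box of $S(\lambda)$ sits in a full rectangle of $S(\lambda)$ whose opposite corner lies on the diagonal boundary of the staircase (where $\theta=0$), and every box outside $S(\lambda)$ meets $S(\lambda)$ in its hook only strictly below it in its own column.

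Second, your row/column accounting is internally inconsistent. You assert that the falls exchange the multiset of row lengths of $S(\lambda)$ with the multiset of column lengths of $\lambda$ (and vice versa), and then conclude $\mu(\sigma_\lambda)=\lambda$ and $\lambda(\sigma_\lambda)=\lambda'$; but by Lemma~\ref{TempStack}, $\mu(\sigma_\lambda)$ is the rearrangement of the row counts $(l_i)$, so your exchange claim would give $\mu(\sigma_\lambda)=\lambda'$, not $\lambda$, and correspondingly $\lambda(\sigma_\lambda)=\lambda$, which contradicts the identity $\lambda(\sigma_\lambda)'=\lambda$ you are trying to prove (except for self-conjugate $\lambda$). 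A fall translates a component; it does not transpose rows and columns. What one actually has to check is that the $XY$- and $YX$-processes applied to $S(\lambda)$ return the same partition --- this is the (asserted) content of the last sentence of the paper's proof --- after which Proposition~\ref{PropStack} gives vexillarity and Lemma~\ref{TempStack} identifies the resulting partition. As written, your argument for the second half of the lemma would need to be redone from scratch.
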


\begin{proof}
The proof of this lemma requires the use of some results and notations from \cite{FV}, which we first recall. Let us consider the type $\A_{k+1}$ (see Section~\ref{SubSecTypPerm}) together with its associated valuation $\theta$. We consider the set $IS(\A_{k+1})$ made of all the sets $A \subset \lambda_{k+1}=\{ (a,b) \in \mathbb{N}^2 \ | \ 1 \leq a < b \leq k+1 \}$ such that:
\begin{enumerate}
\item for all $\mathfrak{c} \in A$, $\theta(\mathfrak{c}) \leq |A \cap \left( H_{(\lambda_{k+1})}(\mathfrak{c}) \setminus \{ \mathfrak{c} \} \right)|$;
\item for all $\mathfrak{c} \in \lambda_{k+1} \setminus A$, $\theta(\mathfrak{c}) \geq |A \cap \left( H_{(\lambda_{k+1})}(\mathfrak{c}) \setminus \{ \mathfrak{c} \} \right)|$.
\end{enumerate}
Thanks to \cite[Prop.~3.1]{FV} and \cite[Theorem~4.1]{FV}, a subset $A$ of $\lambda_{k+1}$ is the inversion set of a permutation in $S_{k+1}$ if and only if $A \in IS(\A_{k+1})$.

Our aim is now to prove that $S(\lambda) \in IS(\A_{k+1})$. Let $z=(x,y) \in \lambda_{k+1}$.
\begin{itemize}
\item If $z \notin S(\lambda)$, then for all $z' \in H_{\lambda_{k+1}}(z) \cap S(\lambda)$, $z'$ is in the same column and strictly below $z$. Moreover, by definition $\theta(z)$ equals the number of boxes strictly below $z$. Thus, we have $\theta(z) \geq |H_{\lambda_{k+1}}(z) \cap S(\lambda)|$;
\item If $z=(x,y) \in S(\lambda)$, then by construction of $S(\lambda)$ there exists $z'=(x',y')\in S(\lambda)$ such that:
$x' \geq x$, $y' \geq y$, $\theta(z')=0$, and 
\[
\text{for all} \ x \leq u \leq x' \ \text{and} \ y \leq v \leq y', \ \text{we have} \ (u,v) \in S(\lambda). 
\]
Thus, we have $\theta(z)=(x'-x)+(y'-y) \leq |H_{S(\lambda)}(z)|-1$.
\end{itemize}
Therefore, we have $S(\lambda) \in IS(\A_{k+1})$, so that there exists $\sigma_{\lambda}$ such that $\Sh(\T_{\sigma_{\lambda}})=S_{\lambda}$. Moreover, if we perform the stacking process on $S(\lambda)$, it is clear that both $XY$ and $YX$ processes end with the partition $\lambda(\sigma)$. Thus, $\sigma_{\lambda}$ is vexillary and this concludes the proof.
\end{proof}

\begin{proposition}
Let $\sigma_{\lambda}$ be the permutation whose inversion set is $S(\lambda)$. Then, we have
\[
\Tab(\T_{\sigma_{\lambda}}^L)=\Bal(\lambda).
\]
\end{proposition}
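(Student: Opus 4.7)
The plan is to show that, after identifying its shape with the Ferrers diagram of $\lambda$ in the natural way, $\T_{\sigma_\lambda}^L$ coincides with the balanced type $\B$ of $\lambda$ (Example~\ref{ExDefBalStand}), whose valuation assigns to each box $(a,b)$ the arm length $\lambda_a-b$. Once this equality of types is established, the proposition follows from $\Tab(\B)=\Bal(\lambda)$.

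I would begin by identifying the shape of $\T_{\sigma_\lambda}^L$. Since the line-exchange algorithm only permutes rows without altering columns, the row-length multiset of $\T_{\sigma_\lambda}^L$ coincides with that of $S(\lambda)$, which by the falling construction is $\lambda$. Applying Theorem~\ref{TheoVexiType}(2) to the vexillary $\sigma_\lambda$ (with $\lambda(\sigma_\lambda)'=\lambda$) gives that $\T_{\sigma_\lambda}^E=(\T_{\sigma_\lambda}^L)^C$ has shape equal to the Ferrers diagram of $\lambda$; since the column-exchange algorithm preserves row lengths, and each row of $S(\lambda)$ is already a contiguous column-interval of the staircase (the falling construction moves whole rows as rigid units), the shape of $\T_{\sigma_\lambda}^L$ is identified, after left-justifying each row, with the Ferrers diagram of $\lambda$.

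The technical core is the computation of the valuation. In $\T_{\sigma_\lambda}$, the box at paper coordinate $(i,j)$ carries $\theta(i,j)=j-i-1$ (its leg length in $\lambda_{k+1}$), and each dominance exchange decrements the values of the dominant row by one while leaving the dominated row unchanged. By induction on the falling construction of $S(\lambda)$, I would show that the line-exchange algorithm precisely inverts the falling step by step: each row of $\lambda$ is returned to its canonical position, and its valuation along the row becomes the arithmetic sequence $0,1,\dots,\lambda_a-1$. Under the natural identification matching paper column $a+j$ with standard column $\lambda_a+1-j$ in row $a$, this sequence reads as the arm lengths $\lambda_a-1,\lambda_a-2,\dots,0$ along row $a$ of $\lambda$, which is exactly $\B$.

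The main obstacle is this inductive step: showing that each individual falling event is inverted by a matching sequence of dominance exchanges, and that the total number of decrements along each row equals the shift imposed on it by the fall. This will rely on Lemma~\ref{VexCarComb} to guarantee, via the vexillary hypothesis, that the requisite dominance relations are present whenever a shorter row has fallen below a longer one, together with a careful bookkeeping of value decrements throughout the iterations of the algorithm.
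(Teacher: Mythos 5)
Your overall strategy is the same as the paper's: show that $\T_{\sigma_\lambda}^L$ \emph{is} the balanced type $\B$ of $\lambda$, by checking separately that its shape is the Ferrers diagram of $\lambda$ and that its valuation is the arm length. However, the step you yourself flag as the ``main obstacle'' is exactly the content of the proof, and the mechanism you propose for it is not the right one. The line-exchange algorithm does not ``invert the falling step by step'': the falling construction translates whole connected components downward, leaving empty rows behind, and the algorithm disposes of those empty rows by simply erasing them in step (1), not by exchanges. The exchanges that actually occur are the adjacent transpositions needed to sort the (already left-justified) rows of $S(\lambda)$ into non-increasing length, and a row is decremented once for each row of $S(\lambda)$ lying below it with at least as many boxes. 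The paper closes the gap with a short direct computation that your sketch does not supply: by construction of $S(\lambda)$, if $\mathfrak{c}$ is the rightmost box of a row, then $\theta(\mathfrak{c})$ equals precisely the number of such descents that row will undergo, the values along the row form an arithmetic progression with step one ending at $\theta(\mathfrak{c})$, and each descent decrements the whole row by one; hence the rightmost box lands at $0$ and the row reads off the arm lengths $\lambda_a-1,\dots,1,0$. Without this identity between the initial value of the rightmost box and the number of sorting descents, the ``careful bookkeeping of value decrements'' you defer to is the entire proof.

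Two smaller but real issues. First, the proposition asserts an equality of sets $\Tab(\T_{\sigma_\lambda}^L)=\Bal(\lambda)$, so the shape of $\T_{\sigma_\lambda}^L$ must literally be the left-justified Ferrers diagram of $\lambda$ and the valuation must literally be the arm lengths; your hedges ``after left-justifying each row'' and ``under the natural identification matching paper column $a+j$ with standard column $\lambda_a+1-j$'' are not available moves, since relabelling columns changes which fillings belong to $\Tab(\T)$ (hooks are defined by the actual embedding in $\mathbb{N}\times\mathbb{N}$). In the paper's conventions the rows of $S(\lambda)$ are already left-justified and the valuation of $\A_{k+1}$ already decreases left to right (it coincides with the balanced type of the staircase), so no reversal is needed; your argument should be carried out in those coordinates from the start. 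Second, invoking Theorem~\ref{TheoVexiType}(2) to pin down the shape of $\T_{\sigma_\lambda}^L$ is indirect and unnecessary: the row lengths of $S(\lambda)$ are a rearrangement of the parts of $\lambda$ by construction, and the line-exchange algorithm sorts them.
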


\begin{proof}
First, note that we have $\Sh(\T_{\sigma_{\lambda}}^L)=\lambda$ by construction of $S(\lambda)$. We denote by $\theta$ and $\theta'$ the valuations associated with the types $\T_{\sigma_{\lambda}}$ and $\T_{\sigma_{\lambda}}^L$, respectively. Let $\mathfrak{c}=(a,b) \in S(\lambda)$ such that all boxes in the same row and on the right of $\mathfrak{c}$ are not in $S(\lambda)$. Then, by construction of $S(\lambda)$ and by definition of $\theta$ we have:
\begin{itemize}
\item $\theta(\mathfrak{c})$ equals the number of indices $k<a$ such that row $k$ of $S(\lambda)$ contains more boxes than row $a$;
\item for all $\mathfrak{d}=(a,d) \in S(\lambda)$, we have $\theta(\mathfrak{d})=\theta(\mathfrak{c}) + d-b$.
\end{itemize}
Therefore, when we perform the line-exchange algorithm on $\T_{\sigma_{\lambda}}$ we have that the row $a$ of $\T_{\sigma_{\lambda}}$ is swapped with exactly $\theta(\mathfrak{c})$ rows below it. Thus, we have that for all $\mathfrak{d} \in \lambda$, 
\[
\theta'(\mathfrak{d})=a_{\lambda}(\mathfrak{d}),
\]
hence the elements of $\Tab(\T_{\sigma_{\lambda}})$ are the balanced tableaux of shape $\lambda$, and reciprocally. This ends the proof.
\end{proof}

The previous proposition, together with Proposition~\ref{LiEx}, immediately imply Theorem~\ref{TheoEGBal}.

\subsection{An equivalence relation between vexillary permutations}\label{SecEquivRelat}

At this point, a natural question arises: given two vexillary permutation $\sigma$ and $\omega$, when do we have $\T_{\sigma}^E=\T_{\omega}^E$ ? In this section we answer this section by exhibiting an equivalence relation $\sim_v$ on the set of vexillary permutations with the property that, for any two vexillary permutations $\sigma \in S_n$ and $\omega \in S_m$, $\T_{\sigma}^E=\T_{\omega}^E$ if and only if $\sigma \sim_v \omega$.

We first introduce a notation.
Let $\sigma \in S_n$, $p \leq n$ be the lowest integer such that $\sigma(p) \ne p$ and $q\leq n$ be the biggest integer such that $\sigma(q) \ne q$. We define 
$$\overline{\sigma} := [\sigma(p)-(p-1) \ ; \ \sigma(p+1)-(p-1) \ ; \ \ldots \ ; \ \sigma(q)-(p-1)].$$
Note that $\overline{\sigma}$ is an element of $S_{n+1-p-q}$ because of the choice of $p$ and $q$.

\begin{definition}
We say that $\sigma \sim_v \omega$ if and only if $\overline{\sigma} = \overline{\omega}$.
\end{definition}

\begin{Theorem}
Let $\sigma$ and $\omega$ be two vexillary permutations, then $\T_{\sigma}^E=\T_{\omega}^E$ if and only if $\sigma \sim_v \omega$.
\end{Theorem}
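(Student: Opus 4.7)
Both implications follow from understanding how the exchange algorithm interacts with the valuation $\theta(a,b) = b - a - 1$ that defines $\T_\sigma$.

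For the direction $\sigma \sim_v \omega \Rightarrow \T_\sigma^E = \T_\omega^E$, it suffices to show $\T_\sigma^E = \T_{\overline{\sigma}}^E$, which reduces to invariance of $\T_\sigma^E$ under the two elementary moves generating $\sim_v$: appending a trailing fixed point and prepending a leading fixed point with value shift. Appending $\sigma(n+1) = n+1$ creates no new inversion, so $\Inv(\sigma') = \Inv(\sigma)$ as subsets of $\mathbb{N}^2$, and since $\theta(a,b)$ is intrinsic to the coordinates, $\T_{\sigma'} = \T_\sigma$. Prepending (with value shift) yields $\Inv(\sigma') = \Inv(\sigma) + (1,1)$, and the valuation at each shifted box is preserved since $\theta(a+1,b+1) = \theta(a,b)$; the shift introduces exactly one empty top row and one empty leftmost column, which are erased at the very first step of the line and column exchange algorithms respectively, so the subsequent swaps are identical to those performed on $\T_\sigma$ and $\T_{\sigma'}^E = \T_\sigma^E$.

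For the converse $\T_\sigma^E = \T_\omega^E \Rightarrow \sigma \sim_v \omega$, I reduce to $\sigma = \overline{\sigma}$, $\omega = \overline{\omega}$ and aim to recover $\sigma$ uniquely from $\T_\sigma^E$ by reversing the two exchange phases. The key closed-form is that in $\T_\sigma^L$, the valuation at any box $(a,b)$ equals $b - r - 1 - d_r$, where $r$ is the original row index of the row now at position $a$ and $d_r$ is the number of downward swaps that row underwent: this follows because every downward dominant swap decreases its row's valuations by $1$ while leaving the ascending row untouched. A bubble-sort analysis, using the nested column structure of vexillary inversion sets granted by Lemma~\ref{VexCarComb}, yields $d_r = |\{r' > r : L_{r'} \geq L_r\}|$, where $L_r$ denotes the length of the original row $r$. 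From $\T_\sigma^L$ one thus reads off, for each occupied row, both its length (from its column set) and the integer $S_r := r + d_r$ (from any valuation in the row); these data pin down the length assignment $r \mapsto L_r$ by a peeling argument, and a symmetric analysis for the column exchange applied to $\T_\sigma^L$ recovers the column permutation. Together they reconstruct $\Inv(\sigma) \subset \mathbb{N}^2$, and the hypothesis $\sigma = \overline{\sigma}$ fixes the absolute placement of $\Inv(\sigma)$, so $\sigma$ is recovered uniquely.

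The main obstacle is managing the ambiguity that arises when several rows or columns have identical lengths, since such rows share their $(L_r, S_r)$ data and cannot be distinguished by the reconstruction. This is handled by Lemma~\ref{VexCarComb} applied with equality: equal-length rows of $\T_\sigma$ share identical column support, so any permutation among them leaves $\Inv(\sigma) \subset \mathbb{N}^2$ unchanged as a set, and the analogous statement holds for equal-length columns in the column-exchange phase. Combining this carefully handled reconstruction with the forward direction yields the claimed equivalence.
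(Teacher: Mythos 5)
Your forward direction coincides with the paper's: $\Inv(\sigma)$ is a diagonal translate of $\Inv(\overline{\sigma})$ inside the staircase, the valuation $\theta(a,b)=b-a-1$ is invariant under that translation, and the empty rows and columns produced by the shift are erased before any swaps occur, so $\T_{\sigma}^E=\T_{\overline{\sigma}}^E$. That half is fine and is exactly the paper's Step~1.

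The converse is where you take a genuinely different route, and where the argument has real gaps. The paper does no explicit reconstruction: it introduces a \emph{reverse} line-exchange algorithm driven by dethroned rows, observes that $\overline{\T_{\sigma}}$ has no dethroned row for any $\sigma$ (since $\theta(a,y)=y-a-1$ strictly decreases down each column), and deduces that the exchange algorithm is injective on the types $\T_{\sigma}$, so $\T_{\sigma}^E=\T_{\omega}^E$ forces $\overline{\T_{\sigma}}=\overline{\T_{\omega}}$ and hence $\overline{\sigma}=\overline{\omega}$. Your route instead hinges on the closed form $d_r=|\{r'>r \ : \ L_{r'}\geq L_r\}|$, and this is asserted rather than proved: to justify it you must show that at every stage of the algorithm, whenever two adjacent rows satisfy the support-containment half of dominance they also satisfy the valuation half. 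Writing the current valuation of the row of original index $r$ as $y-r-1-(\text{number of down-moves so far})$, that condition for two rows sitting at positions $p$ and $p+1$ reduces to comparing their numbers of up-moves, which a one-line ``bubble-sort analysis'' does not settle. A second gap: the column phase is applied to $\T_{\sigma}^L$, not to $\T_{\sigma}$, so its valuations are no longer of the form $b-a-1$ and its column indices have been renumbered by the erasure of empty columns in step (1) of the algorithm; the ``symmetric analysis'' you invoke therefore needs its own derivation, including a transfer of Lemma~\ref{VexCarComb} to the columns of $\T_{\sigma}^L$. None of this looks unfixable, but as written the reconstruction is a sketch whose two load-bearing formulas are unverified, whereas the reversibility argument makes the whole converse immediate.
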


\begin{proof}
\textbf{Step 1:} we begin with giving a combinatorial interpretation for the relation $\sim_v$. Let $\sigma \in S_n$ and $\omega \in S_m$ such that $\sigma \sim_v \omega$. Without loss of generality, we can assume that $n$ is larger than $m$. Then, we can see $\omega$ as a permutation of $S_n$ by adding $(n-m)$ fixed points at the end of $\omega$. Let us denote by $p_{\sigma}$ (resp. $p_{\omega}$) the smallest integer such that $\sigma(p_{\sigma}) \neq p_{\sigma}$ (resp. $\omega(p_{\omega}) \neq p_{\omega}$). By definition, we have $\Inv (\overline{\sigma})=\Inv(\overline{\omega})$ and
\[ \Inv (\sigma)=\{ (x+(p_{\sigma}-1),y+(p_{\sigma}-1)) \ | \ (x,y)\in \Inv(\overline{\sigma}) \}.\]
Thus, if we look at $\T_{\sigma}$ and $\T_{\omega}$, we have the situation described on Figure~\ref{Figure19}. 
\begin{figure}[!h] 
\includegraphics[width=12 cm]{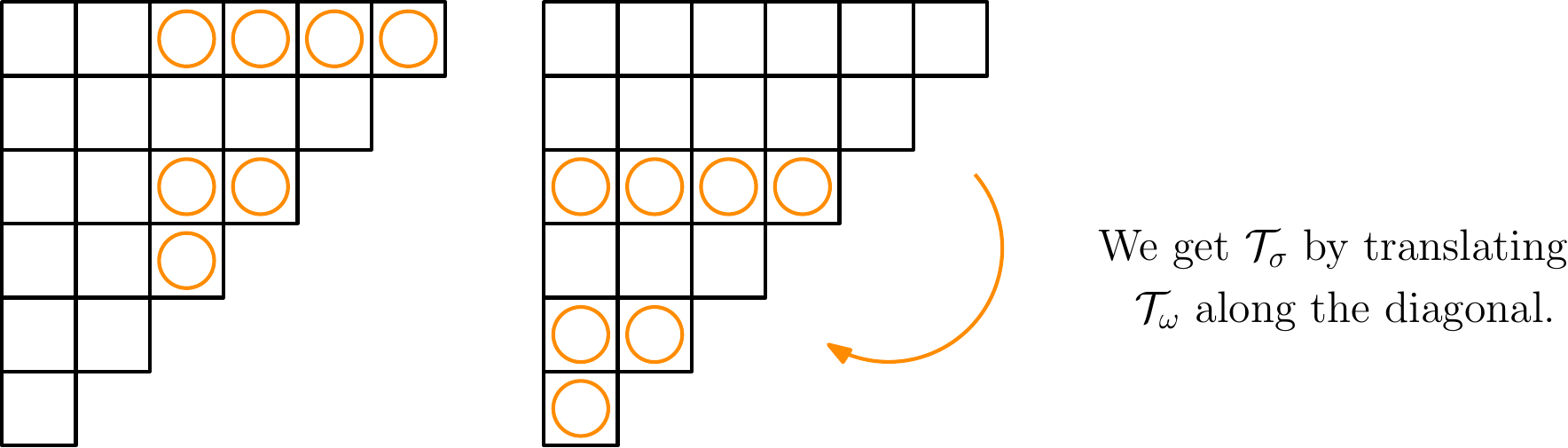}
\caption{The types $\T_{\sigma}$ and $\T_{\omega}$ seen as subsets of $\lambda_n$.}\label{Figure19}
\end{figure}
Then, we have $\T_{\sigma}^E=\T_{\omega}^E$ by construction.

\textbf{Step 2:} in order to prove the converse implication, we define two algorithms: one which reverses the Line-exchange Algorithm and another which reverse the Column-exchange Algorithm. Since these two algorithms are similar, we only give the definition of the algorithm on lines. Let $\T$ be a type of shape $S$, the \emph{reverse line-exchange algorithm} is the algorithm described below.
\begin{enumerate}
\item Erase all the empty rows of $\T$.
\item Set $i:=r$, where $r$ is the number of non-empty rows of $\T$.
\begin{enumerate}
\item If $i$ is a dethroned row of $\T$, then set $\T:=\T\! \! \uparrow_i$ and go back to step (2). Otherwise, go to step (2-b).
\item If there is no row above $i$, then the algorithm stops. Otherwise, set $i:=i-1$ and go back to step (2-a).
\end{enumerate}
\end{enumerate}
We denote by $^L\T$ the type obtained after we perform the reverse line-exchange algorithm. We also denote by $\overline{\T}$ the type obtained from $\T$ deleting empty rows.
In general, the type $^L(\T^L)$ is different from the type $\overline{\T}$. However, it is clear that $^L(\T^L)=\overline{\T}$ whenever there is no dethroned lines in $\overline{\T}$, and by construction there is no dethroned lines in $\overline{\T_{\sigma}}$ for any $\sigma \in S_n$. 

Therefore, for all vexillary permutations $\sigma,\omega \in S_n$, if $\T_{\sigma}^E=\T_{\omega}^E$ then we have $\overline{\T_{\sigma}}=\overline{\T_{\omega}}$, and this implies that $\T_{\sigma}=\T_{\omega}$.
Then, we have $\Inv(\overline{\sigma})=\Inv(\overline{\omega})$ implying that $\overline{\sigma}=\overline{\omega}$, \emph{i.e.} $\omega \sim_v \sigma$. This concludes the proof.
\end{proof}

\section{Combinatorial properties of types $\T_{\sigma}^E$}\label{SecCombinProp}

We finish this article by studying two combinatorial properties of types $\T_{\sigma}^E$, relative respectively to \emph{Schur functions} and partial fillings of tableaux.

\subsection{Semi-standard tableaux of type $\T_{\sigma}^E$ and Schur function}

As explained in \cite{FGR}, one can define a notion of ``semi-standard balanced tableaux" of a given shape $\lambda \vdash n$, thanks to the concept of \emph{column-strict balanced labelling} of a diagram, that we now detail by adapting it to our terminology. Let $\lambda$ be a partition and $\B$ be the type associated with $\Bal(\lambda)$ (see Example~\ref{ExDefBalStand}). Let $T=(t_{\mathfrak{c}})_{\mathfrak{c}\in \lambda}$ be a filling of $\lambda$ with integer, which is not necessarily a tableau (for instance, a given integer can appear several times in $T$). We say that $T$ is a column-strict balanced labelling of $\lambda$ if and only if there exists a filling sequence $L=[\mathfrak{c}_1,\ldots,\mathfrak{c}_n]$ such that:
\begin{itemize}
\item for all $i<j$, $t_{\mathfrak{c}_i} \leq t_{\mathfrak{c}_j}$;
\item for all $i<j$, if $\mathfrak{c}_i$ and $\mathfrak{c}_j$ are in the same column of $\lambda$, then $t_{\mathfrak{c}_i} < t_{\mathfrak{c}_j}$.
\end{itemize}
Let us denote by ${\rm SST}(\B)$ the set of all column-strict balanced labellings of $\lambda$ (the ${\rm SST}$ stands for ``\emph{semi-standard tableau of type $\B$}''). To each element $T=(t_{\mathfrak{c}})_{\mathfrak{c}\in \lambda}$ of ${\rm SST}(\B)$, we can associate a monomial 
$\displaystyle{x^T=\prod_{\mathfrak{c}\in \lambda}x_{t_{\mathfrak{c}}}}$, and thanks to \cite{FGR} we have:
\[
s_{\lambda}(x_1,x_2,\ldots)=\sum_{T \in {SST}(\B)}x^T,
\]
where $s_{\lambda}$ is the Schur function of shape $\lambda$.

All these definitions admit a straightforward generalization for all type, and we associate each type $\T$ with a set ${\rm SST}(\T)$ of semi-standard tableaux of type $\T$. We then have the following property.

\begin{proposition}
Let $\sigma$ be a vexillary permutation. Then, we have 
\[
s_{\lambda(\sigma)}=\sum_{T \in {SST}(\T_{\sigma}^E)}x^T.
\]
\end{proposition}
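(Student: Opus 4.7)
The plan is to establish the chain of equalities
\[
\sum_{T \in {\rm SST}(\T_{\sigma}^E)} x^T \;=\; \sum_{T \in {\rm SST}(\T_{\sigma})} x^T \;=\; s_{\lambda(\sigma)}.
\]
The first equality will come from a semi-standard refinement of the exchange property (Proposition~\ref{LiEx}), iterated through the line- and column-exchange algorithms. The second will come from the row-swap bijection of the Remark in Section~\ref{SubSecTypPerm} between $\Tab(\T_{\sigma})$ and $\Bal(D(\sigma))$, combined with the Fomin-Greene-Reiner identity \cite{FGR} for column-strict balanced labellings of $D(\sigma)$ and Stanley's identification of the Stanley symmetric function $F_{\sigma}$ with $s_{\lambda(\sigma)}$ in the vexillary case (the symmetric-function refinement underlying Theorem~\ref{TheoStanVexil}).

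The core technical step is a semi-standard version of Proposition~\ref{LiEx}: for a dominant row $a$ of $\T$, the map $T \mapsto T \!\!\downarrow_a$ restricts to a weight-preserving bijection ${\rm SST}(\T) \to {\rm SST}(\T \!\!\downarrow_a)$, and similarly for dominant columns. Weight preservation is immediate, since $T \!\!\downarrow_a$ merely permutes the entries of $T$. For bijectivity I would argue through filling sequences: given $T \in {\rm SST}(\T)$ witnessed by a filling sequence $L$, define $L'$ by swapping in $L$, for each $y$ with $(a,y) \in \Sh(\T)$, the two positions of $(a,y)$ and $(a+1,y)$. The dominance assumption $\theta(a,y) > \theta(a+1,y)$ forces $(a+1,y)$ to precede $(a,y)$ in $L$. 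The column-strict condition of ${\rm SST}(\T)$ is preserved verbatim, since rows are swapped but columns are undisturbed, and the weak-increase condition along $L'$ follows by transporting the inequality $t_{a+1,y} \le t_{a,y}$ (the semi-standard analogue of Lemma~\ref{StepEx}) across the swap.

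Iterating this through the line- and column-exchange algorithms yields the first equality. For the second, the bijection between $\Tab(\T_{\sigma})$ and $\Bal(D(\sigma))$ mentioned in Section~\ref{SubSecTypPerm} is a sequence of row swaps, so it lifts to a weight-preserving bijection between ${\rm SST}(\T_{\sigma})$ and the set of column-strict balanced labellings of $D(\sigma)$ (column-strictness is preserved because columns are not moved). The Fomin-Greene-Reiner theorem \cite{FGR} expresses the generating function of the latter as $F_{\sigma}$, and Stanley's theorem gives $F_{\sigma} = s_{\lambda(\sigma)}$ for vexillary $\sigma$.

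The main obstacle is verifying the semi-standard analogue of Lemma~\ref{StepEx}, namely the inequality $t_{a+1,y} \le t_{a,y}$ for all $(a,y)$ in a dominant row $a$, and then checking that the swap from $L$ to $L'$ preserves both the weak-increase and column-strict conditions of a semi-standard filling. In the standard case distinctness of entries makes every inequality in the proof of Lemma~\ref{StepEx} strict; here the column-strict condition of ${\rm SST}$ is precisely what is needed to rule out pathological equalities in the arm analysis, while the dominance hypothesis $\theta(a,y) > \theta(a+1,y)$ supplies the required strict gap in the type. The remaining bookkeeping is parallel to the standard case, and the column version follows by the obvious symmetry.
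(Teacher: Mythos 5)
Your proof is correct in outline, but it reaches the result by a different reduction than the paper's. The shared core is the observation that the line- and column-exchange steps lift to weight-preserving bijections on semi-standard tableaux; you establish this via filling sequences (applying Lemma~\ref{StepEx} to the standard tableau $T_L$ of a witnessing sequence $L$ to see that $(a+1,y)$ precedes $(a,y)$, then swapping those two boxes in $L$ and noting that the label word read along the sequence is unchanged and that column-strictness survives because whole rows move together), whereas the paper leaves exactly this step implicit, asserting only that one passes from ${\rm SST}(\T_{\sigma}^E)$ to another set ``by swapping columns and lines''. Where you diverge is the endpoint: the paper identifies ${\rm SST}(\T_{\sigma}^E)$ with the set $\SSF(\Inv(\sigma))$ of \cite[Section~5]{FV} and invokes \cite[Theorem~5.10]{FV}, which already yields $s_{\lambda(\sigma)}$; you instead stop at ${\rm SST}(\T_{\sigma})$, transport it to the column-strict balanced labellings of the Rothe diagram $\textbf{D}(\sigma)$ via the row-swap correspondence of the Remark in Section~\ref{SubSecTypPerm}, and then quote the Fomin--Greene--Reiner generating-function identity together with Stanley's identification $F_{\sigma}=s_{\lambda(\sigma)}$ for vexillary $\sigma$. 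Your route is more self-contained with respect to the classical literature and has the merit of making the semi-standard exchange lemma explicit, at the cost of leaning on the Remark's unproved row-swap bijection, which needs the same filling-sequence argument you give for the exchange step in order to lift to the semi-standard level; the paper's route is shorter but pushes all of the content into \cite{FV}. I see no step of yours that fails --- only be sure to argue that lift of the Remark's bijection with the same care, since the filling-sequence structure (and hence membership in ${\rm SST}$) must be tracked through whichever rows are permuted there.
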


\begin{proof}
We have a one-to-one correspondence between the elements of ${\rm SST}(\T_{\sigma}^E)$ and the elements of $\SSF(\Inv(\sigma))$ introduced in \cite[Section~5]{FV}. More precisely, we go from one set to the other just by swapping column and lines according to the line and column exchange process. Therefore, the result follow immediately from \cite[Theorem~5.10]{FV}. 
\end{proof}

\subsection{Partial fillings of tableaux of type $\T_{\sigma}^E$}

It is classical that the number standard tableaux of a given shape $\lambda$ satisfy the following relation:
\begin{equation*}\label{EqSumCorner}
|{\rm SYT}(\lambda)|= \sum_{\lambda^-}|{\rm SYT}(\lambda^-)|,
\end{equation*}
where the sum ranges on the set of all the partition $\lambda^-$ obtained from $\lambda$ by suppressing a corner of its Ferrers diagram. Thanks to Theorem~\ref{TheoEGBal}, \eqref{EqSumCorner} also holds for balanced tableaux. In fact, proving \eqref{EqSumCorner} for balanced tableaux is clearly equivalent to proving Theorem~\ref{TheoEGBal}, and this approach naturally leads to study the number of balanced tableaux such that the integers $1,2,\ldots,k$ appear at given fixed positions. The study of this problem has been started in \cite{EG}, but was only solved in some specific cases. In this section, we give a complete solution to this problem in the general case of the type $\T_{\sigma}^E$ with $\sigma$ vexillary, and this can be applied to balanced tableaux as well. Before moving to the statement and the proof of our results, let us make two important remarks.
\begin{enumerate}
\item The results presented here are implicit in \cite{FGR} in the case of balanced tableaux, and can be deduced with the same arguments as the ones used here.
\item Our approach gives a more direct proof that \eqref{EqSumCorner} holds for balanced tableaux, as required in \cite[Section~2]{EG}. However, our current proof does not lead to a satisfactory combinatorial proof of \ref{TheoEGBal} since it still uses Theorem~\ref{TheoStanVexil} as fundamental compound. Nevertheless, the concepts developed in the current article might be a first step toward such a proof. 
\end{enumerate}

We begin with introducing a useful notation.

\begin{definition}
Let $\sigma\in S_m$ be a vexillary permutation such that $\lambda(\sigma) \vdash n$ and $U=[z_1,\ldots,z_{k}]$ be a sequence of boxes of the Ferrers diagram of $\lambda(\sigma)$. We denote by $N_{\sigma,U}$ the set defined by
\[
N_{\sigma,U}:=\{T=(t_{\mathfrak{c}})_{\mathfrak{c}\in \lambda} \in \Tab(\T_{\sigma}^E) \ | \ t_{z_i}=i \ \text{for all} \ i\in [k]  \}.
\]
\end{definition}

\begin{Theorem}\label{PartialFil}
Let $\sigma\in S_m$ be a vexillary permutation such that $\lambda(\sigma) \vdash n$ and $U=[z_1,\ldots,z_{k}]$ be a sequence of boxes of the Ferrers diagram of $\lambda(\sigma)$.
Then, we have that either $N_{\sigma,U}$ is empty, or there exists $\omega\in S_m$ such that
\[
|N_{\sigma,U}|=|\Red(\omega)|.
\]
\end{Theorem}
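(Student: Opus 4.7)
The plan is to translate the partial-filling condition defining $N_{\sigma, U}$ through the chain of bijections $\Tab(\T_\sigma^E) \leftrightarrow \Tab(\T_\sigma) \leftrightarrow \Red(\sigma)$ already implicit in the paper, and to identify the resulting constraint on reduced decompositions of $\sigma$.

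First, by Proposition~\ref{LiEx}, each step of the line- and column-exchange algorithm is realized by a bijection $T \mapsto T\! \! \downarrow_a$ (resp.\ $T \mapsto \overrightarrow{T}^b$) that swaps two consecutive rows (resp.\ columns) of the underlying tableau. Composing all the swaps performed by the exchange algorithm applied to $\T_\sigma$ produces permutations $\pi$ of row indices and $\rho$ of column indices, together with a bijection $\Psi \colon \Tab(\T_\sigma^E) \to \Tab(\T_\sigma)$ which sends the entry located at position $(x,y)$ in $T$ to the position $(\pi^{-1}(x), \rho^{-1}(y))$ in $\Psi(T)$. Setting $z_i' := (\pi^{-1}(x_i), \rho^{-1}(y_i))$ where $z_i = (x_i, y_i)$, the restriction of $\Psi$ yields a bijection between $N_{\sigma, U}$ and
\[
N'_{\sigma, U} := \{ T \in \Tab(\T_\sigma) \mid t_{z_i'} = i \text{ for all } i \in [k]\}.
\]

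Next, I would invoke the bijection between $\Tab(\T_\sigma)$ and $\Red(\sigma)$ from Section~\ref{SubSecTypPerm}, under which a tableau of type $\T_\sigma$ encodes an ordering of the elements of $\Inv(\sigma)$ whose initial segments are themselves inversion sets of permutations in $S_m$. The condition $t_{z_i'} = i$ for $i \in [k]$ therefore asserts that the first $k$ inversions added are exactly $z_1', \ldots, z_k'$, in that order. Whenever $N_{\sigma, U}$ is non-empty this prescription is realizable, and it determines a saturated chain
\[
Id = \pi_0 \lhd_R \pi_1 \lhd_R \cdots \lhd_R \pi_k \leq_R \sigma
\]
in the right weak order with $\Inv(\pi_i) = \{z_1', \ldots, z_i'\}$. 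The remaining freedom in completing a tableau of $N'_{\sigma, U}$ corresponds, via the same bijection, to maximal chains in the interval $[\pi_k, \sigma]_R$. Setting $\omega := \pi_k^{-1}\sigma \in S_m$ (so that $\sigma = \pi_k \omega$ and $\ell(\sigma) = \ell(\pi_k) + \ell(\omega)$ since $\pi_k \leq_R \sigma$), such maximal chains are classically in bijection with $\Red(\omega)$, by reading off the right-multiplier at each cover relation. Combining these bijections gives $|N_{\sigma, U}| = |N'_{\sigma, U}| = |\Red(\omega)|$, which is the desired conclusion.

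The main technical obstacle is the first step: verifying cleanly that the exchange-algorithm bijection $\Psi$ acts as a simple permutation of rows and columns on the \emph{positions} of entries, so that any partial-filling condition is preserved up to the coordinate change $(x,y) \mapsto (\pi^{-1}(x),\rho^{-1}(y))$. Since each elementary exchange step only swaps two consecutive rows or columns of the tableau, this is a straightforward induction on the number of exchange steps; once it is available, the rest of the argument is a standard consequence of the structure of the right weak order on $S_m$ and of the FGR-style bijection between $\Tab(\T_\sigma)$ and $\Red(\sigma)$.
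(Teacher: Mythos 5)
Your proposal is correct and follows essentially the same route as the paper's proof: transport the partial-filling condition through the row/column-permutation bijection induced by the exchange algorithm, observe (via the filling-sequence description of $\Tab(\T_{\sigma})$, i.e.\ \cite[Theorem~4.1]{FV}) that the prescribed first $k$ boxes determine a permutation $\tau$ with $\Inv(\tau)=\Psi(\{z_1,\ldots,z_k\})$, and identify the remaining tableaux with maximal chains in the weak-order interval $[\tau,\sigma]_R$, hence with $\Red(\tau^{-1}\sigma)$. Your write-up is merely more explicit than the paper about why the exchange algorithm acts as a coordinate change on positions, which the paper compresses into ``by construction, there exists a bijection $\Psi$.''
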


\begin{proof}
Let us assume that $N_{\sigma,U}$ is not empty.
By construction, there exists a bijection $\Psi$ between the set of boxes of $\T_{\sigma}$ and the inversion set of $\sigma$ such that for any sequence $L=[x_1,\ldots,x_n]$, we have that $L $ is a filling sequence of $\T_{\sigma}^E$ if and only if 
$[\Psi(x_1),\Psi(x_2),\ldots,\Psi(x_n)] \ \text{is a filling sequence of} \ \T_{\sigma}.$
Consequently, thanks to \cite[Theorem~4.1]{FV}, $\Psi(\{z_1,\ldots,z_k\})$ is the inversion set of a permutation $\tau \in S_m$, and we have a one-to-one correspondence between $N_{\sigma,U}$ and the set of the maximal chains from $\tau$ to $\sigma$ in $(S_m,\leq_R)$. Thanks to \cite[Prop. 3.1.6, p. 69]{BB}, the set of maximal chains from $\tau$ to $\sigma$ in $(S_m,\leq_R)$ is in one-to-one correspondence with $\Red(\tau^{-1}\omega)$, and this concludes the proof.
\end{proof}

This theorem applies in particular to balanced tableaux. Moreover, this result is constructive: the permutation $\omega$ can be computed. We cannot provide a systematic description of the associate permutation, however we have the following combinatorial result.

\begin{Theorem}\label{NicePartial}
Let $\sigma\in S_m$ be a vexillary permutation such that $\lambda(\sigma) \vdash n$ and $U=[z_1,\ldots,z_{k}]$ be a sequence of boxes of the Ferrers diagram of $\lambda$. If there exists a partition $\mu\vdash (n-k)$ such that the resulting partition of the $XY$ and $YX$ processes applied on the diagram $\lambda(\sigma) \setminus \{z_1,\ldots,z_k\}$ is $\mu$, then $|N_{\sigma,U}|=f^{\mu}$. 
\end{Theorem}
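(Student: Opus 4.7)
The natural strategy is to combine Theorem~\ref{PartialFil} with Stanley's Theorem~\ref{TheoStanVexil}. Applying Theorem~\ref{PartialFil}, we obtain a permutation $\omega\in S_m$ with $|N_{\sigma,U}|=|\Red(\omega)|$; unwinding its proof, one takes $\omega=\tau^{-1}\sigma$, where $\tau$ is the permutation with $\Inv(\tau)=\Psi(\{z_1,\ldots,z_k\})$ and $\Psi$ is the bijection between the boxes of $\T_\sigma^E$ and those of $\T_\sigma$ induced by the line and column exchange algorithms. Since standard Young tableaux of conjugate shapes are equinumerous, it then suffices to show that $\omega$ is vexillary with $\lambda(\omega)=\mu'$: Theorem~\ref{TheoStanVexil} would yield $|\Red(\omega)|=f^{\lambda(\omega)}=f^{\mu'}=f^\mu$.

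The key observation is that the $YX$ process depends only on the multiset of row lengths of the input diagram, and similarly the $XY$ process depends only on the multiset of column lengths. I would therefore prove the technical claim that the row-length and column-length multisets of $\Inv(\omega)$ coincide with those of $\lambda(\sigma)\setminus U$. Granting this, the hypothesis forces both the $YX$ and $XY$ processes on $\Inv(\omega)$ to return $\mu$, whence $\mu(\omega)=\lambda(\omega)'=\mu$; by Proposition~\ref{PropStack}, $\omega$ is vexillary with $\lambda(\omega)=\mu'$, and the conclusion follows.

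To establish the multiset claim, I would factor it through the intermediate diagram $\Inv(\sigma)\setminus\Psi(U)$. The equality of multisets between $\Inv(\sigma)\setminus\Psi(U)$ and $\lambda(\sigma)\setminus U$ is handled by a routine induction on the sequence of elementary row and column swaps composing $\Psi$: each such swap preserves the row-length and column-length multisets of the ambient shape, as well as of any sub-diagram, provided the swap is simultaneously applied to that sub-diagram. The delicate half is the equality between $\Inv(\omega)=\Inv(\tau^{-1}\sigma)$ and $\Inv(\sigma)\setminus\Inv(\tau)$. These two sets have the same cardinality $\ell(\sigma)-\ell(\tau)$ and are related by the natural bijection $(i,j)\mapsto\{\tau(i),\tau(j)\}$, but this bijection manifestly does not preserve rows or columns, so this is where I expect the main obstacle to lie. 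I would attack it by induction on a maximal chain from $\tau$ to $\sigma$ in the weak order: at each elementary covering $\sigma'\lhd_R\sigma''$, a single inversion is added, and a direct case analysis shows that the row- and column-length multisets of $\Inv(\tau^{-1}\sigma'')$ and $\Inv(\sigma'')\setminus\Inv(\tau)$ change by the same elementary modification. An alternative, perhaps cleaner route would be to work entirely on the type side: perform the first $k$ steps of the filling process on $\T_\sigma^E$ to produce a type $\T'$ of shape $\lambda(\sigma)\setminus U$, and then use (a suitable form of) the reverse exchange algorithm together with \cite[Theorem~4.1]{FV} to identify $\T'$, up to $|\Tab(\cdot)|$, with $\T_{\omega'}^E$ for a vexillary $\omega'$ of the correct shape; here the obstacle migrates to controlling dethroned rows that may appear after removing $U$, but the resulting argument avoids any permutation arithmetic.
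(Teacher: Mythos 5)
Your strategy coincides with the paper's: reduce to showing that $\omega=\tau^{-1}\sigma$ is vexillary with $\mu(\omega)=\mu$ by comparing the row- and column-length multisets of $\Inv(\omega)$ with those of $\lambda(\sigma)\setminus\{z_1,\ldots,z_k\}$, observe that the $XY$ and $YX$ processes only see these multisets, and conclude via Proposition~\ref{PropStack}, Lemma~\ref{TempStack} and Theorem~\ref{TheoStanVexil}. Your factorization through the intermediate diagram $\Inv(\sigma)\setminus\Psi(U)$, with the swap-by-swap invariance argument for the exchange algorithms, is exactly what the paper's (very terse) proof implicitly relies on.

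The one step you leave unresolved --- and flag as the main obstacle --- is in fact immediate, so neither the induction along a maximal chain nor the type-side detour is needed. Since $\tau\leq_R\sigma$, i.e.\ $\Inv(\tau)\subseteq\Inv(\sigma)$, every pair $(c,d)\in\Inv(\sigma)\setminus\Inv(\tau)$ with $c<d$ satisfies $\tau^{-1}(c)<\tau^{-1}(d)$: otherwise $(c,d)$ would belong to $\Inv(\tau)$. Hence the bijection onto $\Inv(\tau^{-1}\sigma)$ is $(c,d)\mapsto(\tau^{-1}(c),\tau^{-1}(d))$ with no reordering of coordinates ever required (your unordered braces are unnecessary), and it carries the set of boxes with second coordinate $d$ bijectively onto the set of boxes with second coordinate $\tau^{-1}(d)$, and likewise for first coordinates. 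It therefore merely relabels rows by $d\mapsto\tau^{-1}(d)$ and columns by $c\mapsto\tau^{-1}(c)$, preserving both length multisets on the nose. With this observation inserted, your argument closes exactly as you outline: both stacking processes applied to $\Inv(\omega)$ return $\mu$, so $\omega$ is vexillary with $\mu(\omega)=\mu$ and $\lambda(\omega)=\mu'$, whence $|N_{\sigma,U}|=|\Red(\omega)|=f^{\mu'}=f^{\mu}$.
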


\begin{proof}
We keep the notations introduced in the proof of Theorem~\ref{PartialFil}. Let $\tau$ be the permutation whose inversion set is given by $\{ \Psi(x_1),\ldots,\Psi(x_k)\}$. Since $\omega = \tau^{-1} \sigma$ with $\tau \leq_R \sigma$, we have 
\[
\Inv(\omega)=\tau^{-1}(\Inv(\sigma) \setminus \Inv(\tau))=\tau^{-1}(\Sh(\T_{\sigma}) \setminus \{ \Psi(x_1),\ldots,\Psi(x_k)\}).
\]
Thus, if we denote by $l_i$ (resp. $c_i$) the number of boxes in row (resp. column) $i$ of $\lambda(\sigma) \setminus \{z_1,\ldots,z_k\}$, we have that the sequences $(l_i)_i$ and $(g_i(\omega))_i$ (resp. $(c_i)_i$ and $(r_i(\omega))_i$) are equal up to re-ordering. Therefore, $\omega$ is vexillary and $\mu(\omega)=\mu$. This concludes the proof.
\end{proof}

Notice that the results of the current section apply to balanced tableaux.

\begin{definition}
Let $\lambda=(\lambda_1,\ldots,\lambda_k)$ be a partition of $n$ and $(a,b),(c,d) \in \lambda$, we say that $(a,b)$ and $(c,d)$ are in the same \emph{block} if and only if $\lambda_a=\lambda_c$. 
Let $B$ be a block of $\lambda$ and let $i$ be the minimal integer such that $(i,\lambda_i) \in B$, then the box $(i,\lambda_i)$ is called the \emph{upper right corner} of $B$.
\end{definition}

Let $T$ be a balanced tableau of shape $\lambda \vdash n$. By definition, we have that the integer $1$ appears in the upper right corner of a block (see Figure~\ref{EndIllu}).
\begin{figure}[!h] 
\includegraphics[width=8.5 cm]{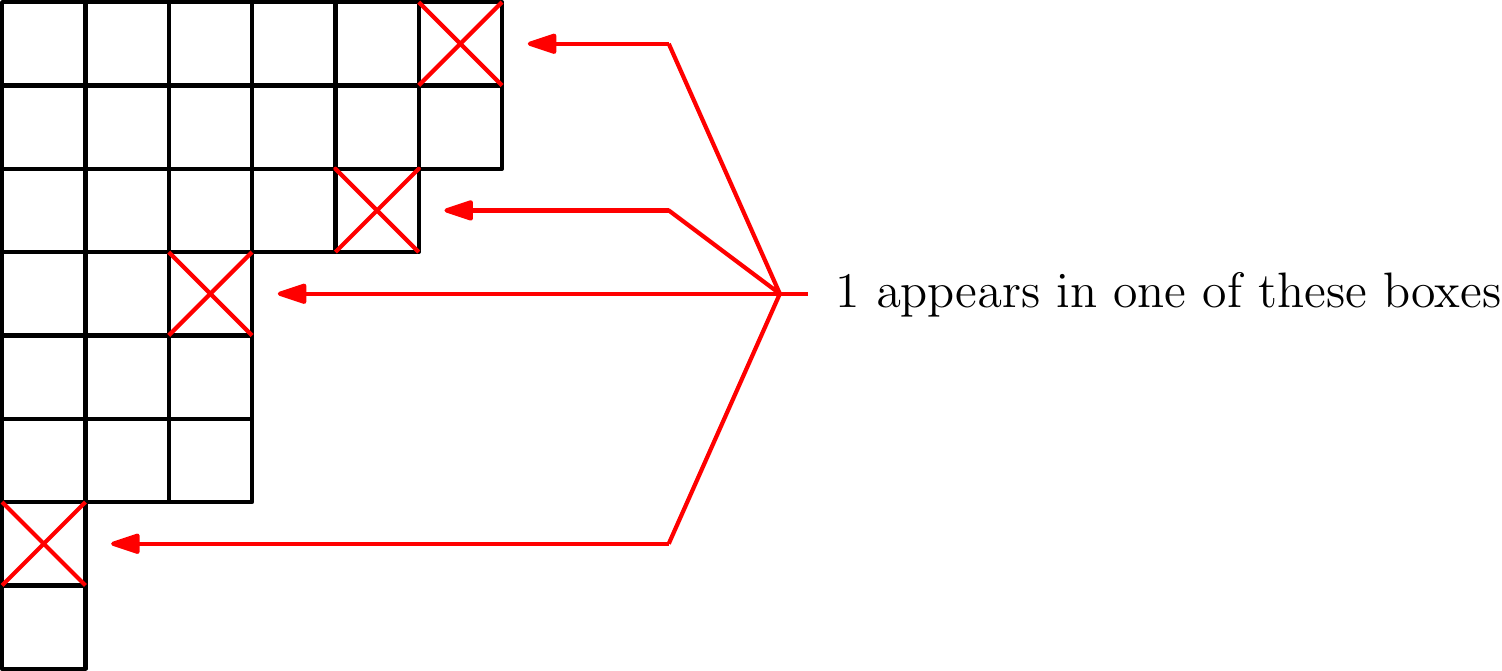}
\caption{Possible positions of the integer 1 in a ballanced tableau.}\label{EndIllu}
\end{figure}
Then, thanks to Theorem~\ref{NicePartial} we immediately have the following result.

\begin{proposition}
Let $B$ be a block of $\lambda \vdash n$ and $\mathfrak{c}$ be the upper right corner of $B$. Then, the number of balanced tableaux of shape $\lambda$ such that $1$ is in the box $\mathfrak{c}$ equals the number of standard tableaux of shape $\lambda^-$, where $\lambda^- \vdash n-1$ is obtained from $\lambda$ by suppressing the corner of the block $B$. 
\end{proposition}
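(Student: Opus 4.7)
The plan is to apply the natural analog of Theorem~\ref{NicePartial} for the line-exchange algorithm to the vexillary permutation $\sigma_\lambda$ from Section~\ref{SecBalanced}, which satisfies $\Bal(\lambda) = \Tab(\T_{\sigma_\lambda}^L)$ and $\lambda(\sigma_\lambda)' = \lambda$. The proofs of Theorem~\ref{PartialFil} and Theorem~\ref{NicePartial} go through verbatim with $\T_\sigma^L$ in place of $\T_\sigma^E$: the line-exchange algorithm yields a bijection between filling sequences of $\T_{\sigma_\lambda}^L$ and filling sequences of $\T_{\sigma_\lambda}$, and \cite[Theorem~4.1]{FV} still identifies, for any prescribed initial segment of boxes, its image as the inversion set of a permutation $\tau \leq_R \sigma_\lambda$. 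Setting $N^L_{\sigma_\lambda, U} := \{T \in \Bal(\lambda) \, : \, t_{z_i} = i \text{ for all } i\}$, the $L$-variant states that $|N^L_{\sigma_\lambda, U}| = f^\mu$ whenever the $XY$- and $YX$-processes applied to $\lambda \setminus \{z_1, \ldots, z_k\}$ yield the same partition $\mu$.

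I would then apply this with $U = [\mathfrak{c}]$, where $\mathfrak{c} = (i, \lambda_i)$ is the upper right corner of a block $B$ spanning rows $i, \ldots, i + b - 1$; the corner of $B$ is then $(i + b - 1, \lambda_i)$, and $\lambda^-$ denotes $\lambda$ with this corner removed. The key check is that both the $YX$- and $XY$-processes applied to $\lambda \setminus \{\mathfrak{c}\}$ produce $\lambda^-$. For the $YX$-process, all rows of $\lambda \setminus \{\mathfrak{c}\}$ remain left-justified (only row $i$ is shortened, at its right end), and the multiset of row lengths becomes that of $\lambda$ with one $\lambda_i$ replaced by $\lambda_i - 1$; re-sorting in decreasing order yields the row sequence of $\lambda^-$, in which the length-$(\lambda_i - 1)$ row sits just below block $B$. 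For the $XY$-process, the only affected column is $\lambda_i$, which now has a gap at row $i$ and remaining boxes in rows $i+1, \ldots, i+b-1$; top-justifying gives it length $i + b - 2$, every other column is unchanged, and re-sorting the column lengths in decreasing order yields the transpose of $\lambda^-$.

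Combining the two steps then gives $|N^L_{\sigma_\lambda, [\mathfrak{c}]}| = f^{\lambda^-} = |{\rm SYT}(\lambda^-)|$, which is exactly the proposition. The main obstacle is producing the $L$-variant of Theorem~\ref{NicePartial}; this truly is a verbatim re-reading of the argument with $\T_\sigma^L$ replacing $\T_\sigma^E$ and with the line-exchange bijection $\Psi_L$ replacing $\Psi$, but it does need to be done. The remaining two stacking computations are a direct unpacking of the definitions, slightly delicate only in tracking where the modified row of length $\lambda_i - 1$ and the shortened column of length $i + b - 2$ end up after the re-sorting.
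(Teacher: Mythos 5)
Your proposal is correct and follows essentially the same route as the paper, which likewise obtains this proposition as an immediate application of Theorem~\ref{NicePartial} with $U=[\mathfrak{c}]$, relying on its (unjustified but true) remark that the results of that section apply to balanced tableaux. Your explicit treatment of the $L$-variant versus the $E$-version, and your verification that both stacking processes applied to $\lambda\setminus\{\mathfrak{c}\}$ yield $\lambda^-$, merely fill in details the paper leaves implicit.
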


The previous proposition directly implies that for all partition $\lambda$,
\[
|\Bal(\lambda)|=\sum_{\lambda^-} |\Bal(\lambda^-)|,
\]
where the sum range on all partitions obtained from $\lambda$ by suppressing a corner.

\end{document}